\theoremstyle{definition}
\newtheorem*{Def}{Definition}
\newtheorem{Thm}{Theorem}[section]
\newtheorem{Prop}[Thm]{Proposition}
\newtheorem{Cor}[Thm]{Corollary}
\newtheorem{Lem}[Thm]{Lemma}
\newtheorem{Ex}[Thm]{Example}
\newtheorem{Rmk}[Thm]{Remark}
\newtheorem*{Main0}{Theorem}
\newtheorem*{Main1}{Theorem \ref{higher}}
\newtheorem*{Main1.5}{Theorem \ref{main}}
\newtheorem*{Main2}{Theorem \ref{classification_rank_one} and \ref{classification_rank_two}}
\newtheorem*{Main3}{Corollary \ref{finite}}
\newtheorem*{Main4}{Remark \ref{etale}}
\newcommand{\PP}{\mathbb{P}}
\newcommand{\CC}{\mathbb{C}}
\newcommand{\RR}{\mathbb{R}}
\newcommand{\ZZ}{\mathbb{Z}}
\newcommand{\Oo}{\mathcal{O}}
\newcommand{\Ii}{\mathcal{I}}
\newcommand{\SU}{\mathcal{SU}}
\renewcommand{\aa}{\mathfrak{a}}
\newcommand{\bb}{\mathfrak{b}}
\newcommand{\cc}{\mathfrak{c}}
\newcommand{\dd}{\mathfrak{d}}
\newcommand{\ee}{\mathfrak{e}}
\newcommand{\morph}{\gamma}
\newcommand{\Div}{\operatorname{Div}}
\newcommand{\Pic}{\operatorname{Pic}}
\newcommand{\NE}{\operatorname{NE}}
\newcommand{\rk}{\operatorname{rk}}
\newcommand{\Hom}{\operatorname{Hom}}
\newcommand{\im}{\operatorname{im}}
\newcommand{\len}{\operatorname{len}}
\newcommand{\elm}{\mathrm{elm}}
\newcommand{\Nm}{\mathrm{Nm}}
\newcommand{\Bl}{\mathrm{Bl}}
\renewcommand{\Pr}{\mathrm{Pr}}
\providecommand{\leadsfrom}{%
    \mathrel{\mathpalette\reflect@squig\relax}%
}
\newcommand{\reflect@squig}[2]{%
    \reflectbox{$\m@th#1\leadsto$}%
}
\title{Stability of Symmetric Powers of Vector Bundles\\ of Rank Two with Even Degree on a Curve}
\author{Jeong-Seop Kim}
\address{Department of Mathematical Sciences, KAIST, 291 Daehak-ro, Yuseong-gu, Daejeon 34141, Korea}
\email{jeongseop@kaist.ac.kr}
\begin{document}
\setlength\abovedisplayskip{3.5pt}
\setlength\belowdisplayskip{3.5pt}

\pagestyle{plain}

\maketitle
\vspace{-2em}
\begin{abstract}
This paper treats the strict semi-stability of the symmetric powers $S^k E$ of a stable vector bundle $E$ of rank $2$ with even degree on a smooth projective curve $C$ of genus $g\geq 2$.
The strict semi-stability of $S^2 E$ is equivalent to the orthogonality of $E$ or the existence of a bisection on the ruled surface $\PP_C(E)$ whose self-intersection number is zero.
A relation between the two interpretations is investigated in this paper through elementary transformations.
This paper also gives a classification of $E$ with strictly semi-stable $S^3 E$.
Moreover, it is shown that when $S^2 E$ is stable, every symmetric power $S^k E$ is stable for all but a finite number of $E$ in the moduli of stable vector bundles of rank $2$ with fixed determinant of even degree on $C$.
\end{abstract}

\section{Introduction}

All varieties are defined over the field of complex numbers $\CC$.
Let $C$ be a smooth projective curve and $E$ be a vector bundle on $C$ of rank $r=\rk E$ and degree $d=\deg E$.
Then $E$ is said to be \emph{stable} (resp. \emph{semi-stable}) if, for every nonzero proper subbundle $F$ of $E$ with torsion-free quotient $E/F$, the slope $\mu(F)$ of $F$ is less than (resp. less than or equal to) $\mu(E)$ where the \emph{slope} is defined by $\mu(F)={\deg F}/{\rk F}$ \cite[p.~87]{Friedman98}.
We may assume that the quotients are locally free since a torsion-free sheaf is locally free on $C$.\linebreak
A semi-stable vector bundle $E$ is called \emph{strictly semi-stable} if $E$ is not stable.

It is natural to investigate the stability of the symmetric power $S^kE$ of $E$ when $E$ is stable.
Using the correspondence between the stability of $E$ and the irreducibility of its associated unitary representation $\rho_E: \pi_1(C)\to U(r)$ identified by Narasimhan and Seshadri \cite{Narasimhan-Seshadri65}, it is possible to prove the following.

\begin{Main0}[{\cite[p.~53]{Hartshorne70}}]
Let $C$ be a smooth projective curve of genus $g\geq 2$ and $E$ be a stable vector bundle\linebreak on $C$.
Then
\begin{enumerate}
\item $S^k E$ is semi-stable for every $k\geq 2$ for all $E$, and
\item $S^k E$ is stable for every $k\geq 2$ for sufficiently general $E$.
\end{enumerate}
\end{Main0}

The next question would be a classification of stable $E$ for which $S^k E$ is strictly semi-stable.
Because we are interested in the case where $r=2$ and $d$ is even, we may assume that $\det E=\Oo_C$ after substituting $E$ by $E\otimes L^{-1}$ for some line bundle $L$ with $L^2=\det E$.
Then $E$ is self-dual as $E\cong E^\vee\otimes (\det E)\cong E^\vee$.
Note that $S^k(E\otimes L^{-1})=S^kE\otimes L^{-k}$ and the stability is invariant under the twist by a line bundle.
As $\det S^k E=(\det E)^{\binom{k+1}{2}}=\Oo_C$ and $\deg S^k E=0$, $S^k E$ is not stable if it has a proper nonzero subbundle $V\to S^k E$ of $\deg V=0$ with the quotient vector bundle $Q$ of $\deg Q=0$.
By taking the dual of the quotient\linebreak $S^k E\to Q$ together with the self-duality of $S^k E$ given by
\[
(S^k E)^\vee
\cong S^k E^\vee
\cong S^k E,
\]
we get a subbundle $Q^\vee\to S^k E$ of $\deg Q^\vee=-\deg Q=0$ and $\rk V+\rk Q^\vee =\rk V+\rk Q=\rk S^k E=k+1$.
Thus $S^k E$ has destabilizing subbundles $V$ and $Q^\vee$, one of which has rank less than or equal to $\frac{k+1}{2}$ (see also {\cite[Proposition 2.1]{Choe-Choi-Kim-Park18}}). 

\begin{Rmk}\label{first}
Let $E$ be a stable vector bundle on $C$ of rank $2$ with even degree.
Then $S^k E$ is strictly semi-stable if and only if it is destabilized by a subbundle $V\to S^k E$ of $\rk V\leq \frac{k+1}{2}$.
Similarly, it suffices to consider the quotient bundles $S^k E\to Q$ of $\rk Q\leq \frac{k+1}{2}$ to determine the stability of $S^k E$.
\end{Rmk}

By the remark and assuming the stability of lower symmetric powers, we have the following reduction.

\begin{Main1}
Let $C$ be a smooth projective curve of genus $g\geq 2$ and $E$ be a stable vector bundle on $C$\linebreak of rank $2$ with even degree.
Then $S^k E$ is stable for all $k\geq 2$ unless one of the following cases occurs.
\begin{enumerate}
\item $S^2 E$ is destabilized by a vector bundle of rank $1$
\item $S^3 E$ is destabilized by a vector bundle of rank $2$
\item $S^4 E$ is destabilized by a vector bundle of rank $2$
\item $S^6 E$ is destabilized by a vector bundle of rank $3$
\end{enumerate}
In particular, $S^k E$ is stable for every $k\geq 2$ if $S^m E$ is stable for all $m\leq 6$.
\end{Main1}

We will classify cases (1), (2) in Section 2, 3, and give a proof for the following result in Section 4.

\begin{Main1.5}
If $S^2 E$ is stable, then $S^k E$ is stable for every $k\geq 3$ except for finitely many $E$ in the moduli of stable vector bundles on $C$ of rank $2$ with fixed determinant of even degree.
\end{Main1.5}

For $k=2$, $S^2 E$ is strictly semi-stable if and only if it is destabilized by a quotient line bundle $S^2 E\to L$ of $\deg L=0$.
We will observe that this is equivalent to saying that $E$ is \emph{orthogonal} where\linebreak an orthogonal bundle is defined by a vector bundle $E$ with a nondegenerate symmetric bilinear form $E\otimes E\to L$ for some line bundle $L$.
According to Mumford's classification \cite{Mumford71}, the orthogonal bundles of rank $2$ are given by the direct images of line bundles on an unramified double covering $B\to C$.\linebreak
We will see that they form a positive dimensional family with fixed determinant.

On the other hand, Choi and Park \cite{Choi-Park15} show that there exists a stable $E$ whose associated ruled surface $\PP_C(E)$ admits a bisection of zero self-intersection using elementary transformations.
Then its symmetric square $S^2 E$ is strictly semi-stable due to the following correspondence.
\[
\left\{\text{$k$-sections $D$ on $\PP_C(E)$ with $D^2=2kb$}\right\}
~\leftrightarrow~
\left\{\text{line subbundles $L^{-1}\to S^k E$ with $\deg L=b$}\right\}
\]
In Section $2$, we will see a relation between orthogonal bundles and such ruled surfaces,
and find that every orthogonal bundle can be obtained by the method of Choi and Park \cite{Choi-Park15}.

For $k=3$, since $S^3 E$ is of rank $4$, if not stable, it is destabilized by a subbundle of rank $1$ or $2$.

\begin{Main2}
Let $C$ be a smooth projective curve of genus $g\geq 2$ and $E$ be a stable vector bundle on $C$ of rank $2$ with trivial determinant.
Then
\begin{enumerate}
\item  $S^3 E$ is destabilized by a subbundle of rank $1$ only if $S^2 E$ is strictly semi-stable, and there exist only a finite number of such $E$,
\item $S^3 E$ is destabilized by a subbundle of rank $2$ if $S^2 E$ is strictly semi-stable, and there are only finitely many such $E$ with stable $S^2 E$.
\end{enumerate}
In particular, except a finite number of $E$, $S^3 E$ is strictly semi-stable if and only if $S^2 E$ is not stable.
\end{Main2}

In Section $3$, we will classify the exceptional cases as the ones
 satisfying $S^2 E=\eta_*R$ for some unramified cyclic $3$-covering $\eta: C'\to C$ and $R\in J^0(C')\backslash \eta^*J^0(C)$ with $R^2=\Oo_{C'}$.
It also completes the description of $E$ with $S^4 E$ being destabilized by a line subbundle (see Proposition \ref{tosmall}).

For $k\geq 4$, the remaining cases for the stability of $S^k E$ are $k=4$ and $6$ as stated in Theorem \ref{higher}.
In Section 4, we will show that each case is further reduced to the case where $S^l E$ is destabilized by a line subbundle for some $l\geq k$.
Then, with the aid of the following corollary, we obtain the result that there are\linebreak only finitely many $E$ with trivial determinant where $S^2 E$ is stable but $S^k E$ is not stable for some $k\geq 3$.

\begin{Main3}
Let $C$ be a smooth projective curve of genus $g\geq 2$ and $E$ be a stable vector bundle on $C$\linebreak of rank $2$ with trivial determinant.
If $k\geq 3$, then there exist at most finitely many $E\in\SU_C(2,\Oo_C)$ where\linebreak $S^k E$ is destabilized by a line subbundle but $S^m E$ is not destabilized by a line subbundle for any $m<k$.
\end{Main3}

Under the correspondence between the line subbundles of $S^k E$ and the $k$-sections on $X=\PP_C(E)$, the result says that if $S^2 E$ is stable and $E$ has even degree, then there are only a finite number of $X$ which admits a $k$-section of zero self-intersection for some $k\geq 3$.
Notice that the class of $k$-secant divisors of zero self-intersection lies on the boundary of the closure of the cone of curves $\overline{\NE}(X)$ in $N_1(X)$ when $E$ is stable \cite[I:~p.~70]{Lazarsfeld04}.
Thus the result also answers to the question of how many $X=\PP_C(E)$ has closed $\NE(X)$ for the stable vector bundles $E$ of rank $2$ with even degree when $S^2 E$ is stable.

Due to an observation by Rosoff \cite[p.~123]{Rosoff02}, when $D$ is the $k$-section on $X=\PP_C(E)$ corresponding to a destabilizing line subbundle of $S^k E$, the induced $k$-covering $\pi: D\to C$ is necessarily unramified.
Also, the covering gives a destabilizing quotient line bundle $\pi^* E\to R$ so that $\pi^*E$ is not stable on $D$.\linebreak
Further, we can derive a stronger assertion that $\pi^*E$ is not only strictly semi-stable but it also splits into the direct sum of line bundles, and the line bundles are torsion elements in the Picard group when $k\geq 3$.
By composing a cyclic covering over which trivializes the torsion line bundles, we can conclude that $E$ is trivialized over an unramified finite covering of $C$.
Namely, $E$ satisfies the property called \emph{\'{e}tale triviality} \cite{Biswas20}, which is known to be equivalent to saying that $E$ is a \emph{finite} bundle introduced by Nori \cite{Nori76}.

\begin{Main4}
Let $E$ be a vector bundle on $C$ of rank $2$ with trivial determinant.
Assume that $S^2 E$ is stable.
Then $E$ is finite if and only if $S^k E$ is not stable for some $k\geq 3$.
\end{Main4}

\noindent
{\bf Acknowledgement.}
I would like to thank my thesis advisor, Prof. Yongnam Lee, for introducing this topic and giving valuable guidance.
I am also indebted to an anonymous reviewer of an earlier manuscript\linebreak not only for providing helpful suggestions to improve the presentation but also for pointing out a flaw in the previous proof of Theorem \ref{higher}.
This work will be part of my Ph.D. thesis.
I was partly supported by Samsung Science and Technology Foundation under Project Number SSTF-BA1701-04.
\vspace{1em}

\section{Semi-Stable Vector Bundles Whose Symmetric Square is Not Stable} 

\subsection{Unramified Finite Coverings and Prym Varieties}
As mentioned in the introduction, the strict semi-stability of $S^k E$ has a lot to do with the unramified $k$-coverings of $C$.
In this subsection, we review the theory of unramified finite coverings.
The references are \cite[Exercise III.10.3 \& Exercise IV.2.6]{Hartshorne77} and \cite[Chapter 12]{Birkenhake-Lange04}.

Let $C$ be a smooth projective curve.
We will denote by
\begin{itemize}
\item $\Pic(C)$ the Picard group of $C$,
\item $J^n(C)$ the Jacobian of line bundles on $C$ of degree $n$, and
\item $J_m(C)$ the set of line bundles on $C$ of order $m$; the elements $L\in J^0(C)$ with $L^m=\Oo_C$.
\end{itemize}
We will also abuse notation to take a divisor $\bb$ from the group of divisors as $\bb\in\Pic(C)$ instead of $\Div(C)$.\pagebreak

\newgeometry{top=90pt,bottom=80pt,left=80pt,right=80pt,footskip=30pt}
Let $D$ be an unramified $k$-covering of $C$, that is, there is a finite surjective morphism $\pi: D\to C$ of degree $k$ whose ramification divisor is empty.
Then $\pi^*:\Pic(C)\to \Pic(D)$ induces $\pi^*:J^n(C)\to J^{kn}(D)$ since
$\deg\pi^*L=k\deg L$ for $L\in \Pic(C)$.
As $\pi$ is unramified, we have $\omega_D=\pi^*\omega_C$.
That is, $\omega_{D/C}=\Oo_D$.
Moreover, $\pi^*$ has nontrivial kernel; $\pi^*L=\Oo_D$ for some $L\in J^0(C)\backslash\{\Oo_C\}$ if and only if $\pi$ is factored as $\pi: D\to C'\xrightarrow{\eta}C$ for some unramified cyclic covering $\eta:C'\to C$ satisfying $\eta^*L=\Oo_{C'}$ \cite[Proposition 11.4.3]{Birkenhake-Lange04}.
Recall that a torsion line bundle $L\in J_m(C)$ defines an unramified cyclic $m$-covering $\eta:C'\to C$ and vice versa under the relations $\eta^*L=\Oo_{C'}$ and $\eta_*\Oo_{C'}=\Oo_C\oplus L^{-1}\oplus \cdots\oplus L^{-(m-1)}$.

To a finite covering $\pi:D\to C$ of degree $k$, we associate the \emph{Norm map} $\Nm_{D/C}: \Pic(D)\to \Pic(C)$ which is defined by
\[
\Nm_{D/C}\left(\sum a_ix_i\right)=\sum a_i\pi(x_i)
\]
in terms of Weil divisors.
Then $\Nm_{D/C}$ is a group homomorphism and $\Nm_{D/C}\circ\pi^*: \Pic(C)\to \Pic(C)$ is multiplication by $k$.
For $R\in \Pic(D)$, $\pi_*R$ is a vector bundle of rank $k$ whose determinant is given by
\[
\det \pi_*R\cong \det \pi_*\Oo_{D}\otimes \Nm_{D/C}(R).
\]
Notice that $(\det \pi_*\Oo_D)^2=\Oo_C(-B)$ for the branch divisor $B$ of $\pi: D\to C$.
Thus, if $\pi$ is unramified, then $(\det \pi_*\Oo_D)^2=\Oo_C$.

If $V$ is a vector bundle on $C$, then $\pi^*V$ is semi-stable if and only if $V$ is semi-stable \cite[II: Lemma 6.4.12]{Lazarsfeld04}.
Meanwhile, if $W$ is a vector bundle on $D$, then it is known that $\pi_*W$ is stable for general $W$ when 
$\pi$ is unramified \cite{Beauville00}.
The following proposition tells that $\pi_*W$ is semi-stable and which $\pi_*W$ is stable in the case where $\deg \pi=2$ and $\rk W=1$.
Notice that any unramified double covering is a cyclic covering.

\begin{Prop}\label{img}
Let $\pi: B\to C$ be a nontrivial unramified double covering corresponding to $M\in J_2(C)$ and $R\in J^0(B)$.
Then $E=\pi_*R$ is semi-stable and is strictly semi-stable if and only if $R\in \pi^*J^0(C)$.
Moreover, $E$ splits as $E=L\oplus (L\otimes M)$ for some $L\in J^0(C)$ when it is strictly semi-stable.
\end{Prop}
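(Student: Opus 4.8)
The plan is to reduce every assertion to a one-line computation on $B$ via the adjunction between $\pi^*$ and $\pi_*$, using that $E=\pi_*R$ has rank $2$ and slope $0$. First I would pin down the numerical data. Since $\pi$ is finite, $\pi_*$ is exact and $\chi(\pi_*R)=\chi(R)$; combined with $g_B=2g_C-1$ for an unramified double cover, Riemann--Roch forces $\deg E=0$, so $\mu(E)=0$. (Alternatively one may invoke the formula $\det\pi_*R\cong\det\pi_*\Oo_B\otimes\Nm_{B/C}(R)$ recorded above together with $\deg\Nm_{B/C}(R)=\deg R=0$ and $(\det\pi_*\Oo_B)^2=\Oo_C$.)

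For semi-stability, because $E$ has rank $2$ it suffices to bound the degree of an arbitrary line subbundle $N\hookrightarrow E=\pi_*R$. By the adjunction $\Hom_C(N,\pi_*R)\cong\Hom_B(\pi^*N,R)$, such an inclusion corresponds to a nonzero morphism $\pi^*N\to R$ of line bundles on $B$. On a smooth curve a nonzero map of line bundles is injective, so $\deg\pi^*N\leq\deg R$, that is, $2\deg N\leq 0$. Hence $\deg N\leq 0=\mu(E)$, and $E$ is semi-stable.

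The characterization of strict semi-stability then comes from tracking the equality case. $E$ fails to be stable exactly when some line subbundle attains $\deg N=0$; under the adjunction the induced nonzero map $\pi^*N\to R$ is a morphism of line bundles of equal degree $0$ on $B$, hence an isomorphism $\pi^*N\cong R$. Thus $E$ is strictly semi-stable if and only if $R\in\pi^*J^0(C)$. For such $R=\pi^*L$ the splitting is immediate from the projection formula:
\[
E=\pi_*\pi^*L\cong L\otimes\pi_*\Oo_B\cong L\otimes(\Oo_C\oplus M)=L\oplus(L\otimes M),
\]
where $\pi_*\Oo_B=\Oo_C\oplus M$ for the nontrivial unramified double cover defined by $M\in J_2(C)$.

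I do not anticipate a serious obstacle here: the whole argument is a packaging of adjunction together with a degree count, and the rank-$2$ hypothesis means line subbundles are the only destabilizers to check. The only points that will need care are the precise identity $\pi_*\Oo_B=\Oo_C\oplus M$ (which uses that $B$ is connected, guaranteed by the nontriviality of the covering) and the passage from a nonzero map of line bundles to an honest injection, which is routine on a smooth curve.
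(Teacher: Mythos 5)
Your proof is correct and takes essentially the same route as the paper's: both deduce $\deg E=0$ from the norm/determinant formula, use the adjunction $\Hom_C(N,\pi_*R)\cong\Hom_B(\pi^*N,R)$ with a degree count on $B$ to get semi-stability and the equality case, and obtain the splitting from the projection formula with $\pi_*\Oo_B=\Oo_C\oplus M$. The only cosmetic difference is that the paper additionally records the exact sequence $0\to R^{-1}\otimes\pi^*\det E\to \pi^*E\to R\to 0$, which is not needed for this statement but is cited in later arguments.
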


\begin{proof}
Note that the natural morphism $\pi^*\pi_*R\to R$ becomes surjective because $\pi$ is an affine morphism.
Since the kernel of a surjection between vector bundles is a vector bundle, the sequence 
\[
0\to K\to \pi^* E\to R\to 0
\]
is exact for some vector bundle $K$ on $B$ where $\rk K=1$ as $\rk \pi^*\pi_*R=\rk \pi_*R=2$ and $\rk R=1$ in this case.
By comparing the determinants using $\det\pi^*E=\pi^*\det E$, the exact sequence becomes
\[
0\to R^{-1}\otimes\pi^*\det E\to \pi^*E\to R\to 0.
\]
Then, from
\begin{align*}
\deg E
=\deg (\det\pi_*R)
=\deg (\det\pi_*\Oo_B \otimes \Nm_{D/C}(R))
=\deg (\det(\Oo_C\oplus M)\otimes \Nm_{D/C}(R))
=0,
\end{align*}
we have $\deg \pi^*E=2\deg E=0$ and $\deg (R^{-1}\otimes \pi^*\det E)=\deg \pi^*E-\deg R=0$.

If there is an injection $L\to E$ for some line bundle $L$ of degree $d$, then we get a nonzero morphism $\pi^*L\to R$ due to the adjoint property, $\Hom(\pi^*L,R)=\Hom(L,\pi_*R)=\Hom(L,E)$.
As $\deg\pi^*L=2d$, $d\leq 0$ and the equality holds if and only if $R=\pi^*L$.
Therefore, $E$ is semi-stable, and $E$ is strictly semi-stable if and only if $R\in \pi^*J^0(C)$.

Now if $E$ is strictly semi-stable so that $R=\pi^*L$ for some $L\in J^0(C)$, then, by the projection formula,
\[
\pi_*R
=\pi_*\pi^*L
=(\pi_*\Oo_B)\otimes L
=(\Oo_C\oplus M)\otimes L=L\oplus(L\otimes M).\qedhere
\]
\end{proof}
\restoregeometry

In the case where $\pi: B\to C$ is a nontrivial unramified double covering, we denote the kernel of $\Nm_{B/C}$ by $\Pr(B/C)$ in this paper.
Then $\Pr(B/C)\subseteq J^0(C)$ and it has two components as
\[
\Pr(B/C)
=\{S\otimes (\iota^*S)^{-1}\,|\,S\in \Pic(B)\}
=\{S\otimes (\iota^*S)^{-1}\,|\,S\in J^0(B)\}\cup\{S\otimes (\iota^*S)^{-1}\,|\,S\in J^1(B)\}
\]
where $\iota:B\to B$ is the involution induced by $\pi$ \cite[Lemma 1]{Mumford71}.
We denote the first summand by $\Pr^0(B/C)$ and the second one by $\Pr^1(B/C)$.
$\Pr^0(B/C)$ is known as the \emph{Prym variety} of $B$ over $C$ which is an abelian subvariety of $J^0(B)$ of dimension $g(B)-g(C)=g-1$, and $\Pr^1(B/C)$ is a translate of $\Pr^0(C)$ in $J^0(B)$.\linebreak
It is also known that $J^0(B)=\Pr^0(B/C)\otimes \pi^*J^0(C)=\{R\otimes \pi^*L\,|\,R\in\Pr^0(B/C),\,L\in J^0(C)\}$, and we can describe the intersection $\Pr^0(B/C)\cap \pi^*J^0(C)=\Pr^0(B/C)\cap J_2(B)$ using the following proposition.

\begin{Prop}\label{intersection}
Let $\pi: B\to C$ be a nontrivial unramified double covering.
Then
\[
\Pr(B/C)\cap \pi^*J^0(C)=\pi^*J_2(C)=\Pr(B/C)\cap J_2(B).
\]
\end{Prop}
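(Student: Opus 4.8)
The plan is to reduce the whole statement to two formal identities relating the norm map $\Nm_{B/C}$, the pullback $\pi^*$, and the involution $\iota$, and then to isolate the single genuinely geometric input: that an $\iota$-invariant line bundle on $B$ descends to $C$. The two ingredients I would record first are already available in the preceding discussion: the composite $\Nm_{B/C}\circ\pi^*$ is multiplication by $2$ on $\Pic(C)$, and every element of $\Pr(B/C)$ has the shape $S\otimes(\iota^*S)^{-1}$, so that $\iota^*R=R^{-1}$ for all $R\in\Pr(B/C)$.

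With these in hand the first equality $\Pr(B/C)\cap\pi^*J^0(C)=\pi^*J_2(C)$ is immediate. For $L\in J^0(C)$ the bundle $\pi^*L$ lies in $\Pr(B/C)=\ker\Nm_{B/C}$ exactly when $\Oo_C=\Nm_{B/C}(\pi^*L)=L^2$, i.e. when $L\in J_2(C)$; conversely $\pi^*J_2(C)$ visibly lies in both $\pi^*J^0(C)$ and $\ker\Nm_{B/C}$. The same computation, together with $(\pi^*L)^2=\pi^*(L^2)=\Oo_B$, gives the easy inclusion $\pi^*J_2(C)\subseteq\Pr(B/C)\cap J_2(B)$ of the second equality.

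The reverse inclusion is the heart of the matter. Given $R\in\Pr(B/C)\cap J_2(B)$, I would combine $\iota^*R=R^{-1}$ (from the Prym description) with $R^{-1}=R$ (from $R^2=\Oo_B$) to conclude $\iota^*R\cong R$. Since $\pi$ is unramified, $\iota$ acts freely with $C=B/\langle\iota\rangle$, so an $\iota$-invariant line bundle descends: a chosen isomorphism $\iota^*R\to R$ can be rescaled so as to satisfy the $\ZZ/2$-cocycle condition, since $\CC^*$ is divisible and the obstruction lives in $H^2(\ZZ/2,\CC^*)=0$; equivalently, $\operatorname{im}(\pi^*)$ is exactly the $\iota$-invariant part of $\Pic(B)$. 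This yields $R\cong\pi^*L$ for some $L\in\Pic(C)$. A degree count $0=\deg R=2\deg L$ forces $L\in J^0(C)$, and then $\Oo_C=\Nm_{B/C}(R)=L^2$ forces $L\in J_2(C)$, so $R\in\pi^*J_2(C)$, completing the chain of equalities.

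The main obstacle is precisely this descent step, namely upgrading $\iota^*R\cong R$ to an actual presentation $R=\pi^*L$. This is the only place the argument uses more than the formal behaviour of $\Nm_{B/C}$ and $\pi^*$, and it is exactly here that the hypothesis that $\pi$ is étale (so that the involution is free) is indispensable; everything else is bookkeeping with degrees and the two identities above.
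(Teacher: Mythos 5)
Your proof is correct, and it establishes the nontrivial inclusion $\Pr(B/C)\cap J_2(B)\subseteq \pi^*J_2(C)$ by a genuinely different mechanism from the paper's. The paper's proof is a counting argument: having identified $\Pr(B/C)\cap\pi^*J^0(C)=\pi^*J_2(C)$, of order $2^{2g-1}$ (from $|J_2(C)|=2^{2g}$ and $|\ker\pi^*|=2$), it shows $|\Pr(B/C)\cap J_2(B)|=2^{2g-1}$ as well --- namely $2^{2(g-1)}$ points in the component $\Pr^0(B/C)$, because that is an abelian variety of dimension $g-1$, and another $2^{2(g-1)}$ in $\Pr^1(B/C)$, obtained by translating by some $\pi^*L_1\in\pi^*J_2(C)$ lying in $\Pr^1(B/C)$ --- so the easy inclusion $\pi^*J_2(C)\subseteq\Pr(B/C)\cap J_2(B)$ is an inclusion of finite sets of equal cardinality, hence an equality. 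You instead argue element-wise: $R\in\Pr(B/C)$ gives $\iota^*R\cong R^{-1}$, $R\in J_2(B)$ gives $R\cong R^{-1}$, hence $R$ is $\iota$-invariant and descends along the free quotient (the linearization obstruction in $H^2(\ZZ/2,\CC^*)$ vanishes, $\CC^*$ being divisible), after which degree and norm bookkeeping force $R\in\pi^*J_2(C)$. Both arguments are complete; your descent step is the standard identification $\im\pi^*=\Pic(B)^{\iota}$ for an \'{e}tale $\ZZ/2$-quotient, and it does use that $B$ is connected (true here because the covering is nontrivial) so that $\operatorname{Aut}(R)=\CC^*$ and the rescaling makes sense --- worth saying explicitly in a final write-up. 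What each approach buys: yours is more elementary and self-contained, needing neither the dimension of the Prym variety nor its two-component structure, and it isolates exactly where \'{e}tale-ness enters; the paper's counting yields strictly more information as a by-product, namely the cardinality $|\Pr(B/C)\cap J_2(B)|=2^{2g-1}$ and the even split of the $2$-torsion between $\Pr^0(B/C)$ and $\Pr^1(B/C)$, although the later uses of the proposition (the finiteness in Proposition \ref{dim}, the descent to $\sigma^*J^0(C')$ in Theorem \ref{classification_rank_two}) only require the set-theoretic equality that both proofs deliver.
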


\begin{proof}
As $\Nm_{B/C}(\pi^*L)=L^2$ for $L\in \Pic(C)$, $\pi^*L\in\Pr(B/C)$ if and only if $L\in J_2(C)$.
Thus we get $\Pr(B/C)\cap\pi^*J^0(C)=\pi^*J_2(C)$, and its order is $2^{2g-1}$ because $|J_2(C)|=2^{2g}$ and $|\ker\pi^*|=2$ for the genus $g$ of $C$.
From $\pi^*J_2(C)\subseteq J_2(B)$, we have $\pi^*J_2(C)=\Pr(B/C)\cap\pi^*J_2(C)\subseteq \Pr(B/C)\cap J_2(B)$ and the inclusion becomes equality after calculating the order $|\Pr(B/C)\cap J_2(B)|=2^{2g-1}$.

Since $\Pr^0(B/C)$ is an abelian subvariety of $J^0(B)$ of dimension $g-1$, $|\Pr^0(B/C)\cap J_2(B)|=2^{2(g-1)}$.
So there exists $L_1\in J_2(C)$ with $\pi^*L_1\in \Pr^1(B/C)$ but $\pi^*L_1\not\in\Pr^0(B/C)$.
Then, using the translation $\Pr^1(B/C)=\Pr^0(B/C)\otimes \pi^*L_1$, we can deduce that $|\Pr^1(B/C)\cap J_2(B)|=2^{2(g-1)}$.
Hence we obtain $|\Pr(B/C)\cap J_2(B)|=2^{2g-1}$ and the equality $\pi^*J_2(C)=\Pr(B/C)\cap J_2(B)$.
\end{proof}

\subsection{Classification of Orthogonal Bundles}
The orthogonal bundles are studied by several authors;
Ramanathan \cite{Ramanathan96},
Mumford \cite{Mumford71},
Ramanan \cite{Ramanan81}, Hitching \cite{Hitching07}, and Biswas-G\'{o}mez \cite{Biswas-Gomez10} for instance.\linebreak
In this paper, we use the following definition presented in Hitching \cite{Hitching07}, which is also similar to that given in Biswas-G\'{o}mez \cite{Biswas-Gomez10}.

\begin{Def}
Let $E$ be a vector bundle and $M$ be a line bundle on $C$.
Then $E$ is said to be \emph{orthogonal with values in $M$} if there is a nondegenerate symmetric bilinear form $E\otimes E\to M$.
\end{Def}

Let $E$ be a vector bundle on $C$ of rank $2$ and degree $0$.
If $S^2 E$ is strictly semi-stable, then there is a quotient line bundle $S^2 E\to M$ of $\deg M=0$, which gives a nonzero morphism $E\to E^\vee\otimes M$ from
\[
\Hom(S^2 E, M)
\subseteq \Hom(E\otimes E,M)
\cong H^0(E^\vee\otimes E^\vee\otimes M)
\cong \Hom(E,E^\vee\otimes M).
\]
If $E$ is stable, then the morphism is necessarily an isomorphism, and the induced symmetric bilinear form $E\otimes E\to M$ is nondegenerate on each fiber.
Hence $E$ admits an orthogonal structure.

Conversely, if $E$ is an orthogonal bundle with values in $M$, then it associates a morphism $S^2 E\to M$ which must be surjective because the form $E\otimes E\to M$ is nondegenerate.

\begin{Rmk}\label{twotor}
Let $E$ be a stable vector bundle on $C$ of rank $2$ and degree $0$.
Then $E$ is orthogonal if and only if $S^2 E$ is strictly semi-stable.
If $S^2 E$ is destabilized by a quotient line bundle $S^2 E\to M$, then $E$ is orthogonal with values in $M$.
Also, by comparing the determinants in the isomorphism $E\cong E^\vee\otimes M$, we have $M^2\cong (\det E)^2$.
On the other hand, if $S^2 E$ is destabilized by a line subbundle $M^{-1}\to S^2 E$, then $M^2=(\det E)^{-2}$ follows from the isomorphism $E^\vee\cong E\otimes M$. In particular, if $E$ is orthogonal with values in $M$ and $E$ has trivial determinant, then $M\in J_2(C)$.
\end{Rmk}

\newgeometry{top=75pt,bottom=80pt,left=80pt,right=80pt,footskip=30pt}
By Mumford's classification \cite{Mumford71}, if $E$ is an orthogonal bundle of rank $2$ with values in $\Oo_C$, then
\begin{enumerate}
\item[(1)] $E=L^{-1}\oplus L$ for some line bundle $L$, or
\item[(2)] $E=\pi_*R$ where $\pi: B\to C$ is an unramified double covering and $R$ is a line bundle on $B$ such that $\Nm_{B/C}(R)=\Oo_C$.
\end{enumerate}
In (2), if $\pi$ is the trivial double covering, then $E=\pi_*R$ is the direct sum of line bundles of degree~$0$, and it has trivial determinant as $\det \pi_*\Oo_B=\det(\Oo_C\oplus \Oo_C)=\Oo_C$.
So $E=L^{-1}\oplus L$ for some $L\in J^0(C)$.
This case is covered by (1).
Otherwise, if $\pi$ is a nontrivial double covering corresponding to $M\in J_2(C)\backslash\{\Oo_C\}$, then we know from Proposition~\ref{img} that $E=\pi_*R$ is semi-stable, and it is strictly semi-stable if and only if $E=L\oplus (L\otimes M)$ for some $L\in J^0(C)$.
In this case, $L$ must satisfy $L^2=\Oo_C$ as $\det E=M$.

\begin{Rmk}\label{sss}
If $E$ is strictly semi-stable, then $E$ is orthogonal with values in $\Oo_C$ if and only if
\begin{itemize}
\item $E=L^{-1}\oplus L$ if the determinant of $E$ is trivial,
\item $E=L\oplus (L\otimes M)$ for some $L\in J_2(C)$ if $M=\det E$ is nontrivial.
\end{itemize}
In particular, there is no stable $E$ with trivial determinant whose orthogonal form takes its values in $\Oo_C$.

If $E$ is strictly semi-stable, then $S^2 E$ is always strictly semi-stable because a surjection $E\to L$ induces a surjection $S^2 E\to L^2$.
In the same way, if $E$ is not stable, then $S^k E$ is not stable for any $k\geq 2$.
\end{Rmk}

We denote by $\SU_C(2,\Oo_C)$ the space of S-equivalence classes of semi-stable vector bundles on $C$ of\linebreak rank $2$ with trivial determinant.
After choices of a nontrivial unramified double covering $\pi:\,B\to C$ corresponding to $M\in J_2(C)\backslash\{\Oo_C\}$ and a line bundle $A$ satisfying $A^2=M$, we can define a map $\Phi_{A}:\,\Pr(B/C)\to \SU_C(2,\Oo_C);\,R\mapsto \pi_*R\otimes A$ as $\pi_*R$ is semi-stable by Proposition \ref{img}.
For fixed $M$,\linebreak the images of $\Phi_A$ are the same under the changes of $A$ because the choices differ by a twist of $L\in J_2(C)$ and $\Pr(B/C)$ is invariant under the translation by $\pi^*L$ if $L\in J_2(C)$.

\begin{Prop}\label{dim}
If $E\in \SU_C(2,\Oo_C)$ is stable and $S^2 E$ is strictly semi-stable, then $E=\Phi_{A}(R)=\pi_*R\otimes A=\pi_*(R\otimes \pi^*A)$ for some $\pi: B\to C$, $A\in J^0(C)$, and $R\in\Pr(B/C)$ as above.
Moreover, the locus of $E\in \SU_C(2,\Oo_C)$ where $E$ is stable and $S^2 E$ is strictly semi-stable has dimension $g-1$.
\end{Prop}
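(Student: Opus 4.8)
The plan is to combine Mumford's classification (restated above for orthogonal bundles with values in $\Oo_C$) with Propositions \ref{img} and \ref{intersection}, reducing the general orthogonal structure to values in $\Oo_C$ by an auxiliary twist, and then to read off the dimension from the Prym varieties $\Pr^0(B/C)$ and $\Pr^1(B/C)$. The first assertion is essentially an unwinding of the preceding discussion; the content of the statement is the dimension count.

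For the first assertion I would start from a stable $E\in\SU_C(2,\Oo_C)$ with $S^2E$ strictly semi-stable. By Remark \ref{twotor}, $E$ is orthogonal with values in some line bundle $M$, and since $\det E=\Oo_C$ we get $M^2=\Oo_C$, i.e.\ $M\in J_2(C)$. As $J^0(C)$ is divisible I may fix $A\in J^0(C)$ with $A^2=M$; twisting the nondegenerate symmetric form $E\otimes E\to M$ shows that $E':=E\otimes A^{-1}$ is orthogonal with values in $\Oo_C$. Applying Mumford's classification to $E'$, case (1) and the trivial-covering subcase of (2) would force $E'$, hence $E$, to split, which is impossible since $E$ is stable (this is exactly the content of Remark \ref{sss}). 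Thus $E'=\pi_*R$ for a nontrivial unramified double covering $\pi:B\to C$ and $R$ with $\Nm_{B/C}(R)=\Oo_C$, i.e.\ $R\in\Pr(B/C)$. Comparing determinants via $\det E'=\det\pi_*\Oo_B\otimes\Nm_{B/C}(R)$ shows $\pi$ is the covering attached to $M=\det E'$, and Proposition \ref{img} forces $R\notin\pi^*J^0(C)$ because $E'$ is stable. The projection formula then gives $E=\pi_*R\otimes A=\pi_*(R\otimes\pi^*A)=\Phi_A(R)$, as claimed.

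For the dimension statement, the above shows the locus in question equals the finite union, over the $2^{2g}-1$ nontrivial classes $M\in J_2(C)$, of the images $\Phi_A\bigl(\{R\in\Pr(B/C): R\notin\pi^*J^0(C)\}\bigr)$; conversely every such $E=\Phi_A(R)$ is stable by Proposition \ref{img} and, being orthogonal with values in $M=A^2$ (twist the values-$\Oo_C$ form on $\pi_*R$ supplied by Mumford's classification), has $S^2E$ strictly semi-stable by Remark \ref{twotor}. The set $\{R\in\Pr(B/C): R\notin\pi^*J^0(C)\}$ is $\Pr(B/C)$ with the finite subset $\pi^*J_2(C)=\Pr(B/C)\cap J_2(B)$ (Proposition \ref{intersection}) removed, hence is open dense of dimension $g-1$, since each component $\Pr^0(B/C),\Pr^1(B/C)$ has dimension $g-1$. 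It then remains to check that $\Phi_A$ has finite fibers on this set. Here I would use that for an étale double covering $\pi^*\pi_*R\cong R\oplus\iota^*R$, so an isomorphism $\pi_*R\cong\pi_*R''$ forces $\{R,\iota^*R\}=\{R'',\iota^*R''\}$ by uniqueness of the decomposition into line bundles; since $\iota^*R\cong R^{-1}$ on $\Pr(B/C)$, the fiber of $\Phi_A$ through $R$ is exactly $\{R,R^{-1}\}$. Thus $\Phi_A$ is generically two-to-one onto its image, each image has dimension $g-1$, and a finite union of such images again has dimension $g-1$.

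The main obstacle is the finiteness of the fibers of $\Phi_A$ in the last step: without it the image could a priori collapse to smaller dimension. This is precisely where the identity $\pi^*\pi_*R\cong R\oplus\iota^*R$ and the relation $\iota^*R\cong R^{-1}$ on the Prym are needed, together with the (routine but necessary) verification that $\Phi_A$ defines a morphism to the coarse moduli space $\SU_C(2,\Oo_C)$ on which isomorphism and S-equivalence coincide over stable bundles; the remainder of the argument is bookkeeping built on Propositions \ref{img}, \ref{intersection} and Mumford's classification.
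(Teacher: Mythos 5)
Your proposal is correct and follows essentially the same route as the paper: Remark \ref{twotor} plus a twist by a square root $A$ of $M$ reduces to Mumford's classification of orthogonal bundles with values in $\Oo_C$, and the dimension count comes from $\dim\Pr(B/C)=g-1$ together with the finiteness (generic two-to-one-ness) of the fibers of $\Phi_A$, using Proposition \ref{intersection} to see that the coincidence locus $R\cong R^{-1}$ is finite. The only local difference is the mechanism for computing the fibers: the paper adjoins the tautological sequences $0\to R^{-1}\to\pi^*E\to R\to 0$ and $\pi^*E\to R'\to 0$ to produce a nonzero morphism $R^{-1}\to R'$, forcing $R'\cong R^{-1}$ by degree reasons, whereas you invoke the Galois splitting $\pi^*\pi_*R\cong R\oplus\iota^*R$ together with Krull--Schmidt uniqueness of the decomposition into indecomposables; both arguments identify the fiber as $\{R,R^{-1}\}$ via $\iota^*R\cong R^{-1}$ on $\Pr(B/C)$, so your version is a valid, if slightly heavier, substitute for the paper's adjunction step.
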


\begin{proof}
Due to Remark \ref{twotor}, if $S^2 E$ is not stable, then $E$ is orthogonal with values in $M\in J_2(C)\backslash\{\Oo_C\}$.
Thus for any line bundle $A$ with $A^2=M$, $E\otimes A$ becomes an orthogonal bundle with values in $\Oo_C$.
Then, by Mumford's classification, $E\otimes A=\pi_*R$ for some nontrivial unramified double covering $\pi:B\to C$ and $R\in\Pr(B/C)$ where $\pi$ corresponds to $M$ since $\det\pi_*\Oo_B=\det\pi_*R=A^2=M$.

In order to prove the next claim, it is enough to show that $\pi_*:\Pr(B/C)\to \SU_C(2,M)$ is generically $2$-to-$1$ for a fixed nontrivial unramified double covering $\pi:B\to C$ corresponding to $M\in J_2(C)\backslash\{\Oo_C\}$ because $\dim\Pr(B/C)=g-1$ and the choice of $M$ is finite.
Let $R,\,R'\in\Pr(B/C)$ and $E=\pi_*R=\pi_*R'$.
As $\pi^*\det E=\pi^*M=\Oo_B$, we get the following exact sequences on $B$ (see the proof of Proposition \ref{img}).
\[
0\to R^{-1}\to \pi^*E\to R\to 0
\quad\text{and}\quad
0\to R'^{-1}\to \pi^*E\to R'\to 0
\]
If $R'\neq R$, then there exists a nonzero morphism $R^{-1}\to R'$ by adjoining the morphisms $R^{-1}\to\pi^*E$ and $\pi^*E\to R'$.
Since both $R$ and $R'$ have degree $0$, $R'\cong R^{-1}$.
So we have either $R'=R$ or $R'=R^{-1}$.
Here, $R^{-1}=\iota^*R$ for the involution $\iota:B\to B$ induced by $\pi$ because $R=S\otimes (\iota^* S)^{-1}$ for some $S\in \Pic(B)$.
Note that $R=\iota^*R$ if and only if $R\in \pi^*J^0(C)$, and $\Pr(B/C)\cap\pi^*J^0(C)$ is finite by Proposition \ref{intersection}.
\end{proof}

Recall that the dimension of the moduli $\SU_C(2,\Oo_C)$ is $3g-3$, and the locus of $E\in \SU_C(2,\Oo_C)$ with $S^2 E$ being strictly semi-stable is the union of the above locus and the locus of strictly semi-stable $E$.
The latter locus is given by the image of $J^0(C)\to \SU_C(2,\Oo_C);\,L\mapsto [L^{-1}\oplus L]$, and its dimension is $g$.

\subsection{\emph{k}-Sections on a Ruled Surface}

The material of this and the next subsection is well-known, but we include it for the sake of notational clarity.
Let $E$ be a vector bundle on $C$ of rank $2$.
The projective space bundle $X=\PP_C(E)$ with projection $\Pi: X\to C$ is called a \emph{ruled surface} over $C$.
We choose the convention that $\PP_C(E)$ is regarded as the projective space of lines in the fibers.
By Tsen's theorem, there exists a section of $\Pi$ and it is possible to regard the image as an effective divisor $C\subseteq X$.
The Picard group of $X$ is given by $\Pic(X)=\ZZ C\oplus\Pi^*\Pic(C)=\{kC+\bb f\,|\,k\in\ZZ,\,\bb\in\Pic(C)\}$.
A \emph{$k$-secant divisor} is a divisor $D$ on $X$ linearly equivalent to $kC+\bb f$ for some $\bb\in\Pic(C)$, and is called a \emph{$k$-section} if $D$ is effective.
If $k=1$ (resp. $2$, $3$), then a $k$-section $D$ is said to be a \emph{section} (resp. \emph{bisection}, \emph{trisection}).\linebreak
We will denote linear equivalence by $\sim$ and numerical equivalence by $\equiv$.

We fix a unisecant divisor $C_1$ on $X$ which satisfies $\pi_*\Oo_X(C_1)=E$.
Then $\pi_*\Oo_X(kC_1)=S^k E$ for $k\geq 0$,\linebreak $\pi_*\Oo_X(kC_1)=0$ for $k<0$, and $R^1\pi_*\Oo_X(kC_1)=\pi_*\Oo_X(-(k+2)C_1)^\vee\otimes (\det E)^\vee$.
We can also deduce that\linebreak ${C_1}^2=\deg E$.
There is a correspondence between $k$-sections on $X$ and line subbundles of $S^k E$ given by
\begin{align*}
\text{effective $D\sim kC_1 + \bb f$}
\ \leftrightarrow\ 
D\in H^0(\Oo_X(kC_1+\bb f))
\ \leftrightarrow\ 
s\in H^0(S^k E\otimes L)
\ \leftrightarrow\ 
\text{inclusion $L^{-1}\xrightarrow{s} S^kE$}
\end{align*}
for $L=\Oo_C(\bb)$, and the self-intersection number of $D$ is equal to
\[
D^2=(kC_1+\bb f)^2=k^2{C_1}^2+2k\deg\bb=k^2\deg E+2k\deg L.
\]

For $k=1$, a section $C_0$ is called a \emph{minimal section} if it attains the minimal self-intersection number $D^2$\linebreak among the sections $D\sim C_1+\bb f$ for some $\bb\in \Pic(C)$.
Though the choice of $C_0$ may not be unique, but the number ${C_0}^2$ is uniquely determined by $E$, and it is called the \emph{Segre invariant} $s_1(E)$.
From the definition\linebreak of stability, it is easy to check that $E$ is stable (resp. semi-stable) if and only if ${C_0}^2>0$ (resp. ${C_0}^2 \geq 0$).

Let $E$ be a semi-stable vector bundle on $C$ of rank $2$ and degree $0$.
Then $S^kE$ is a semi-stable vector bundle of degree $0$, and if $L^{-1}\to S^kE$ is a line subbundle of $\deg L=b$, then it must follow that $b\geq 0$.
That is, $D\equiv kC_1+bf$ is effective only if $b\geq 0$.
Thus the cone of curves $\NE(X)\subseteq N_1(X)$ is contained in\linebreak the cone $\mathcal{C}\subseteq N_1(X)$ which is $\RR_{\geq0}$-spanned by the rays $[f]$ and $[C_1]$.
It is further possible to show that $\overline{\NE}(X)=\mathcal{C}$ when $E$ is semi-stable \cite[I:~p.~70]{Lazarsfeld04}.
Therefore, $S^k E$ is destabilized by a line subbundle if there\linebreak exists a $k$-section $D\equiv kC_1$ on $X$ for some $k>0$, which is equivalent to saying that $\NE(X)$ is closed.

\begin{Rmk}\label{interesting}
Let $E$ be a semi-stable vector bundle on $C$ of rank $2$ with even degree.
There are various characterizations of a $k$-section $D$ on $X$ which corresponds to a destabilizing line subbundle $L^{-1}\to S^k E$.
\begin{enumerate}
\item $D$ has zero self-intersection.
\item $D$ lies on the boundary of $\NE(X)$.
\item $\pi=\Pi|_D:D\to C$ is an unramified $k$-covering if $D$ is irreducible and reduced.
\item $D$ is a smooth curve of genus $kg-k+1$ where $g$ is the genus of $C$ if $D$ is irreducible and reduced.
\end{enumerate}
The proof of (3) is introduced in Rosoff \cite[p.~123: the first remark]{Rosoff02}.
By (3), $D$ is smooth, so the other equivalences can be shown using the adjunction and Hurwitz formula.
\end{Rmk}

\begin{Prop}\label{k-2}
Let $E$ be a vector bundle on $C$ of rank $2$ and degree $0$.
Let $X=\PP_C(E)$ be the ruled surface $\Pi: \PP_C(E)\to C$ and $C_1$ be a unisecant divisor on $X$ with $\Pi_*\Oo_X(C_1)\cong E$.
If there is an irreducible and reduced $k$-section $D\sim kC_1+\bb f$ for some $\bb\in\Pic(C)$ of $\deg \bb=0$,
then $\Oo_D((k-2)C_1+\bb f)=\Oo_D$.
\end{Prop}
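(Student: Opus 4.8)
The plan is to read off the normal bundle of $D$ via adjunction on the surface $X$, after first upgrading the crude hypotheses on $D$ to smoothness of $D$ and \'{e}tale-ness of $\pi=\Pi|_D$. First I would record the numerics: since $\deg\bb=0$ and $C_1^2=\deg E=0$, the self-intersection is
\[
D^2=(kC_1+\bb f)^2=k^2\,C_1^2+2k\deg\bb=0.
\]
Thus $D$ is an irreducible and reduced curve of self-intersection $0$, hence lies on the boundary of $\NE(X)$, so by Remark \ref{interesting} the restriction $\pi=\Pi|_D:D\to C$ is an unramified $k$-covering and $D$ is a smooth curve. In particular $\omega_{D/C}=\Oo_D$, equivalently $\omega_D\cong\pi^*\omega_C$. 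This is the only place where the assumption $\deg\bb=0$, which forces $D^2=0$, is used, and it is the real content of the argument.

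Next I would pin down the relative canonical bundle. Writing $\omega_{X/C}\sim -2C_1+\Pi^*M$ for some $M\in\Pic(C)$ (the coefficient $-2$ being forced by $\omega_{X/C}\cdot f=-2$ on a fiber), relative Serre duality gives
\[
R^1\Pi_*\Oo_X(kC_1)\cong\bigl(\Pi_*(\Oo_X(-kC_1)\otimes\omega_{X/C})\bigr)^\vee\cong\bigl(\Pi_*\Oo_X(-(k+2)C_1)\otimes M\bigr)^\vee\cong\Pi_*\Oo_X(-(k+2)C_1)^\vee\otimes M^\vee,
\]
using the projection formula. Comparing this with the identity $R^1\Pi_*\Oo_X(kC_1)=\Pi_*\Oo_X(-(k+2)C_1)^\vee\otimes(\det E)^\vee$ recorded just before the statement yields $M\cong\det E$, so that $\omega_{X/C}\cong\Oo_X(-2C_1)\otimes\Pi^*\det E$ and $\omega_X=\omega_{X/C}\otimes\Pi^*\omega_C$.

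Finally I would feed this into the adjunction formula $\omega_D\cong(\omega_X+D)|_D$. Substituting $\omega_X$ and $D\sim kC_1+\bb f$ gives
\[
\omega_X+D\sim(k-2)C_1+\bb f+\Pi^*\det E+\Pi^*\omega_C,
\]
and restricting to $D$, then cancelling $\omega_D\cong\pi^*\omega_C=(\Pi^*\omega_C)|_D$ from both sides, leaves
\[
\bigl((k-2)C_1+\bb f+\Pi^*\det E\bigr)\big|_D\cong\Oo_D,
\]
that is, $\Oo_D((k-2)C_1+\bb f)\cong\pi^*(\det E)^\vee$. Since $\det E\cong\Oo_C$ in the normalization at hand (so that $E$ is self-dual), the twist $\pi^*(\det E)^\vee$ is trivial and we obtain $\Oo_D((k-2)C_1+\bb f)=\Oo_D$, as claimed. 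The only genuine obstacle is the first step, namely justifying $\omega_{D/C}=\Oo_D$, which rests on the unramifiedness and smoothness supplied by Remark \ref{interesting}(3),(4); the remainder is determinant bookkeeping on the ruled surface, where the sole subtlety is correctly carrying the factor $\Pi^*\det E$ in $\omega_{X/C}$.
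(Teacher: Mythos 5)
Your proof is correct and follows essentially the same route as the paper's: Remark \ref{interesting} supplies smoothness of $D$ and unramifiedness of $\pi=\Pi|_D$ (hence $\omega_{D/C}=\Oo_D$), and adjunction on $X$ does the rest. The one divergence is the determinant bookkeeping, and there your version is the more careful one: the paper's proof writes $K_X=-2C_1+K_Cf$ outright, which under the normalization $\Pi_*\Oo_X(C_1)=E$ presupposes $\det E=\Oo_C$, whereas your Serre-duality argument correctly pins down $\omega_{X/C}=\Oo_X(-2C_1)\otimes\Pi^*\det E$, so that adjunction in general yields $\Oo_D((k-2)C_1+\bb f)\cong\pi^*(\det E)^\vee$.

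Do note that your final step quietly strengthens the stated hypothesis from $\deg E=0$ to $\det E\cong\Oo_C$. Some such strengthening is genuinely needed, so this is a feature of your write-up rather than a defect: if $\pi:B\to C$ is a nontrivial unramified double covering corresponding to $M\in J_2(C)$ and $R\in J^0(B)$ is general with $N=\Nm_{B/C}(R)$ non-torsion, then $E=\pi_*R$ has degree $0$ but $\det E=M\otimes N$ nontrivial, $B$ sits in $X=\PP_C(E)$ as an irreducible reduced bisection $B\sim 2C_1+\bb f$ with $\Oo_C(\bb)=N^{-1}$, and
\[
\Oo_B((k-2)C_1+\bb f)=\Oo_B(\bb f)=\pi^*N^{-1}=\pi^*(\det E)^\vee\neq\Oo_B
\]
since $N\notin\ker\pi^*=\{\Oo_C,M\}$. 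So Proposition \ref{k-2} as literally phrased requires the trivial-determinant normalization (or at least $\pi^*\det E=\Oo_D$); this is what the paper's proof tacitly assumes, what your proof makes explicit, and what holds in every application of the proposition, since Sections 3 and 4 work throughout with $\det E=\Oo_C$.
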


\begin{proof}
By Remark \ref{interesting}, $D$ is smooth.
Then we can use the adjunction formula,
\[
\omega_D
=\Oo_X(K_X+D)|_D
=\Oo_X((-2C_1+K_C f)+(kC_1+\bb f))|_D,
\]
and it shows that $\omega_{D/C}=\Oo_D((k-2)C_1+\bb f)$.
Again by Remark \ref{interesting}, $D$ is unramified over $C$ so that
$\omega_{D/C}=\Oo_D$.
Thus we have $\Oo_D((k-2)C_1+\bb f)=\Oo_D$.
\end{proof}
\restoregeometry

\subsection{Elementary Transformations}

Let $E$ be a vector bundle on $C$ of rank $2$ and $X=\PP_C(E)$ be the ruled surface $\Pi:\PP_C(E)\to C$.
There are two notions of elementary transformations.
One is for vector bundles and the other is for ruled surfaces.
First, we explain the elementary transformation of vector bundles.
Let $P\in C$ and fix a line $x$ in the fiber $E|_P$.
The elementary transformation $\elm_x E$ of $E$ at $x$ is defined by the following exact sequence.
\[
0\to \elm_x E\to E\xrightarrow{\alpha(x)} \CC_P\to 0
\]
Here, $\alpha(x)$ has the kernel $x$ at the fiber $E|_P\to\CC_P$.
Note that $\det (\elm_x E)=\det E\otimes\Oo_C(-P)$.

Next, for the elementary transformation of ruled surfaces, let $x\in X$ be a closed point.
Notice that the point $x$ in the fiber $Pf=\Pi^{-1}(P)$ can be identified with a line in the fiber $E|_P$ over $P=\Pi(x)$.
Then the elementary transformation $Y:=\elm_x X$ of $X$ at $x$ is the surface given by the following process.
\begin{enumerate}
\item $\widetilde X$ is the blow-up of $X$ at $x$.
The strict transform $\widetilde{Pf}\subseteq\widetilde{X}$ of the fiber $Pf\subseteq X$ is a $(-1)$-curve.
\item $Y$ is the blow-down of $\widetilde{X}$ along $\widetilde{Pf}$.
Then, the strict transform $Z'\subseteq Y$ of the exceptional divisor $Z\subseteq\widetilde{X}$ for $\widetilde{X}\to X$ becomes a smooth rational curve of zero self-intersection.
\end{enumerate}
Then $Y$ is again a ruled surface $\Lambda: \PP_C(F)\to C$ for some vector bundle $F$ on $C$ of rank $2$.
We denote the blow-up and down by
$\varphi:\widetilde{X}\to X$ and $\psi:\widetilde{X}\to Y$, and the center of the blow-down $\psi$ by $y\in Y$.
\[\xymatrix @C=3pc @R=0pc{
&Z\subseteq\widetilde{X}\supseteq \widetilde{Pf}\ar[ld]_\varphi\ar[rd]^\psi&\\
x\in X\ar[rd]_\Pi&&Y\ni y\ar[ld]^\Lambda\\
&P\in C&
}\]

Let $C_1$ be a unisecant divisor on $X$ such that $E=\Pi_*\Oo_X(C_1)$.
Then, for a section $D\sim C_1+\bb f$ on $X$, we have $\Pi_*\Oo_X(D)=E\otimes L$ for $L=\Oo_C(\bb)$.
We will see the nature of the strict transform $D'\subseteq Y$ of $D\subseteq X$ after the elementary transformation with regard to either $x\in D\subseteq X$ or $x\not\in D\subseteq X$.
Let $F=\Lambda_*\Oo_{Y}(D')$.\linebreak
If $x\in D\subseteq X$, then $y\not\in D'\subseteq Y$, and
\begin{align*}
F
&=\Lambda_*\Oo_{Y}(D')
=\Lambda_*(\psi_*(\psi^*\Oo_Y(D')))\\
&=(\Lambda\circ\psi)_*(\Oo_{\widetilde{X}}(\widetilde{D}))
=(\Pi\circ\phi)_*(\Oo_{\widetilde{X}}(\widetilde{D}+Z)\otimes\Oo_{\widetilde{X}}(-Z))\\
&=\Pi_*(\phi_*(\phi^*\Oo_X(D)\otimes\Oo_{\widetilde{X}}(-Z)))
=\Pi_*(\Oo_X(D)\otimes \Ii_x)
\end{align*}
where $\widetilde{D}\subseteq\widetilde{X}$ is the strict transform of $D\subseteq X$.
On the other hand, consider the exact sequence
\[
0\to\Oo_X(D)\otimes \Ii_x\to\Oo_X(D)\to\CC_x\to 0
\]
on $X$.
By pushing forward the sequence, we obtain the following exact sequence on $C$.
\[
0\to F\to E\otimes L\xrightarrow{\beta}\CC_P\to 0
\]
As $\beta=\alpha(x)$, it shows that $F=\elm_x (E\otimes L)=\elm_x E\otimes L$.
Next, if $x\not\in D\subseteq X$, then $y\in D'\subseteq Y$, and
\begin{align*}
F
&=\Lambda_*\Oo_Y(D')
=\Lambda_*(\psi_*(\psi^*\Oo_Y(D')))\\
&=(\Lambda\circ\psi)_*(\Oo_{\widetilde{X}}(\widetilde{D}+\widetilde{Pf}))
=(\Pi\circ\phi)_*(\Oo_{\widetilde{X}}(\widetilde{D}+\widetilde{Pf}+Z)\otimes\Oo_{\widetilde{X}}(-Z))\\
&=\Pi_*(\phi_*(\phi^*\Oo_X(D+Pf)\otimes\Oo_{\widetilde{X}}(-Z)))
=\Pi_*(\Oo_X(D)\otimes \Ii_x)\otimes\Oo_C(P)
\end{align*}
where $\widetilde{Pf}\subseteq \widetilde{X}$ and $\widetilde{D}\subseteq \widetilde{X}$ are as before.
By the same argument, we get the exact sequence
\[
0\to F\otimes\Oo_C(-P)\to E\otimes L\xrightarrow{\beta}\CC_P\to 0
\]
on $C$.
Because $\beta=\alpha(x)$, we deduce that $F=\elm_x(E\otimes L)\otimes\Oo_C(P)=\elm_x E\otimes L(P)$.

\begin{Prop}\label{subbundle_after_elementary_transformation}
Let $D$ be the section on $X=\PP_C(E)$ corresponding to a line subbundle $L^{-1}\to E$.
Then, for the elementary transformation $Y=\elm_x X$ of $X$ at a point $x$ over $P\in C$, and the strict transform $D'\subseteq Y$ of $D\subseteq X$, there exists the corresponding line subbundle
\[
\begin{cases}
\text{$L^{-1}\to \elm_x E$ to $D'\subseteq Y$ and $(D')^2=D^2-1$} & \text{if $x\in D\subseteq X$}, \\
\text{$L^{-1}(-P)\to \elm_x E$ to $D'\subseteq Y$ and $(D')^2=D^2+1$} & \text{if $x\not\in D\subseteq X$}.
\end{cases}
\]
\end{Prop}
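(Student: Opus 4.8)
The plan is to verify the two assertions of each case --- the identification of the corresponding line subbundle and the change in self-intersection number --- separately, handling $x\in D$ and $x\notin D$ in parallel. The first assertion is essentially already contained in the pushforward computation carried out immediately above the statement, so the real content is the intersection-theoretic computation of $(D')^2$.

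For the line subbundle, I would first observe that $\elm_x X$ is an isomorphism away from the fibre $Pf$, so the strict transform $D'$ still meets a general fibre of $\Lambda$ in a single point and is therefore again a section on $Y=\PP_C(\elm_x E)$. Fixing a unisecant $C_1'$ on $Y$ with $\Lambda_*\Oo_Y(C_1')=\elm_x E$, the section--subbundle correspondence of Subsection 2.3 attaches to $D'$ a line subbundle $(L')^{-1}\to\elm_x E$ characterized by $\Lambda_*\Oo_Y(D')=\elm_x E\otimes L'$. But $F=\Lambda_*\Oo_Y(D')$ was just computed: when $x\in D$ one has $F=\elm_x E\otimes L$, so $L'=L$ and the subbundle is $L^{-1}\to\elm_x E$; when $x\notin D$ one has $F=\elm_x E\otimes L(P)$, so $L'=L(P)$ and the subbundle is $L^{-1}(-P)\to\elm_x E$. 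This yields the subbundle part of each case.

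For the self-intersection I would work on the blow-up $\widetilde X$ and transport the answer to $Y$ through $\psi$. The basic relations are $\varphi^*Pf=\widetilde{Pf}+Z$ with $Z^2=-1$ and $\varphi^*(\,\cdot\,)\cdot Z=0$, so that $\widetilde{Pf}=\varphi^*Pf-Z$ and $\widetilde{Pf}^2=-1$. Since $\psi$ contracts the $(-1)$-curve $\widetilde{Pf}$, the strict transform satisfies $\psi^*D'=\widetilde D+(\widetilde D\cdot\widetilde{Pf})\,\widetilde{Pf}$, whence
\[
(D')^2=(\psi^*D')^2=\widetilde D^2+(\widetilde D\cdot\widetilde{Pf})^2 .
\]
It then remains to evaluate the two intersection numbers on the right. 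When $x\in D$ the section $D$ passes smoothly through $x$, so $\widetilde D=\varphi^*D-Z$, giving $\widetilde D^2=D^2-1$ and $\widetilde D\cdot\widetilde{Pf}=D\cdot Pf-1=0$; hence $(D')^2=D^2-1$, consistent with $y\notin D'$. When $x\notin D$ the strict transform is simply $\widetilde D=\varphi^*D$, giving $\widetilde D^2=D^2$ and $\widetilde D\cdot\widetilde{Pf}=\varphi^*D\cdot(\varphi^*Pf-Z)=D\cdot Pf=1$; hence $(D')^2=D^2+1$, consistent with $y\in D'$.

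The computation is routine intersection theory, and the only point that needs genuine care is the value of $\widetilde D\cdot\widetilde{Pf}$. Here one must use that a section and a fibre meet transversally at the single point $x$ when $x\in D$, so that blowing up separates their strict transforms and forces $\widetilde D\cdot\widetilde{Pf}=0$, as opposed to the case $x\notin D$ in which $\varphi^*D$ continues to meet $\widetilde{Pf}$ once. I expect this transversality bookkeeping, rather than any conceptual difficulty, to be the main thing to get right.
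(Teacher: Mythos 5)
Your proposal is correct, and its first half --- identifying the new line subbundle by reading off $F=\Lambda_*\Oo_Y(D')$ from the pushforward computation carried out just before the statement --- is exactly what the paper does. Where you genuinely diverge is the self-intersection. The paper treats $(D')^2$ as an immediate corollary of the subbundle identification: by the formula of Subsection 2.3, a section corresponding to a line subbundle $M^{-1}\to F$ has self-intersection $\deg F+2\deg M$, so $(D')^2=\deg(\elm_x E)+2\deg L=(\deg E-1)+2\deg L=D^2-1$ when $x\in D$, and similarly $(D')^2=(\deg E-1)+2(\deg L+1)=D^2+1$ when $x\notin D$. You instead compute $(D')^2$ by pure intersection theory on the blow-up $\widetilde{X}$, using $\psi^*D'=\widetilde{D}+(\widetilde{D}\cdot\widetilde{Pf})\,\widetilde{Pf}$ together with $\widetilde{Pf}^2=-1$, and your bookkeeping of $\widetilde{D}\cdot\widetilde{Pf}$ ($0$ or $1$ according as $x\in D$ or $x\notin D$, by transversality of a section with the fibre) is accurate, so both cases come out right. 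The trade-off: the paper's route is a one-line consequence of machinery already in place, and it keeps the numerical claim automatically in sync with the bundle-theoretic one; your route is independent of the section--subbundle degree formula, so it serves as a genuine consistency check on the identification of $F$, and as a bonus the multiplicity $\widetilde{D}\cdot\widetilde{Pf}$ recovers the paper's observation that $y\notin D'$ when $x\in D$ and $y\in D'$ when $x\notin D$.
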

\begin{proof}
Note that, in the case where $x\in D$, the strict transform $D'$ on $Y$ corresponds to the line subbundle $L^{-1}\to\elm_xE$.
So the self-intersection number is given by
\[
(D')^2
=\deg(\elm_x E)+2\deg L
=(\deg E-1)+2\deg L
=(\deg E+2\deg L)-1
=D^2-1.
\]
The proof is similar for the case where $x\not\in D$.
\end{proof}

In the both cases, whether $x\in D$ or not, we can see that $\elm_x (\Pi_*\Oo_X(D))$ and $\Lambda_*\Oo_{Y}(D')$ differ by\linebreak a twist of a line bundle, and hence $Y=\PP_C(F)=\PP_C(\elm_x E)$ for $Y=\elm_x X$.

The elementary transformation can be defined at multiple points of $X$ unless the set of points contains distinct points in the same fiber of $\Pi:X\to C$.
We introduce an example of an elementary transformation taken at a double point $x\leadsfrom y$ in which $x$ is a closed point of $X$ over $P\in C$ and $y$ is a point infinitely near to $x$ but not the infinitely near point of the fiber $Pf\subseteq X$.
Equivalently, $y$ is a closed point of $\elm_x X$ over the same point $P\in C$ but not the center of the blow-down $\Bl_x X\to \elm_x X$.

\begin{Ex}
Let $X=\PP_C(E)$ be a ruled surface $\Pi:\PP_C(E)\to C$ and $D$ be a section on $X$ such that $\Pi_*\Oo_X(D)=E\otimes L$ for some line bundle $L$.
Let $x$ be a closed point of $D$ and $y$ be the infinitely near point of $D$ at $x$ given by the intersection point $\widetilde{D}\cap Z$ of the exceptional fiber $Z$ of the blow-up $\text{Bl}_x X\to X$ and the strict transform $\widetilde{D}$ of $D$ on $\text{Bl}_x X$.
Let $P=\Pi(x)$ and $\Ii_{x\leadsfrom y}$ be the ideal sheaf on $X$ which defines $y$ infinitely near to $x$.
By pushing forward the exact sequence
\[
0
\to\Oo_X(D)\otimes \Ii_{x\leadsfrom y}
\to\Oo_X(D)
\to\Oo_X/\Ii_{x\leadsfrom y}
\to 0
\]
on $X$, we have the following exact sequence on $C$.
\[
0\to \elm_{x\leadsfrom y}E\otimes L\to E\otimes L\to \Oo_{2P}\to 0
\]
By Proposition \ref{subbundle_after_elementary_transformation}, we obtain that $\Lambda_*(\Oo_{Y}(D'))=\elm_{x\leadsfrom y}E\otimes L$ where $Y=\elm_{x\leadsfrom y}X$ is a ruled surface $\Lambda:Y\to C$ and $D'\subseteq Y$ is the strict transform of $D\subseteq X$.
\end{Ex}

\subsection{Generation of Orthogonal Bundles by Elementary Transformations}

Choi and Park \cite{Choi-Park15} use elementary transformations to construct a ruled surface $X=\PP_C(E)$ where $E$ is stable and $X$ admits\linebreak a bisection of zero self-intersection.
As we have seen so far, in this case, $S^2 E$ is strictly semi-stable.
Also, $E$ is orthogonal if the degree of $E$ is normalized to be $0$ since $E$ is stable.
In this subsection, we will briefly review the construction, and show that the elementary transformation construction generates all the orthogonal bundles.

Let $M\in J_2(C)$ be nontrivial and $Y$ be the ruled surface $\Lambda: \PP_C(\Oo_C\oplus M)\to C$.
Then $Y$ has only two minimal sections $C_0$ and $C_\infty$ which respectively correspond to $\Oo_C\to\Oo_C\oplus M$ and $M\to\Oo_C\oplus M$.
Because there is a $1$-dimensional family of bisections on $Y$ linearly equivalent to $2C_0$ whereas $Y$ has only finitely many sections numerically equivalent to $C_0$, there exists an irreducible bisection in the linear equivalence class of $2C_0$.
So fix an irreducible bisection $B'\sim 2C_0$ on $Y$ which corresponds to a line subbundle $\Oo_C\to S^2(\Oo_C\oplus M)=\Oo_C\oplus M\oplus\Oo_C$.
Then we can obtain the desired ruled surfaces by taking elementary transformations of $Y$ at general points of $B'$.

Let $x_1,\,x_2,\,\ldots,\,x_{2n}$ be arbitrary closed points of $B'$ on $Y$ and $X$ be the ruled surface $\Pi:X\to C$ obtained by taking elementary transformations of $Y$ at $x_1,\,x_2,\,\ldots,\,x_{2n}$.
To avoid technical issues, we will not deal with the cases where the points involve distinct closed points in the same fiber of $\Lambda$.
However, we allow repeated points.
If $x_1=x_2=\cdots=x_m$ for example, then we can take an elementary transformation at $x_1\leadsfrom x_2\leadsfrom \cdots \leadsfrom x_m$ where $x_{i+1}$ is infinitely near to $x_1$ in the direction $i$-th tangent to $B'$, that is,\linebreak $x_{i+1}$ is given recursively by the intersection point $ \widetilde{B'_i}\cap Z_i$ on the $i$-th blow-up $Y_i\to Y$ where $Z_i$ is the exceptional divisor of the blow-up $Y_i\to Y_{i-1}$ centered at $x_i$ with initial $Y_0=Y$ and $\widetilde{B'_i}\subseteq Y_i$ is the strict transform of $B'\subseteq Y$.

Since ${B'}^2=0$, the smoothness of $B'$ follows from Remark \ref{interesting}.
Then it is easy to check that the strict transform $B\subseteq X$ of $B'\subseteq Y$ satisfies $B^2=0$ as well.
Again by Remark \ref{interesting}, $B$ is smooth, so the strict transform from $B'$ to $B$ is an isomorphism.
Thus we can regard the points $x_1,\,x_2,\,\ldots,\,x_{2n}$ as points of $B$,\linebreak and $\pi=\Pi|_B: B\to C$ as the unramified double covering corresponding to $M$ as the same with $B'\to C$.

\begin{center}
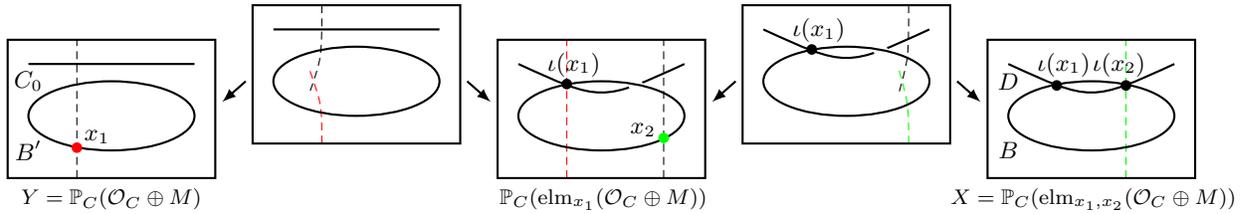
\begin{figure}[h]
\begin{tikzpicture}[scale=0.92]

\pgfmathsetmacro{\wi}{3}
\pgfmathsetmacro{\he}{2}
\pgfmathsetmacro{\ri}{1.18}
\pgfmathsetmacro{\up}{0.5}
\pgfmathsetmacro{\t}{0.1}

\coordinate (a) at (0*\ri*\wi,0);
\coordinate (b) at (1*\ri*\wi,\up);
\coordinate (c) at (2*\ri*\wi,0);
\coordinate (d) at (3*\ri*\wi,\up);
\coordinate (e) at (4*\ri*\wi,0);

\tikzset{>=latex}
\draw[thick,<-]
(0*\ri*\wi+\wi+\t, \he/2+\t) --
(1*\ri*\wi-\t,     \he/2-\t+\up);
\draw[thick,->]
(1*\ri*\wi+\wi+\t, \he/2-\t+\up) --
(2*\ri*\wi-\t,     \he/2+\t);
\draw[thick,<-]
(2*\ri*\wi+\wi+\t, \he/2+\t) --
(3*\ri*\wi-\t,     \he/2-\t+\up);
\draw[thick,->]
(3*\ri*\wi+\wi+\t, \he/2-\t+\up) --
(4*\ri*\wi-\t,     \he/2+\t);

\node at (1.5+0.0*\wi,-0.3)
{\footnotesize $Y=\PP_C(\Oo_C\oplus M)$};
\node at (1.5+2.37*\wi,-0.3)
{\footnotesize $\PP_C(\elm_{x_1}(\Oo_C\oplus M))$};
\node at (1.5+4.73*\wi,-0.3)
{\footnotesize $X=\PP_C(\elm_{x_1,x_2}(\Oo_C\oplus M))$};

\def\surface
{(0,0)--(\wi,0)--(\wi,\he)--(0,\he)--cycle;}
\def\bisection
{(\wi/2,\he/2-0.1) ellipse (1.2 and 0.5)}

\begin{scope}[shift={($(a)$)}]
\draw[thick] {(0.3,\he-0.35)--(\wi-0.3,\he-0.35)};

\draw[black,densely dashed] (\wi/3,0)--(\wi/3,\he);

\draw[thick] \bisection;
\draw[thick] \surface;

\node at (\wi*0.1,\he*0.7) {\small $C_0$};
\node at (\wi*0.1,\he*0.2) {\small $B'$};
\node at (\wi*0.43,\he*0.3) {\small $x_1$};
\node[circle,fill=red,inner sep=0pt,minimum size=4pt] at (\wi/3,\he*0.22) {};
\end{scope}

\begin{scope}[shift={($(b)$)}]
\draw[thick] {(0.3,\he-0.35)--(\wi-0.3,\he-0.35)};

\draw[black,densely dashed] (\wi/3,\he) .. controls (\wi/3,\he/3*2-0.1) .. (0.27*\wi,\he/5*2-0.1);
\draw[red,densely dashed] (\wi/3,0) .. controls (\wi/3,\he/3-0.1) .. (0.27*\wi,\he/5*3-0.1);

\draw[thick] \bisection;
\draw[thick] \surface;
\end{scope}

\begin{scope}[shift={($(c)$)}]
\draw[thick] {(0.3,\he-0.35) .. controls (\wi/2,\he/2+0.1) .. (\wi-0.3,\he-0.35)};
\node[circle,draw=white,fill=white,inner sep=0pt,minimum size=5pt] at (\wi/3*2,\he/3*2) {};

\draw[red,densely dashed] (\wi/3,0)--(\wi/3,\he);
\draw[black,densely dashed] (\wi/5*4,0)--(\wi/5*4,\he);

\draw[thick] \bisection;
\draw[thick] \surface;

\node at (\wi*0.7,\he*0.35) {\small $x_2$};
\node at (\wi/3+0.1,\he*0.67+0.3) {\small $\iota(x_1)$};
\node[circle,fill=black,inner sep=0pt,minimum size=4pt] at (\wi/3,\he*0.68) {};
\node[circle,fill=green,inner sep=0pt,minimum size=4pt] at (\wi/5*4,\he*0.29) {};
\end{scope}

\begin{scope}[shift={($(d)$)}]
\draw[thick] {(0.3,\he-0.35) .. controls (\wi/2,\he/2+0.1) .. (\wi-0.3,\he-0.35)};
\node[circle,draw=white,fill=white,inner sep=0pt,minimum size=5pt] at (\wi/3*2,\he/3*2) {};

\draw[black,densely dashed] (\wi/5*4,\he) .. controls (\wi/5*4,\he/3*2-0.1) .. (0.75*\wi,\he/5*2-0.1);
\draw[green,densely dashed] (\wi/5*4,0) .. controls (\wi/5*4,\he/3-0.1) .. (0.75*\wi,\he/5*3-0.1);

\draw[thick] \bisection;
\draw[thick] \surface;

\node at (\wi/3+0.1,\he*0.67+0.3) {\small $\iota(x_1)$};
\node[circle,fill=black,inner sep=0pt,minimum size=4pt] at (\wi/3,\he*0.68) {};
\end{scope}

\begin{scope}[shift={($(e)$)}]
\draw[green,densely dashed] (\wi/3*2,0)--(\wi/3*2,\he);

\draw[thick] {(0.3,\he-0.35) .. controls (\wi/2,\he/2+0.1) .. (\wi-0.3,\he-0.35)};

\draw[thick] \bisection;
\draw[thick] \surface;

\node at (\wi*0.1,\he*0.7) {\small $D$};
\node at (\wi*0.1,\he*0.2) {\small $B$};
\node at (\wi/3+0.1,\he*0.67+0.3) {\small $\iota(x_1)$};
\node at (\wi/3*2-0.1,\he*0.67+0.3) {\small $\iota(x_2)$};
\node[circle,fill=black,inner sep=0pt,minimum size=4pt] at (\wi/3,\he*0.67) {};
\node[circle,fill=black,inner sep=0pt,minimum size=4pt] at (\wi/3*2,\he*0.67) {};
\end{scope}
\end{tikzpicture}

\vspace{-10pt}
\caption{Elementary transformation at $2n$ points for $n=1$}
\end{figure}
\end{center}

\vspace{-2em}
Let $D$ be a section on $X$ given by the strict transform of $C_0\subseteq Y$.
As we can observe from the diagram of the case $n=1$, we have
\[
\Oo_X(D)|_B=\Oo_B(\iota(x_1)+\iota(x_2)+\cdots+\iota(x_{2n}))
\]
where $\iota:B\to B$ is the involution induced by $\pi$.
Let $P_i=\Pi(x_i)$ for $i=1,2,\,\ldots,\,2n$ and $L=\Oo_C(\bb)$ be a line bundle on $C$ such that $L^2=\Oo_C(P_1+P_2+\cdots+P_{2n})$.
Then
\[
\Nm_{B/C}(\Oo_B(\iota(x_1)+\iota(x_2)+\cdots+\iota(x_{2n}))\otimes \pi^*L^{-1})=\Oo_C,
\]
and hence $E:=\pi_*(\Oo_B(\iota(x_1)+\iota(x_2)+\cdots+\iota(x_{2n}))\otimes \pi^*L^{-1})=\pi_*\Oo_B(D-\bb f)$ is an orthogonal bundle with values in $\Oo_C$ whose rank is $2$ and determinant is $M$.
Since $B$ is effective, there exists the exact sequence
\[
0
\to \Oo_X(D-\bb f-B)
\to \Oo_X(D-\bb f)
\to \Oo_X(D-\bb f)|_B
\to 0
\]
on $X$, and by pushing forward the sequence to $C$, we get $E= \Pi_*\Oo_X(D-\bb f)= \Pi_*\Oo_X(D)\otimes L^{-1}$.
Applying Proposition \ref{subbundle_after_elementary_transformation}, we have $\elm_{x_1,x_2,\ldots,x_{2n}} (\Oo_C\oplus M)=\Pi_*\Oo_X(D)\otimes \Oo_C(-P_1-P_2-\cdots -P_{2n})$ because none of $x_i$ is contained in $C_0$ due to $C_0.B=0$.
Therefore,
\[
E
=\left(\Pi_*\Oo_X(D)\otimes L^{-2}\right)\otimes L
=\elm_{x_1,x_2,\ldots,x_{2n}} (\Oo_C\oplus M)\otimes L.
\]
Thanks to Proposition \ref{img}, this argument further asserts that $E$ is semi-stable and stable in general.\linebreak
The next theorem shows that this process generates the orthogonal bundles.

\begin{Thm}\label{generation}
Let $M\in J_2(C)\backslash\{\Oo_C\}$ and $E$ be a vector bundle on $C$ of rank $2$ and determinant $M$.\linebreak
If $E$ is orthogonal with values in $\Oo_C$,
then there exist points $x_1,\,x_2\,\ldots,\,x_{2n}$ of a bisection $B'$ on the ruled surface $Y=\PP_C(\Oo_C\oplus M)$ such that $E=\elm_{x_1,\,x_2,\,\ldots, x_{2n}}(\Oo_C\oplus M)\otimes L$ for some $L\in J^n(C)$.
\end{Thm}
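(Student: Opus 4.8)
The plan is to combine Mumford's classification with the explicit computation carried out in the paragraph preceding the statement. That computation shows that for points $x_1,\dots,x_{2n}$ of the bisection $B'$ (identified with the double cover $\pi\colon B\to C$) and any $L\in J^n(C)$ with $L^2=\Oo_C(P_1+\cdots+P_{2n})$ one has
\[
\elm_{x_1,\dots,x_{2n}}(\Oo_C\oplus M)\otimes L=\pi_*\big(\Oo_B(\iota(x_1)+\cdots+\iota(x_{2n}))\otimes\pi^*L^{-1}\big),
\]
so it suffices to realize the given $E$ as the right-hand side for a suitable $L$ and suitable points $x_i$. First I would apply Mumford's classification: since $E$ is orthogonal with values in $\Oo_C$ and $\det E=M\neq\Oo_C$, the split cases (which have trivial determinant) are excluded, so $E\cong\pi_*R$ for the unramified double covering $\pi\colon B\to C$ and some $R\in\Pic(B)$ with $\Nm_{B/C}(R)=\Oo_C$, i.e.\ $R\in\Pr(B/C)$. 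Comparing determinants through $\det\pi_*R=\det\pi_*\Oo_B\otimes\Nm_{B/C}(R)$ forces $\det\pi_*\Oo_B=M$, so this $\pi$ is exactly the covering attached to $M$, hence the one carried by the bisection $B'\sim 2C_0$ on $Y=\PP_C(\Oo_C\oplus M)$. I therefore identify $B$ with $B'$ over $C$ and transport divisors freely between them.

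The heart of the argument is to write $R=\Oo_B(\iota(x_1)+\cdots+\iota(x_{2n}))\otimes\pi^*L^{-1}$, equivalently $R\otimes\pi^*L=\Oo_B(F)$ for an effective divisor $F$ whose points, after applying $\iota$, give an admissible configuration on $B'$. I would fix $L\in J^n(C)$ with $n\gg 0$. Then $\deg(R\otimes\pi^*L)=2n$, and since $B$ has genus $2g-1$, Riemann--Roch gives $\hom(R\otimes\pi^*L)\geq 2n-2g+2>0$; moreover, for $n$ large the line bundle $R\otimes\pi^*L$ separates pairs of points (it is $2$-very ample). A general member $F=y_1+\cdots+y_{2n}$ of $|R\otimes\pi^*L|$ is then reduced, and for each $p\in C$ the divisors containing the full fibre $\pi^{-1}(p)=y+\iota(y)$ form the subsystem $|R\otimes\pi^*L-\pi^*p|$ of codimension $2$; letting $p$ range over the $1$-dimensional $C$, their union has dimension $\leq\dim|R\otimes\pi^*L|-1$, so a general $F$ contains no pair of distinct $\iota$-conjugate points. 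Setting $x_i:=\iota(y_i)$ then produces $2n$ points of $B\cong B'$ whose images $P_i=\pi(x_i)$ are pairwise distinct, so the $x_i$ lie in distinct fibres of $\Pi$ and form an admissible configuration (repeated points, arising when $F$ is non-reduced, are permitted as infinitely near points along $B'$).

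With these choices $\Oo_B(\iota(x_1)+\cdots+\iota(x_{2n}))=\Oo_B(F)=R\otimes\pi^*L$, so $R=\Oo_B(\sum\iota(x_i))\otimes\pi^*L^{-1}$ and $E=\pi_*R$ is exactly the right-hand side of the displayed identity. The one compatibility the construction still demands is $L^2=\Oo_C(P_1+\cdots+P_{2n})$, and this is automatic: applying $\Nm_{B/C}$ to $R\otimes\pi^*L=\Oo_B(F)$ and using $\Nm_{B/C}(R)=\Oo_C$ gives $L^2=\Nm_{B/C}(\Oo_B(F))=\Oo_C(\pi(y_1)+\cdots+\pi(y_{2n}))=\Oo_C(P_1+\cdots+P_{2n})$, so the $L$ I chose is a valid square root in the construction. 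Hence the pre-theorem computation applies verbatim and yields $E=\elm_{x_1,\dots,x_{2n}}(\Oo_C\oplus M)\otimes L$ with $L\in J^n(C)$, as required.

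The step I expect to be the main obstacle is the genericity in the middle paragraph. The elementary-transformation construction is only set up for configurations with no two distinct points in a common fibre of $\Pi$, which translates into the requirement that $R\otimes\pi^*L$ admit an effective representative $F$ with no distinct $\iota$-conjugate pair. Securing this is precisely where enlarging $\deg L$ is essential, both to make $|R\otimes\pi^*L|$ positive-dimensional and point-separating and to run the dimension count showing that fibre-containing divisors form a proper closed subset; note this works uniformly in $R$, including the case where $R\in\pi^*\Pic(C)$ (strictly semi-stable $E$), since a high-degree pullback still has non-$\iota$-invariant effective members. Everything else reduces to Mumford's theorem and the norm and adjunction bookkeeping already carried out above.
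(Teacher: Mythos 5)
Your proposal is correct, and its skeleton matches the paper's: invoke Mumford's classification to write $E=\pi_*R$ with $R\in\Pr(B/C)$ for the covering $\pi:B\to C$ attached to $M$ (your determinant comparison ruling out the split/trivial-cover cases is exactly the right check), then reduce everything to writing $R\otimes\pi^*L=\Oo_B(\iota(x_1)+\cdots+\iota(x_{2n}))$ for an admissible configuration on $B'$ and quoting the pre-theorem computation, with the compatibility $L^2=\Oo_C(P_1+\cdots+P_{2n})$ recovered from the norm map in both treatments. Where you genuinely diverge is the existence step for the points $x_i$. The paper fixes $L$ of degree exactly $g$, uses that $2g\geq g(B)=2g-1$ so the Abel--Jacobi map $\mathrm{Sym}^{2g}B\to J^{2g}(B)$ is surjective to produce $2g$ points, and then handles forbidden configurations by explicit cancellation: a conjugate pair $x,\iota(x)$ contributes $\pi^*\Oo_C(P)$, so deleting it and replacing $L$ by $L(-P)$ preserves the identity with two fewer points; iterating yields an admissible configuration. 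You instead take $n\gg 0$, use Riemann--Roch and nonspeciality to make $|R\otimes\pi^*L|$ positive-dimensional, and run a Bertini-type count: divisors containing some full fibre $\pi^*p$ form a $1$-parameter family of codimension-$2$ linear subsystems, hence a closed subset of dimension at most $\dim|R\otimes\pi^*L|-1$, so a general member contains no conjugate pair at all. Each route buys something: the paper's argument is effective and economical (at most $2g$ points, no genericity), while yours avoids the cancellation/case analysis entirely and, as you note, works uniformly in $R$ including $R\in\pi^*\Pic(C)$, at the cost of a large twist and a dimension count. Your parenthetical handling of non-reduced $F$ via infinitely near points is consistent with the paper's conventions, so I see no gap.
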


\begin{proof}
Let $B'\subseteq Y$ be as before and $\pi: B\to C$ be the unramified double covering corresponding to $M$\linebreak with involution $\iota:B\to B$.
Then $E=\pi_*R$ for some $R\in \Pr(B/C)$.
Fix a line bundle $L$ on $C$ of degree $g$.\linebreak
Since $2g\geq 2g-1=g(B)=\dim J^{2g}(B)$, there exist points $x_1,\,x_2,\,\ldots,\,x_{2g}\in B\cong B'$ such that
\[
\Oo_B(\iota(x_1)+\iota(x_2)+\cdots+\iota(x_{2g}))=R\otimes \pi^*L
\ \text{in $J^{2g}(B)$}
\]
and $P_i=\pi(x_i)$ satisfies $L^2=\Nm_{B/C}(R\otimes\pi^*L)=\Oo_C(P_1+P_2+\cdots+P_{2g})$ because $\Nm_{B/C}(R)=\Oo_C$.
If there are two points of the form $x$ and $\iota(x)$ in $x_i$, say $x_{2g}=\iota(x_{2g-1})$, then, by taking subtraction as
\[
\Oo_B(\iota(x_1)+\iota(x_2)+\cdots+\iota(x_{2g-2}))=R\otimes \pi^*(L(-P_g))
\ \text{in $J^{2g-2}(B)$},
\]
we may assume that there is no pair of points in $x_i$ of the form $x$ and $\iota(x)$.
Thus we can apply the previous argument to have $E\cong \elm_{x_1,x_2,\ldots,x_{2n}}(\Oo_C\oplus M)\otimes L$. 
\end{proof}

For the completeness of the exposition, we leave the following remark which states that the orthogonal bundles of the form $A^{-1}\oplus A$ are also generated by elementary transformations from $\Oo_C\oplus \Oo_C$.

\begin{Rmk}
Let $Y$ be the ruled surface $\Lambda:\PP_C(\Oo_C\oplus\Oo_C)\to C$.
We can choose two distinct sections\linebreak $C_0$ and $C_\infty$ on $Y$ corresponding to different inclusions $\Oo_C\to\Oo_C\oplus\Oo_C$.
Then they have no intersection.
For $x_1,\,\ldots,\,x_n\in C_0$ and $x_{n+1},\,\ldots,\,x_{2n}\in C_\infty$ with $P_i=\Lambda(x_i)$, we can define $\elm_{x_1,x_2,\ldots,x_{2n}} F$ whenever $\{P_1,\,\ldots,\,P_n\}\cap \{P_{n+1},\,\ldots,\,P_{2n}\}=\emptyset$.
By Proposition \ref{subbundle_after_elementary_transformation}, we have two distinct injections
\[
\Oo_C(-P_{n+1}-\cdots-P_{2n})\to \elm_{x_1,x_2,\ldots,x_{2n}}(\Oo_C\oplus\Oo_C)
\ \ \text{and}\ \ 
\Oo_C(-P_1-\cdots-P_n)\to \elm_{x_1,x_2,\ldots,x_{2n}}(\Oo_C\oplus \Oo_C).
\]
Since they destabilize $\elm_{x_1,x_2,\ldots,x_{2n}}(\Oo_C\oplus \Oo_C)$, we have
\[
\elm_{x_1,x_2,\ldots,x_{2n}}(\Oo_C\oplus\Oo_C)\otimes L=L(-P_1-\cdots-P_n)\oplus L(-P_{i+1}-\cdots-P_{2n}),
\]
for any $L\in\Pic(C)$.
As the choices of $x_i$ are arbitrary, we can generate $E=A^{-1}\oplus A$ for all $A\in J^0(C)$ in this way.
Indeed, after fixing $L\in J^g(C)$, we can find points $P_1,\,P_2\,\ldots,\,P_{2g}\in C$ which satisfy $A=L(-P_1-\cdots-P_g)$ and $A^{-1}=L(-P_{g+1}-\cdots-P_{2g})$.
If $\{P_1,\,\ldots,\,P_g\}\cap \{P_{g+1},\,\ldots,\,P_{2g}\}\neq\emptyset$, say $P_g=P_{2g}$, then we can reduce to the case $A=L'(-P_1-\cdots -P_{g-1})$ and $A^{-1}=L'(-P_{g+1}-\cdots -P_{2g-1})$ by substituting $L'=L(-P_g)$ for $L$.
Thus we can obtain that $\elm_{x_1,x_2,\ldots,x_{2n}}(\Oo_C\oplus\Oo_C)\otimes L=A^{-1}\oplus A$ for some $L\in J^n(C)$ after the substitutions.
\end{Rmk}

\vspace{1em}
\section{Semi-Stable Vector Bundles Whose Symmetric Cube is Not Stable}

\subsection{Destabilized by Rank 1}

Let $E$ be a stable vector bundle on $C$ of rank $2$.
If $S^3E$ is not stable, then it is destabilized by a subbundle of rank $1$ or $2$ by Remark \ref{first}.
We first study the case of rank $1$.

\begin{Prop}\label{tosmall}
Let $k\geq 2$.
For a vector bundle $E$ of rank $2$ and any line bundle $L$ on $C$, there exist the following exact sequences.
\begin{gather*}
0
\to \Hom(S^{k+1}E, L\otimes \det E)
\to \Hom(S^k E, E\otimes L)
\to \Hom(S^{k-1}E, L)\\
0
\to \Hom(L^{-1}\otimes \det E, S^{k+1}E)
\to \Hom(E\otimes L^{-1}, S^k E)
\to \Hom(L^{-1}, S^{k-1}E)
\end{gather*}
In particular, when $E$ is stable and has trivial determinant, for a line bundle $L$ of degree $0$,
\begin{itemize}
\item if $S^{k+1} E$ is destabilized by $S^{k+1}E\to L$, then $S^kE$ is destabilized by $S^k E\to E\otimes L$,
\item if $S^{k+1} E$ is destabilized by $L^{-1}\to S^{k+1}E$, then $S^k E$ is destabilized by $E\otimes L^{-1}\to S^k E$,
\end{itemize}
and the converses hold if $S^{k-2} E$ is stable.
\end{Prop}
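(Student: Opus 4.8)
The plan is to obtain both left-exact sequences by applying suitable $\Hom$-functors to the two Clebsch--Gordan sequences carried by the rank-two bundle $E$, and then to translate ``nonzero $\Hom$'' into ``destabilizing'' by slope bookkeeping. First I would record, for any rank-two $E$, the multiplication sequence
\[
0 \to S^{k-1}E\otimes\det E \to S^kE\otimes E \to S^{k+1}E \to 0,
\]
whose surjection is multiplication and whose kernel is $S^{k-1}E\otimes\det E$. Over $\CC$ the comultiplication $S^{k+1}E\to S^kE\otimes E$ splits this sequence (Euler's identity makes its composite with the multiplication equal to $(k+1)\cdot\mathrm{id}$), so the complementary sequence
\[
0 \to S^{k+1}E \to S^kE\otimes E \to S^{k-1}E\otimes\det E \to 0
\]
is exact as well. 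Throughout I would use $E^\vee\cong E\otimes(\det E)^{-1}$.

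Next I would produce the two displayed sequences. Applying $\Hom(-,\,L\otimes\det E)$ to the first sequence, and using $\Hom(S^kE\otimes E,\,L\otimes\det E)\cong\Hom(S^kE,\,E^\vee\otimes L\otimes\det E)=\Hom(S^kE,\,E\otimes L)$ together with $\Hom(S^{k-1}E\otimes\det E,\,L\otimes\det E)=\Hom(S^{k-1}E,\,L)$, gives the first. Applying $\Hom(L^{-1}\otimes\det E,\,-)$ to the second sequence, and using $\Hom(L^{-1}\otimes\det E,\,S^kE\otimes E)=\Hom(E\otimes L^{-1},\,S^kE)$ and $\Hom(L^{-1}\otimes\det E,\,S^{k-1}E\otimes\det E)=\Hom(L^{-1},\,S^{k-1}E)$, gives the second. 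Left-exactness of $\Hom$ on short exact sequences then yields the claimed four-term exactness.

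For the forward implications I would set $\det E=\Oo_C$ and $\deg L=0$. A destabilizing quotient $S^{k+1}E\to L$ is a nonzero element of $\Hom(S^{k+1}E,L)$; injecting it into $\Hom(S^kE,E\otimes L)$ produces a nonzero $\phi$. Since $E$ is stable and $L$ has degree $0$, the bundle $E\otimes L$ is stable of slope $0$, while $S^kE$ is semi-stable of slope $0$; hence the image of $\phi$ has slope $0$, and a rank-one image would contradict stability of $E\otimes L$, so the image is all of $E\otimes L$ and $E\otimes L$ is a destabilizing rank-two quotient of $S^kE$. The subbundle case is dual: a nonzero $L^{-1}\to S^{k+1}E$ injects into $\Hom(E\otimes L^{-1},S^kE)$, and stability of $E\otimes L^{-1}$ forces the resulting map to be injective, exhibiting $E\otimes L^{-1}$ as a destabilizing rank-two subbundle.

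For a converse I would start from a destabilizing $\phi\colon S^kE\to E\otimes L$ (resp. $E\otimes L^{-1}\to S^kE$) and try to lift it along the injection to $\Hom(S^{k+1}E,L)$ (resp. $\Hom(L^{-1},S^{k+1}E)$); by exactness a lift exists once the rightmost term $\Hom(S^{k-1}E,L)$ vanishes, and the lift is again surjective (resp. injective) by the slope argument above. The hypothesis that $S^{k-2}E$ is stable is exactly what forces this vanishing: by self-duality $\Hom(S^{k-1}E,L)=H^0(S^{k-1}E\otimes L)$ detects a degree-$0$ line subbundle of $S^{k-1}E$, and such a subbundle would, by the already-proven forward implication applied one level down, destabilize $S^{k-2}E$, contradicting its stability. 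I expect this bootstrapping to be the delicate point: it runs cleanly for $k\geq 4$, where the forward implication is available at level $k-2\geq 2$, whereas the low cases ($k=2$, and especially $k=3$, which is entangled with the orthogonal and exceptional bundles classified in Sections~2--3) must be inspected directly. The homological algebra of the first two steps is routine; the real content of the statement is this transfer of destabilization between consecutive symmetric powers.
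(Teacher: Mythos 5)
Your construction of the two exact sequences is correct, but it runs along a different track from the paper's. The paper produces the multiplication sequence $0\to S^{k-1}E\otimes\det E\to S^kE\otimes E\to S^{k+1}E\to 0$ geometrically, by pushing forward the twisted relative Euler sequence on the ruled surface $\PP_C(E)$, and then obtains the ``complementary'' sequence not from a splitting but by dualizing and twisting, using $E^\vee\cong E\otimes(\det E)^{-1}$ and $(S^mE)^\vee\cong S^mE\otimes(\det E)^{-m}$; taking global sections yields the two $\Hom$ sequences. You instead invoke the characteristic-zero Clebsch--Gordan splitting $S^kE\otimes E\cong S^{k+1}E\oplus(S^{k-1}E\otimes\det E)$ via the comultiplication section. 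That is perfectly valid here (it even gives surjectivity of the last arrow of each four-term sequence, which the paper neither states nor needs), and it is consistent with the paper's own later observation, in the remark following Lemma \ref{generalization}, that such sequences follow from \cite[Exercise 11.11]{Fulton-Harris91}; the cost is that you assert the identification of the kernel of multiplication with $S^{k-1}E\otimes\det E$, which is exactly what the paper's geometric derivation proves. Your slope arguments for the two bullet implications are correct as well, and they supply something the paper's written proof omits entirely: the paper stops once the exact sequences are established and leaves the ``in particular'' clause to the reader.

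Your hesitation at $k=3$ is not over-caution; it marks a defect in the statement rather than a gap in your argument. For $k=3$ the condition ``$S^{k-2}E=E$ is stable'' is vacuous (it is already assumed), and the converse then fails: take $E$ a general stable orthogonal bundle with $\det E=\Oo_C$, i.e.\ $E=\pi_*R\otimes A$ for the double cover $\pi:B\to C$ attached to $M\in J_2(C)\backslash\{\Oo_C\}$, $A^2=M$, and $R\in\Pr(B/C)$ general. Then $M\to S^2E$ is a destabilizing line subbundle, so Proposition \ref{tolarge} makes $E\otimes M=E\otimes L^{-1}$ (with $L=M$) a destabilizing subbundle of $S^3E$. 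On the other hand $\pi^*E=N\oplus N^{-1}$ with $N^2=R^2$ non-torsion for general $R$, so, using $\pi_*\Oo_B=\Oo_C\oplus M$,
\[
h^0(S^4E)+h^0(S^4E\otimes M)
=h^0\bigl(\pi^*S^4E\bigr)
=h^0\bigl(N^4\oplus N^2\oplus\Oo_B\oplus N^{-2}\oplus N^{-4}\bigr)=1,
\]
while the square of the section $M\to S^2E$ already gives a nonzero section of $S^4E$; hence $\Hom(L^{-1},S^4E)=H^0(S^4E\otimes M)=0$, and $S^4E$ is not destabilized by $L^{-1}$ (it is destabilized by $\Oo_C$ instead). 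What the $k=3$ converse actually requires is the vanishing $\Hom(L^{\mp1},S^2E)=0$, e.g.\ stability of $S^{k-1}E=S^2E$; and indeed, each time the paper invokes this converse at $k=3$ (in the proof of Proposition \ref{last} and in the derivation of Corollary \ref{clear_form}) it assumes $S^2E$ stable for exactly this reason. For $k=2$, where stability of $E$ kills $\Hom(E,L)$ directly, and for $k\geq4$, via your bootstrap through level $k-2$, your proof of the converse is complete.
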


\begin{proof}
Let $X=\PP_C(E)$ be the ruled surface $\Pi:\PP_C(E)\to C$ and $C_1$ be a unisecant divisor on $X$ satisfying $\Pi_*\Oo_X(C_1)=E$.
Since the natural morphism $\Pi^* E\to\Oo_C(C_1)$ is surjective, we have the following exact sequence on $X$.
\vspace{-0.5em}
\[
0\to \Oo_X(-C_1)\otimes \Pi^*\det E\to \Pi^*E\to \Oo_X(C_1)\to 0
\]
By pushing forward the sequence after twisting by $\Oo_X(kC_1)$ for $k\geq 0$, we obtain the following exact sequence on $C$.
\vspace{-0.5em}
\[\label{basic}\tag{3.1}
0\to S^{k-1}E\otimes \det E\to S^kE\otimes E\to S^{k+1}E\to 0
\]
Then, taking the dual of the sequence and twisting by $L\otimes\det E$, we have the following exact sequences.
\begin{gather*}
0
\to (S^{k+1} E)^\vee
\to (S^k E)^\vee\otimes E^\vee 
\to (S^{k-1}E)^\vee\otimes (\det E)^{-1}
\to 0\\
0
\to (S^{k+1}E)^\vee\otimes(L\otimes\det E)
\to (S^kE)^\vee\otimes (E\otimes L)
\to (S^{k-1} E)^\vee\otimes L
\to 0
\end{gather*}
By taking global sections, we get the first exact sequence of the statement.
Next, applying $E^\vee\cong E\otimes (\det E)^{-1}$ to the last sequence and twisting by $(\det E)^{k-1}$, we have the following exact sequences.
\begin{gather*}
0
\to S^{k+1}E\otimes (\det E)^{-k}\otimes L
\to S^kE\otimes (\det E)^{-(k-1)} \otimes E^\vee \otimes L
\to S^{k-1} E\otimes (\det E)^{-(k-1)}\otimes L
\to 0\\
0
\to (L^{-1}\otimes \det E)^\vee\otimes S^{k+1}E
\to (E\otimes L^{-1})^\vee\otimes S^kE
\to (L^{-1})^\vee\otimes S^{k-1}E
\to 0
\end{gather*}
By taking global sections, we obtain the second exact sequence of the statement.
\end{proof}

This fact means that if $S^{k+1}E$ is destabilized by a line bundle, then it has implications for the stability of $S^k E$.
Later, when $E$ has degree $0$, using an exact sequence (see Lemma \ref{generalization}) which generalizes \eqref{basic}, we can deduce that if $S^{k+1}E$ is destabilized by a line subbundle, then, for all $\frac{k+1}{2}<m\leq k$, $S^m E$ is not stable.\linebreak
In the opposite direction, we can show the following.

\begin{Prop}\label{tolarge}
Let $E$ be a stable vector bundle on $C$ of rank $2$ and degree $0$.
If $S^k E$ is destabilized by\linebreak a line subbundle $L^{-1}\to S^k E$, then $S^l E$ is destabilized by a subbundle $S^{l-k}E\otimes L^{-1}\to S^l E$ for all $l\geq k$.
\end{Prop}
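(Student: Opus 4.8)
The plan is to exhibit the claimed destabilizing subbundle explicitly, by multiplying the given section of $S^kE$ into $S^{l-k}E$. Write the destabilizing line subbundle $L^{-1}\to S^kE$ as a section $s\in H^0(S^kE\otimes L)$; since a subbundle is saturated (its quotient is locally free), this $s$ is nowhere vanishing. I would then form the composite
\[
\phi:\ S^{l-k}E\otimes L^{-1}\xrightarrow{\ \mathrm{id}\otimes s\ } S^{l-k}E\otimes S^kE\xrightarrow{\ m\ } S^lE,
\]
where $m$ is the natural symmetric multiplication map, and argue that $\phi$ realizes $S^{l-k}E\otimes L^{-1}$ as a destabilizing subbundle of $S^lE$.

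First I would record the numerics. Since $E$ is stable of rank $2$ and degree $0$, every symmetric power $S^mE$ is semi-stable of slope $0$ by the theorem of Hartshorne quoted in the introduction; the destabilizing subbundle therefore forces $\deg L=0$. Consequently $\deg(S^{l-k}E\otimes L^{-1})=\deg S^{l-k}E-(l-k+1)\deg L=0$ (both terms vanish, the first because $\deg E=0$), so $\mu(S^{l-k}E\otimes L^{-1})=0=\mu(S^lE)$. As $l\geq k$, the rank satisfies $1\leq l-k+1\leq l<l+1=\rk S^lE$, so $S^{l-k}E\otimes L^{-1}$ is a proper nonzero subbundle of exactly the right slope. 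Thus, once $\phi$ is shown to be a subbundle, $S^lE$ is destabilized as claimed.

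The step I expect to be the main obstacle is verifying that $\phi$ is genuinely a subbundle—injective with locally free quotient—rather than merely a nonzero map of sheaves. I would check this fiberwise: at a point $P\in C$ choose a basis of $E_P$, so that the fiberwise symmetric algebra $\bigoplus_{m\geq 0}S^mE_P$ is identified with the polynomial ring $\CC[x,y]$, under which $m$ becomes polynomial multiplication and $s_P\in S^kE_P$ becomes a nonzero homogeneous form of degree $k$ (nonzero precisely because $s$ is nowhere vanishing). Then $\phi_P$ is multiplication by $s_P$ from degree $l-k$ forms to degree $l$ forms, which is injective since $\CC[x,y]$ is an integral domain. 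Hence $\phi$ is injective on every fiber, so it is a subbundle with locally free quotient. It is exactly here that $\rk E=2$ enters: the fiberwise symmetric algebra is a polynomial ring in \emph{two} variables, a domain, so multiplication by a nonzero form is injective.

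If one only wished to conclude that $S^lE$ is not stable, a softer route would suffice: $S^{l-k}E\otimes L^{-1}$ is semi-stable of slope $0$, so the image of the nonzero map $\phi$ is a quotient of it and hence has slope $\geq 0$, and its saturation is then a proper subbundle of slope $\geq 0=\mu(S^lE)$. I would nonetheless prefer the fiberwise injectivity argument, since it yields the precise subbundle $S^{l-k}E\otimes L^{-1}\to S^lE$ named in the statement directly.
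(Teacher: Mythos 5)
Your proof is correct, but it verifies the key point by a genuinely different mechanism than the paper. The paper argues on the ruled surface: the line subbundle $L^{-1}\to S^kE$ corresponds to a $k$-section $D\sim kC_1+\bb f$ on $X=\PP_C(E)$, and pushing forward the ideal-sheaf sequence
\[
0\to\Oo_X((l-k)C_1-\bb f)\to \Oo_X(lC_1)\to \Oo_D(lC_1)\to 0
\]
to $C$ yields
\[
0\to S^{l-k}E\otimes L^{-1}\to S^lE\to \pi_*\Oo_D(lC_1)\to 0,
\]
followed by the same degree count you make. The underlying map is in fact the same in both proofs: the first arrow above is multiplication by the section of $\Oo_X(kC_1+\bb f)$ cutting out $D$, and its pushforward is precisely your composite $m\circ(\mathrm{id}\otimes s)$. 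What differs is how one sees that this map is a subbundle inclusion. The paper reads it off from the geometry, since the quotient is the pushforward of a line bundle on $D$ under the finite covering $\pi=\Pi|_D$ and is therefore locally free; you instead check fiberwise injectivity directly, using that the symmetric algebra of the fiber $E_P$ is an integral domain. One small correction to your closing remark: rank $2$ is not what makes that step work, since the symmetric algebra of a vector space of any dimension is a polynomial ring and hence a domain; so your argument actually proves the analogous statement for a stable bundle $E$ of arbitrary rank and degree $0$, which the paper's rank-$2$ ruled-surface formulation does not address. What your route buys is an elementary, self-contained proof on $C$ that bypasses the ruled-surface dictionary altogether; what the paper's route buys is the explicit identification of the quotient as $\pi_*\Oo_D(lC_1)$, which is not needed for this proposition but is exactly the ingredient reused in the later classification arguments (for instance in the proofs of Theorem \ref{classification_rank_one} and Proposition \ref{sumup}).
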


\begin{proof}
If there is a line subbundle $L^{-1}\to S^k E$ of degree $0$, then there exists a $k$-section $D\sim kC_1+\bb f$ on $X$ for $L=\Oo_C(\bb)$ with projection $\pi=\Pi_D|:D\to C$.
Thus it gives the exact sequence
\[
0\to\Oo_X((l-k)C_1-\bb f)\to \Oo_X(lC_1)\to \Oo_D(lC_1)\to 0
\]
on $X$, and by pushing forward the sequence, we have the following exact sequence on $C$.
\[
0\to S^{l-k}E\otimes L^{-1}\to S^lE\to \pi_*\Oo_C(lC_1)|_D\to 0
\]
Because $\deg(S^{l-k}E\otimes L^{-1})=\deg(S^lE)=0$, $S^{l-k}E\otimes L^{-1}$ destabilizes $S^l E$.
\end{proof}

If $S^3 E$ is destabilized by a line subbundle, then $S^2 E$ is not stable by Proposition \ref{tosmall}, so $S^2 E$ has\linebreak a destabilizing line subbundle (see Remark \ref{first}).
Conversely, if $S^2 E$ is destabilized by a line subbundle, then $S^3 E$ is not stable by Proposition \ref{tolarge}, but we do not know whether $S^3 E$ has a destabilizing subbundle.
The following tells when $S^3 E$ is destabilized by a line subbundle.

\begin{Thm}\label{classification_rank_one}
Let $E$ be a stable vector bundle on $C$ of rank $2$ with trivial determinant.
If $S^3 E$ is destabilized by a line subbundle $L^{-1}\to S^3 E$, then $L\in J_4(C)$ and $E=(\pi_*R)\otimes L$ for some $R\in\Pr(B/C)$ with $R\in J_6(B)$ where $\pi:B\to C$ is the unramified double covering corresponding to $L^2\in J_2(C)$.
\end{Thm}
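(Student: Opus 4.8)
The plan is to bypass the triple-section on $X=\PP_C(E)$ altogether and work through the orthogonal structure of $E$ together with its associated double covering, reducing the whole statement to a diagonalization of $S^3$ of a direct image on $B$.

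First I would extract the two-torsion. Since $E$ is semi-stable of degree $0$, the subbundle $L^{-1}\to S^3E$ forces $\deg L=0$. Proposition \ref{tosmall} then produces an inclusion $E\otimes L^{-1}\to S^2E$ of degree $0$, which is automatically saturated (its saturation has degree $\geq 0$, while semi-stability of $S^2E$ forces degree $\leq 0$), so the quotient is a genuine line bundle $Q$ of degree $0$. Comparing determinants in $0\to E\otimes L^{-1}\to S^2E\to Q\to 0$, using $\det S^2E=\Oo_C$ and $\det(E\otimes L^{-1})=L^{-2}$, identifies $Q=L^2$. By Remark \ref{twotor} the surjection $S^2E\to Q$ exhibits $E$ as orthogonal with values in $M:=Q=L^2$, so $M\in J_2(C)$ and hence $L\in J_4(C)$; Remark \ref{sss} rules out $M=\Oo_C$ for a stable $E$, giving $M=L^2\in J_2(C)\setminus\{\Oo_C\}$. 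This already settles $L\in J_4(C)$ and pins down the covering as the one attached to $L^2$.

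Next, Proposition \ref{dim} (Mumford's classification) writes $E\otimes A=\pi_*R_0$ for the unramified double covering $\pi:B\to C$ corresponding to $M=L^2$, a line bundle $A$ with $A^2=M$, and $R_0\in\Pr(B/C)$, so that $\iota^*R_0=R_0^{-1}$ for the involution $\iota$. Then $S^3E=S^3(\pi_*R_0)\otimes A^{-3}$, and the destabilizing $L^{-1}\to S^3E$ becomes a degree-$0$ line subbundle $\Lambda^{-1}\to S^3(\pi_*R_0)$ with $\Lambda=L\otimes A^{-3}\in J^0(C)$. Pulling back along $\pi$ and using $\pi^*\pi_*R_0=R_0\oplus\iota^*R_0=R_0\oplus R_0^{-1}$, one gets $\pi^*S^3(\pi_*R_0)=R_0^3\oplus R_0\oplus R_0^{-1}\oplus R_0^{-3}$. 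A nonzero map from the degree-$0$ bundle $\pi^*\Lambda^{-1}$ into this sum of degree-$0$ line bundles must be an isomorphism onto one summand, so $\pi^*\Lambda\cong R_0^{e}$ for some $e\in\{\pm1,\pm3\}$.

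The conceptual heart is then the torsion: because $\Lambda$ is pulled back from $C$, the bundle $\pi^*\Lambda$ is $\iota$-invariant, whence $R_0^{e}=\iota^*R_0^{e}=R_0^{-e}$, i.e. $R_0^{2e}=\Oo_B$, and since $e\in\{\pm1,\pm3\}$ this gives $R_0^6=\Oo_B$, so $R_0\in\Pr(B/C)\cap J_6(B)$. Finally I would rewrite $E=\pi_*R_0\otimes A^{-1}$ in the required shape: as $A^2=L^2$ we have $A=L\otimes\nu$ with $\nu\in J_2(C)$, and the projection formula with $\pi^*M=\Oo_B$ gives $E=\pi_*R\otimes L$ for $R:=R_0\otimes\pi^*\nu^{-1}$, where $\Nm_{B/C}(R)=\Oo_C$ and $R^6=\Oo_B$, so $R\in\Pr(B/C)\cap J_6(B)$. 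I expect the main obstacle to be the bookkeeping at the two ends rather than the covering computation: confirming that $E\otimes L^{-1}$ is already saturated in $S^2E$ (so that $Q=L^2$ is legitimate), and carefully absorbing the residual two-torsion ambiguity $A=L\otimes\nu$ into the choice of $R$ while keeping $R$ in $\Pr(B/C)\cap J_6(B)$. The torsion statement $R_0^6=\Oo_B$ itself drops out cleanly once the symmetric power is diagonalized on $B$ and $\iota$-invariance is imposed.
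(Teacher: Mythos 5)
Your argument is correct, and after the common opening it takes a genuinely different route from the paper's. Both proofs begin identically: Proposition \ref{tosmall} converts $L^{-1}\to S^3E$ into a destabilizing $E\otimes L^{-1}\to S^2E$, the quotient is identified as $L^2$ by saturation and determinants, and Remark \ref{twotor} gives $L\in J_4(C)$. From there the paper stays on the ruled surface $X=\PP_C(E)$: the subbundle $L^{-2}\to S^2E$ yields a bisection $B\sim 2C_1+2\bb f$, pushing forward $0\to\Oo_X(C_1-2\bb f)\to\Oo_X(3C_1)\to\Oo_B(3C_1)\to 0$ exhibits $S^3E$ as an extension of $\pi_*\Oo_B(3C_1)$ by $E\otimes L^{-2}$, stability of $E$ forces $L^{-1}$ to map nontrivially into $\pi_*\Oo_B(3C_1)$, and then Proposition \ref{img} together with Proposition \ref{intersection} converts the non-stability of $\pi_*\Oo_B(3C_1-3\bb f)$ into $2$-torsion of $\Oo_B(3C_1-3\bb f)$, hence $6$-torsion of $R=\Oo_B(C_1-\bb f)$; crucially, the identification $E\otimes L^{-1}=\pi_*\Oo_B(C_1-\bb f)$ produces the twist by $L$ on the nose. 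You instead invoke Mumford's classification (through the argument in the proof of Proposition \ref{dim}, applied to the specific value $M=L^2$ rather than its bare statement, which only asserts existence of some covering) to write $E\otimes A=\pi_*R_0$ with $A^2=L^2$, pull everything back to $B$, diagonalize $\pi^*S^3(\pi_*R_0)=R_0^3\oplus R_0\oplus R_0^{-1}\oplus R_0^{-3}$ using the \'etale splitting $\pi^*\pi_*R_0\cong R_0\oplus\iota^*R_0$ (a fact the paper itself uses in Section 4.2), match $\pi^*\Lambda^{-1}$ with a summand, and let $\iota$-invariance of a pullback force $R_0^{2e}=\Oo_B$. This makes the torsion mechanism quite transparent and avoids all ruled-surface geometry after Remark \ref{twotor} (no bisection, no appeal to Remark \ref{interesting}); the price is the closing bookkeeping, which you handle correctly: the two-torsion ambiguity $A=L\otimes\nu$ is absorbed into $R:=R_0\otimes\pi^*\nu^{-1}$ via the projection formula and $\pi^*L^2=\Oo_B$, and $R$ remains in $\Pr(B/C)\cap J_6(B)$. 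One incidental remark: your case $e=\pm1$ cannot actually occur, since $R_0^2=\Oo_B$ would give $R_0\in\Pr(B/C)\cap J_2(B)=\pi^*J_2(C)$ by Proposition \ref{intersection}, making $\pi_*R_0$ strictly semi-stable by Proposition \ref{img} and contradicting stability of $E$; but this refinement is unnecessary for the stated conclusion because $J_2(B)\subseteq J_6(B)$.
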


\newgeometry{top=80pt,bottom=80pt,left=80pt,right=80pt,footskip=30pt}
\begin{proof}
Assume that $S^3 E$ is destabilized by a line subbundle $L^{-1}\to S^3 E$ of $\deg L=0$.
Then $S^2 E$ is destabilized by the subbundle $E\otimes L^{-1}\to S^2 E$ by Proposition \ref{tosmall}.
Completing the quotient by comparing the determinants, we have the following exact sequence.
\[
0\to E\otimes L^{-1}\to S^2 E\to L^2\to 0
\]
We can observe from Remark \ref{twotor} with the surjection $S^2 E\to L^2$ that $L^2\in J_2(C)$.
Notice that the dual of the surjection, $L^{-2}\to (S^2 E)^\vee\cong S^2 E$, yields a bisection $B\sim 2C_1+2\bb f$ on $X=\PP_C(E)$ for $L=\Oo_C(\bb)$. Let $\pi=\Pi|_B:B\to C$ be the induced unramified double covering.
By pushing forward the exact sequence
\[
0\to \Oo_X(C_1-2\bb f)\to \Oo_X(3C_1)\to \Oo_B(3C_1)\to 0
\]
on $X$, we obtain the following exact sequence on $C$.
\[
0\to E\otimes L^{-2}\to S^3 E\to \pi_*\Oo_B(3C_1)\to 0
\]
Since there cannot exist a nonzero morphism $L^{-1}\to E\otimes L^{-2}$ as $E$ is stable, the morphism $L^{-1}\to S^3 E$ induces a nonzero morphism $L^{-1}\to \pi_*\Oo_B(3C_1)$, and it implies that $\pi_*\Oo_B(3C_1)$ is not stable.

By pushing forward the exact sequence
\[
0\to \Oo_X(-C_1-3\bb f)\to\Oo_X(C_1-\bb f)\to\Oo_B(C_1-\bb f)\to 0
\]
on $X$ to $C$, we get $E\otimes L^{-1}=\Pi_*\Oo_X(C_1-\bb f)=\pi_*\Oo_B(C_1-\bb f)$.
Note that $E\otimes L^{-1}$ is an orthogonal bundle with values in $\Oo_C$ as there is a surjection $S^2(E\otimes L^{-1})\to \Oo_C$.
Then $\Oo_B(C_1-\bb f)\in\Pr(B/C)$ follows from Mumford's classification.

Because the stability of $\pi_*\Oo_B(3C_1-3\bb f)=(\pi_*\Oo_B(3C_1))\otimes \Oo_C(-3\bb)$ and $\pi_*\Oo_B(3C_1)$ are equivalent, and $\Oo_B(3C_1-3\bb f)\in\Pr(B/C)$, we can deduce that $\pi_*\Oo_B(3C_1-3\bb f)$ splits by Proposition \ref{img}.
Therefore, $\Oo_B(3C_1-3\bb f)$ is $2$-torsion, and hence $\Oo_B(C_1-\bb f)$ is $6$-torsion.
\end{proof}

We can verify the converse of the theorem as in the next remark.
That is, if $R\in\Pr(B/C)\cap J_6(B)$ and $E=\pi_*R$, then $S^3 E$ is destabilized by a line subbundle, but we need to exclude $2$-torsion $R$ for $E$ to be stable.
For all nontrivial $M\in J_2(C)$ and a line bundle $A$ with $A^2=M$, recall from Section 2 that the images of $\Phi_A:\Pr(B/C)\to\SU_C(2,\Oo_C)$ are the locus of $E$ with strictly semi-stable $S^2 E$.
Then, $E=\Phi_A(R)$ for some $R\in\Pr(B/C)\cap(J_6(B)\backslash J_2(B))$ if and only if $E$ is stable and $S^3 E$ is destabilized by a line subbundle. 
Since the number of choices of $M\in J_2(C)$ and $R\in J_6(B)$ are finite, we can conclude that there are only finitely many such $E$ in $\SU_C(2,\Oo_C)$.

\begin{Rmk}
Let $\pi: B\to C$ be an unramified double covering corresponding to $M\in J_2(C)\backslash\{\Oo_C\}$.\linebreak
If $k\geq3$, then we can find in the same manner that $E=\pi_*R$ is stable and $S^k E$ is destabilized by a line subbundle for $R\in\Pr(B/C)\cap (J_{2k}(B)\backslash J_2(B))$.
Let $X=\PP_C(E)$ be the ruled surface $\Pi:\PP_C(E)\to C$ and $C_1$ be a unisecant divisor on $X$ such that $\Pi_*\Oo_X(C_1)=E$.
As $S^2 E$ is destabilized by $\Oo_C$, $B$ realizes as a bisection $B\sim 2C_1$ on $X$, and then $E=\pi_*\Oo_B(C_1)$.
By pushing forward the exact sequence
\[
0\to\Oo_X((k-2)C_1)\to \Oo_X(kC_1)\to \Oo_B(kC_1)\to 0
\]
on $X$, we obtain the following exact sequence on $C$.
\[
0\to S^{k-2}E\to S^k E\to \pi_*\Oo_B(kC_1)\to 0
\]
Notice that $E$ is orthogonal with values in $\Oo_C$.
Thus $\Oo_B(C_1)\in\Pr(B/C)$, and so $\Oo_B(kC_1)\in\Pr(B/C)$.
Hence $\Oo_B(kC_1)\in J_2(B)$ is equivalent to saying that $\pi_*\Oo_B(kC_1)$ splits into the direct sum of line bundles by Proposition \ref{img}.
Therefore, $S^k E$ is destabilized by a line 
(sub)bundle when $\Oo_B(C_1)$ is $2k$-torsion.
\end{Rmk}

\newgeometry{top=65pt,bottom=80pt,left=80pt,right=80pt,footskip=30pt}
\subsection{Destabilized by Rank 2 but not by Rank 1}

In Proposition \ref{tolarge}, we observe that if $S^2 E$ is strictly semi-stable, then $S^3 E$ is destabilized by a subbundle of rank $2$.
The converse does not hold as there is an example where $S^2 E$ is stable but $S^4 E$ is destabilized by a line subbundle.
By Proposition \ref{tosmall},\linebreak if $S^4 E$ is destabilized by a line subbundle, then $S^3 E$ is destabilized by a subbundle of rank $2$.

\begin{Ex}\label{S3}
Let $\eta:C'\to C$ be an unramified cyclic triple covering corresponding to $L\in J_3(C)\backslash\{\Oo_C\}$.
Then we have $\eta_*\Oo_{C'}=\Oo_C\oplus L^{-1}\oplus L^{-2}$.
For $M\in J_2(C')$, $V=\eta_*M$ becomes an orthogonal bundle on $C$\linebreak of rank $3$.
Indeed, the surjection
\[
\eta^*S^2 V
=S^2(\eta^* V)
=S^2(\eta^*(\eta_*M))
\to M^2
=\Oo_{C'}
\]
induces a nonzero morphism $S^2 V\to \eta_*\Oo_{C'}=\Oo_C\oplus L^{-1}\oplus L^{-2}$, and it implies that $V$ is orthogonal because
one of $S^2 V\to L^{-k}$ is nonzero which is necessarily surjective due to the semi-stability of $S^2 V$.
In fact, there are surjections $S^2 V\to L^{-k}$ for all $k$ (mod $3$);
if there is a surjection $S^2V\to L^{-k}$ for one $k$,\linebreak then the isomorphism $V\otimes L=\eta_*M\otimes L\cong \eta_*(M\otimes \eta^*L)\cong \eta_*M=V$ induces another surjection $S^2 V\to L^{-k+1}$ after twisting the surjection $S^2V\otimes L^2\cong S^2 (V\otimes L)\cong S^2V\to L^{-k}$ by $L$.

According to Mumford's classification \cite{Mumford71} of orthogonal bundles of rank $3$, $V\cong A^{-1}\otimes S^2 E$ for some vector bundle $E$ of rank $2$ and line bundle $A$ satisfying $A^2=(\det E)^2$.
We will first check that $S^2 E$ is stable if and only if $M\not\in \eta^*J^0(C)$, and then show that $S^4 E$ has a destabilizing quotient line bundle.

If $\eta_*M$ is destabilized by a line subbundle $R^{-1}\to \eta_*M$ of degree $0$, then there is a nonzero morphism $\eta^*R^{-1}\to M$.
Since both $M$ and $\eta^*R^{-1}$ have degree $0$, it is possible if and only if $M=\eta^*R^{-1}\in \eta^*J^0(C)$.

Next, to show that $S^4 E$ is destabilized by the quotient line bundles $S^4 E\to A^2\otimes L^{\pm 1}$, we will use the following exact sequence obtained by completing the kernel of the natural surjection $S^2(S^2 E)\to S^4 E$ after comparing the determinants.
\vspace{-0.3em}
\[
0\to (\det E)^2\to S^2 (S^2 E)\to S^4 E\to 0
\]
Twisting the sequence by $A^{-2}\cong (\det E)^2$, we have the following exact sequence.
\[
0\to\Oo_C\to S^2(A^{-1}\otimes S^2 E)\to S^4 E\otimes A^{-2}\to 0
\vspace{-0.2em}
\]
Because there exist the surjections $S^2 V\to L^{\pm 1}$ and $L\neq \Oo_C$,
the above sequence with $V\cong A^{-1}\otimes S^2 E$ induces surjections $S^4 E\otimes A^{-2}\to L^{\pm 1}$.
That is, $S^4 E$ has destabilizing quotient bundles $A^2\otimes L^{\pm 1}$.
\end{Ex}

Note that the number of $E$ given by the above construction is finite since there are only finitely many $m$-torsion line bundles for each $m$ and vector bundles $E$ with $\det E=\det F$ which satisfy $S^2 E=S^2 F$ for a fixed vector bundle $F$ of rank $2$ with even degree when $S^2 F$ is stable.
As the first fact is well-known, we give a proof for the latter fact.

\begin{Prop}\label{fundamental_theorem_of_square_tensor}
Let $E$, $F$ be vector bundles on $C$ of rank $2$ with the same determinant of even degree.
If $S^2 E$, $S^2 F$ are stable and $S^2E\cong S^2 F$, then $E\cong F\otimes M$ for some $2$-torsion line bundle $M$.
\end{Prop}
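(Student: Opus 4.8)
The plan is to reduce to trivial determinant and then recover the ruled surface $\PP_C(E)$ intrinsically from $S^2 E$ together with a canonical orthogonal structure. First I would normalize: choosing a line bundle $N$ with $N^2=\det E=\det F$ (possible since $\deg\det E$ is even) and replacing $E,F$ by $E\otimes N^{-1},\,F\otimes N^{-1}$, I may assume $\det E=\det F=\Oo_C$; this preserves stability and the isomorphism $S^2E\cong S^2F$, and a conclusion $E\cong F\otimes M$ with $M\in J_2(C)$ after normalizing yields the same before normalizing. Under $\det E=\Oo_C$, the symplectic form $\wedge^2 E\cong\Oo_C$ induces a canonical nondegenerate symmetric bilinear form $q_E\colon S^2E\otimes S^2E\to\Oo_C$, so that $S^2E$ is an orthogonal bundle with values in $\Oo_C$ of rank $3$; concretely, under the identification of $S^2E$ with the traceless endomorphisms $\mathrm{End}_0(E)$ this is the determinant form $A\mapsto -\det A$. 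The key geometric point is that the second Veronese embeds $\PP_C(E)$ into $\PP_C(S^2E)$ as the fibrewise conic of pure squares $v\mapsto v^2$, and since $q_E(v^2)=0$ this conic coincides with the fibrewise isotropic conic of $q_E$. Thus $\PP_C(E)$ is recovered from the orthogonal bundle $(S^2E,q_E)$ as its isotropic conic bundle.

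The main obstacle is that the given isomorphism $\phi\colon S^2E\xrightarrow{\sim}S^2F$ need not respect the two orthogonal structures, so a priori it need not carry the conic of $E$ to the conic of $F$. This is exactly where stability enters. Since $S^2E$ is stable it is simple, and $(S^2E)^\vee\cong S^2(E^\vee)\cong S^2E$, so $\Hom(S^2E,(S^2E)^\vee)\cong\Hom(S^2E,S^2E)\cong\CC$; hence any two nondegenerate symmetric forms on $S^2E$ agree up to a scalar. Applying this to $q_E$ and the pulled-back form $\phi^*q_F$ gives $\phi^*q_F=\lambda q_E$ for some $\lambda\in\CC^\ast$, and replacing $\phi$ by $\lambda^{-1/2}\phi$ I may assume $\phi$ is an isometry $(S^2E,q_E)\to(S^2F,q_F)$.

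It remains to descend. An isometry of orthogonal bundles carries isotropic conics to isotropic conics and is $\Oo_C$-linear, hence commutes with the projections to $C$; by the first paragraph it therefore induces an isomorphism $\PP_C(E)\cong\PP_C(F)$ of ruled surfaces over $C$. This forces $E\cong F\otimes L$ for some $L\in\Pic(C)$, and comparing determinants gives $\Oo_C=\det E=\det(F\otimes L)=L^2$, so $L$ is $2$-torsion. Undoing the normalization yields $E\cong F\otimes M$ with $M\in J_2(C)$, as desired. I expect the substantive steps to be the construction of $q_E$ and the verification that its isotropic conic is the Veronese (a fibrewise $\mathrm{SL}_2$-invariance computation), together with the rescaling argument that promotes $\phi$ to an isometry; the passage from $\PP_C(E)\cong\PP_C(F)$ to a twist $E\cong F\otimes L$ is standard.
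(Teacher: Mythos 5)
Your proof is correct, but it takes a genuinely different route from the paper's. The paper never touches the geometry of $\PP_C(E)$ here: it tensors the two bundles together, computes $\dim\Hom(E\otimes F,E\otimes F)=\dim\Hom(E\otimes E,F\otimes F)\geq \dim\Hom(\Oo_C,\Oo_C)+\dim\Hom(S^2E,S^2F)=2$ using $E\otimes E\cong \Oo_C\oplus S^2E$, concludes that $E\otimes F$ is not simple hence not stable, and then splits into cases according to the rank of a degree-$0$ destabilizing subbundle $V\to E\otimes F$ (rank at most $2$ by Remark \ref{first}): rank $1$ gives $E\cong E^\vee\cong F\otimes V^{-1}$ with $V^{-1}$ necessarily $2$-torsion, while rank $2$ is eliminated by a lengthy contradiction argument (orthogonality of the sub- and quotient bundles with values in $\Oo_C$, Remark \ref{sss}, and an $h^0$ count). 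You instead recover the ruled surface intrinsically from $S^2E$: with trivial determinant, $S^2E\cong\mathrm{End}_0(E)$ carries the canonical nondegenerate determinant form $q_E$, whose fibrewise isotropic conic is exactly the Veronese image of $\PP_C(E)$ (nonzero isotropic vectors are the nilpotent traceless endomorphisms, i.e.\ the pure squares); simplicity of the stable bundle $S^2E$ makes $\Hom(S^2E,(S^2E)^\vee)$ one-dimensional, so the given isomorphism can be rescaled to an isometry, which then identifies the two conic bundles, giving $\PP_C(E)\cong\PP_C(F)$ over $C$ and hence $E\cong F\otimes L$ with $L^2=\Oo_C$. Your argument is shorter, avoids the paper's delicate rank-$2$ exclusion entirely, and in fact uses strictly weaker hypotheses: only simplicity of $S^2E$ enters, while stability of $E$, $F$, and $S^2F$ is never invoked. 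What the paper's proof buys is self-containedness within its own toolkit of Hom-counting for semi-stable bundles, whereas your proof leans on two outside (standard, but citable) facts that a complete write-up should verify: the recognition lemma that an isomorphism $\PP_C(E)\cong\PP_C(F)$ of projective bundles over $C$ forces $E\cong F\otimes L$ for some line bundle $L$, and the identification of the isotropic conic of $q_E$ with the Veronese image, which is the fibrewise $\mathfrak{sl}_2$ computation you indicate.
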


\begin{proof}
We may give a proof under the assumption that $\det E=\det F=\Oo_C$.
Notice that $E$, $F$ are stable because $S^2 E$, $S^2 F$ are stable.
Due to $E\otimes E=\Oo_C\oplus S^2 E$, $E^\vee\cong E$, and the same facts for $F$, we have
\begin{align*}
\dim\Hom(E\otimes F,E\otimes F)
&=h^0(E^\vee\otimes F^\vee\otimes E\otimes F)
=\dim\Hom(E\otimes E,F\otimes F)\\
&\geq \dim\Hom(\Oo_C,\Oo_C)
+\dim\Hom(S^2 E,S^2 F)
=2.
\end{align*}
Thus $E\otimes F$ is not simple, so not stable, hence there exists a subbundle $V\to E\otimes F$ of rank $r\leq 2$ and degree $0$ by Remark \ref{first}.
If $r=1$, then it induces an isomorphism $E\cong E^\vee\cong F\otimes V^{-1}$ as $E$ and $F$ are stable.
After comparing the determinants, we can check that $M^2=\det (F\otimes V^{-1})=\det E=\Oo_C$ for $M=V^{-1}$ which gives an isomorphism $E\cong F\otimes M$.

\begin{center}\vspace{25pt}\end{center}
Otherwise, if there is no destabilizing line subbundle of $E\otimes F$ so that $r=2$, then we can derive\linebreak a contradiction as follows.
Suppose that there is an exact sequence
\[
0\to V\to E\otimes F\to W\to 0
\]
for some vector bundles $V$ and $W$ of rank $2$ and degree $0$.
If $W$ is not stable so there exists a destabilizing quotient line bundle $W\to L$, then it gives a destabilizing line subbundle $L^{-1}\to W^\vee\to (E\otimes F)^\vee\cong E\otimes F$.
If $V$ is not stable, then we have the same contradiction.
So we assume that both $V$ and $W$ are stable.
Then, using the filtration
\[
S^2(E\otimes F)=G_0\supseteq G_1\supseteq G_2\supseteq G_3=0
\]
satisfying $G_0/G_1=S^2 W$, $G_1/G_2=V\otimes W$, and $G_2/G_3=S^2 V$, we obtain that
\[
h^0(S^2 (E\otimes F))\leq h^0(S^2V)+h^0(S^2 W)+h^0(V\otimes W).
\]
On the other hand, by the isomorphism
\[
S^2(E\otimes F)
\cong(\det E\otimes \det F)\oplus (S^2E\otimes S^2 F)
=\Oo_C\oplus (S^2 E\otimes S^2 F),
\]
we have $h^0(S^2(E\otimes F))=h^0(\Oo_C)+\dim \Hom((S^2 E)^\vee, S^2 F)=1+\dim\Hom(S^2 E, S^2 F)=2$, and therefore,
\[
h^0(S^2 V)+h^0(S^2 W)+h^0(V\otimes W)\geq 2.
\]

We will first show that $h^0(S^2 V)=h^0(S^2 W)=1$.
That is, $V$ and $W$ are orthogonal with values in $\Oo_C$.\linebreak
Since $V\otimes V=\det V\oplus S^2 V$ and $V$ is stable, $h^0(S^2 V)\leq h^0(V\otimes V)=\dim\Hom(V^\vee,V)\leq 1$.
For the same reason, $h^0(S^2 W)\leq 1$.
Thus it suffices to treat the case $h^0(V\otimes W)\neq 0$.
Because both $V$ and $W$ are stable, $h^0(V\otimes W)=\dim\Hom(V^\vee,W)\leq 1$, and the equality holds if and only if $W\cong V^\vee$.
Hence if $h^0(V\otimes W)\neq 0$, then $h^0(V\otimes W)=1$, so we have either $h^0(S^2 V)=1$ or $h^0(S^2 W)= 1$ from $h^0(S^2 V)+h^0(S^2 W)\geq 1$.
Without loss of generality, we may assume that $h^0(S^2 V)= 1$, equivalently, there is a symmetric isomorphism $V^\vee\cong V$.
Then $W\cong V^\vee\cong V$ is true due to $h^0(V\otimes W)\neq 0$.
Thus we get $S^2 W\cong S^2 V$ as well, and so $h^0(S^2 W)=h^0(S^2 V)=1$.

Now suppose that $V$ and $W$ are orthogonal with values in $\Oo_C$.
By Remark \ref{sss}, we know that $\det V$ and $\det W$ are nontrivial $2$-torsion.
From $\det V\otimes \det W=\det(E\otimes F)=\Oo_C$, we also have $\det V=\det W$, and denote them by $N=\det V=\det W\in J_2(C)$.
Then, from $V\otimes N\cong V^\vee\cong V$ and $W\otimes N\cong W^\vee\cong W$, we obtain that $(E\otimes F)\otimes N\cong E\otimes F$ since $E\otimes F$ is an extension of $W$ by $V$.
Thus
\begin{align*}
(E\otimes F)\otimes (E\otimes F)
&\cong(E\otimes F)\otimes ((E\otimes F)\otimes N)
=((E\otimes E)\otimes (F\otimes F))\otimes N\\
&=((\Oo_C\oplus S^2 E)\otimes (\Oo_C\oplus  S^2 F))\otimes N
=(\Oo_C\oplus S^2 E\oplus S^2 F\oplus (S^2 E\otimes S^2 F))\otimes N\\
&=N\oplus (S^2 E\otimes N)\oplus (S^2 F\otimes N)\oplus (S^2 E\otimes S^2 F\otimes N)
\end{align*}
implies that $h^0(N)+h^0(S^2 E\otimes N)+h^0(S^2 F\otimes N)+h^0(S^2 E\otimes S^2 F\otimes N)=\dim\Hom(E\otimes F,E\otimes F)=2$.

Because $S^2 E\cong S^2 F$ are stable, $h^0(S^2 E\otimes N)=h^0(S^2 F\otimes N)=0$.
Moreover, due to their stability,
\[
h^0(S^2 E\otimes S^2 F\otimes N)
=\dim\Hom((S^2 E)^\vee,S^2 F\otimes N)
=\dim\Hom(S^2 E,S^2 F\otimes N)
\leq 1.
\]
Thus it follows that $h^0(N)=1$ which is equivalent to saying that $\det V=N=\Oo_C$.
Then it contradicts the inequality $h^0(\det V)+h^0(S^2 V)=h^0(V\otimes V)=\dim\Hom(V^\vee,V)\leq 1$ satisfied by the stability of $V$.\linebreak
Therefore, there must be a line subbundle $M^{-1}\to E\otimes F$ yielding an isomorphism $E\cong E^\vee\cong F\otimes M$.
\end{proof}

The remaining part of this subsection is devoted to showing that the vector bundles constructed in Example \ref{S3} are the only vector bundles $E$ where $S^2 E$ is stable but $S^3 E$ is not stable.
\restoregeometry

\begin{Lem}\label{integral}
Let $E$ be a vector bundle on $C$ of rank $2$ and degree $0$.
Assume that $S^k E$ is destabilized by a line subbundle so there exists a $k$-section $D$ of zero self-intersection on the ruled surface $X=\PP_C(E)$.
Then $D$ is irreducible and reduced if $S^m E$ is not destabilized by a line subbundle for any $m\leq \frac{k}{2}$, or equivalently, if there is no $m$-section of zero self-intersection on $X$ for any $m\leq \frac{k}{2}$ (see Remark \ref{interesting}).
\end{Lem}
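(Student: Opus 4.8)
The plan is to argue by contradiction, using the cone of curves to force any nontrivial decomposition of $D$ to contain a smaller section of zero self-intersection. First I would pin down that $E$ is semi-stable. Since the interesting cases have $k\ge 2$, the bound $m\le \tfrac{k}{2}$ includes $m=1$; the hypothesis then says that $S^1E=E$ admits no destabilizing line subbundle, i.e. $E$ has no line subbundle of non-negative degree, so $E$ is stable and in particular semi-stable. (The case $k=1$ is immediate: a $1$-section meets a general fibre in a single point and is a section.) Consequently $\overline{\NE}(X)=\mathcal{C}=\RR_{\ge 0}[f]+\RR_{\ge 0}[C_1]$, so every effective divisor on $X$ has numerical class $a[C_1]+b[f]$ with $a,b\ge 0$.

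Next, suppose $D$ is not irreducible and reduced, and split off a component to write $D=D_1+D_2$ with $D_1,D_2$ effective and nonzero (taking $D=\Gamma+(D-\Gamma)$ for an irreducible component $\Gamma\le D$ handles both the reducible and the non-reduced case uniformly). Writing $D_i\equiv k_iC_1+b_if$, the positivity of $\mathcal{C}$ gives $k_i\ge 0$ and $b_i\ge 0$. Since $D^2=2k\deg\bb=0$ forces $\deg\bb=0$, we have $D\equiv kC_1$, and comparing classes yields $k_1+k_2=k$ together with $b_1+b_2=0$; with $b_i\ge 0$ this forces $b_1=b_2=0$. A nonzero effective divisor cannot be numerically trivial, so each $k_i\ge 1$, and both $D_i$ are genuine multisections.

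Finally, $D_i^2=k_i^2C_1^2=0$ because $C_1^2=\deg E=0$, so each $D_i$ is a $k_i$-section of zero self-intersection. Ordering $k_1\le k_2$ gives $1\le k_1\le \tfrac{k}{2}$, and the correspondence turns $D_1\sim k_1C_1+\bb_1 f$ with $\deg\bb_1=0$ into a degree-$0$ line subbundle $\Oo_C(-\bb_1)\to S^{k_1}E$ that destabilizes $S^{k_1}E$ — contradicting the assumption that $S^mE$ is not destabilized by a line subbundle for any $m\le \tfrac{k}{2}$. Hence no such decomposition exists and $D$ is irreducible and reduced. I expect the genuine obstacle to be the opening step, securing semi-stability so that $\overline{\NE}(X)=\mathcal{C}$ is available: this is exactly what collapses the fibre coefficients to $b_1=b_2=0$, after which the self-intersection bookkeeping and the extraction of a smaller zero-self-intersection section are automatic.
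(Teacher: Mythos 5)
Your proof is correct and takes essentially the same route as the paper's: write $D=D_1+D_2$ with $D_i\equiv k_iC_1+b_if$ effective, use $\overline{\NE}(X)=\mathcal{C}$ (so $k_i,b_i\geq 0$) to force $b_1=b_2=0$, and extract a $k_i$-section of zero self-intersection with $k_i\leq \frac{k}{2}$, contradicting the hypothesis. The only real difference is that you make explicit the (semi-)stability of $E$, deduced from the $m=1$ instance of the hypothesis, which the paper's proof uses silently when it asserts the fibre coefficients $b,d\geq 0$.
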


\begin{proof}
We fix a unisecant divisor $C_1$ on $X$ satisfying ${C_1}^2=0$.
Then necessarily $D\equiv kC_1$.
If $D$ is neither irreducible nor reduced, then $D=D_1\cup D_2$ for some effective divisors $D_1\equiv mC_1+bf$ and $D_2\equiv nC_1+ df$ with $m+n=k$ for $m,\,n>0$ and $b+d=0$ for $b,\,d\geq 0$.
So we have $b=d=0$ and both $D_1$ and $D_2$ have zero self-intersection on $X$.
Thus it yields a contradiction as one of $m\leq \frac{k}{2}$ or $n \leq \frac{k}{2}$ must hold.
\end{proof}

\begin{Lem}\label{strong_and_concise}
Let $E$ be a vector bundle on $C$ of rank $2$ and degree $0$ and $X=\PP_C(E)$ be the ruled surface $\Pi: \PP_C(E)\to C$ with a unisecant divisor $C_1$ on $X$ satisfying $\Pi_*\Oo_X(C_1)\cong E$.
Assume that $k\geq 2$ and $S^m E$ is not destabilized by a line subbundle for any $m\leq k$.
If there exists an injection $V\to S^k E$ for some stable vector bundle $V$ on $C$ of rank $2$ and degree $0$, then it induces a surjection $\Pi^*V\to \Oo_X(kC_1)$~on~$X$.
\end{Lem}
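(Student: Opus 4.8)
The plan is to use adjunction to convert the inclusion $\iota\colon V\hookrightarrow S^kE=\Pi_*\Oo_X(kC_1)$ into a morphism $\phi\colon \Pi^*V\to\Oo_X(kC_1)$ on $X$, recovering $\iota$ as $\Pi_*\phi$ under the canonical identification $\Pi_*\Pi^*V\cong V$. What we want is the surjectivity of $\phi$. Since $X$ is a smooth surface, the image $\mathcal Q\subseteq\Oo_X(kC_1)$ is a rank-one torsion-free subsheaf, so its reflexive hull is a line bundle $\Oo_X(kC_1-G)$ for a unique effective divisor $G\geq 0$, and $\mathcal Q=\Oo_X(kC_1-G)\otimes\Ii_W$ for some zero-dimensional $W$. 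Thus the lemma reduces to proving $G=0$ and $W=\emptyset$. Note also that the hypothesis at $m=1$ (here $k\geq 2$) says $E$ has no line subbundle of degree $\geq 0$, so $E$ is stable and in particular $\overline{\NE}(X)=\RR_{\geq0}[f]+\RR_{\geq0}[C_1]$.

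First I would show that $G$ has no fibral component. If $G\geq Pf$ for some $P\in C$, then $\phi$ factors through $\Oo_X(kC_1)\otimes\Pi^*\Oo_C(-P)$, so pushing forward, $\iota$ factors through $S^kE\otimes\Oo_C(-P)$, a semistable bundle of slope $-1$; this is impossible because $V$ is stable of slope $0$ and its image would be a rank-two subsheaf of slope $0$ inside a semistable sheaf of slope $-1$. Hence $G$ is horizontal, and writing $G\sim aC_1+\Pi^*\bb$ with $a=G\cdot f$, effectivity together with the cone $\overline{\NE}(X)$ gives $\deg\bb\geq 0$; moreover, restricting $\Pi^*V\twoheadrightarrow\mathcal Q$ to a general fibre exhibits $\Oo_{\PP^1}(k-a)$ as a nonzero quotient of the trivial bundle $\Pi^*V|_{Pf}\cong\Oo_{\PP^1}^{\oplus 2}$, forcing $a\leq k$.

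The numerical heart is a Chern class identity. Because $\Pi^*V$ is pulled back from a curve, $c_1(\Pi^*V)\equiv 0$ and $c_2(\Pi^*V)=0$; applying this to $0\to M'\to\Pi^*V\to\mathcal Q\to 0$ (with $M'$ the saturated kernel) yields $(kC_1-G)^2=\operatorname{length}(W)\geq 0$, and since $C_1^2=\deg E=0$ one computes $(kC_1-G)^2=2(a-k)\deg\bb$. If $G=0$ this reads $\operatorname{length}(W)=0$, so $W=\emptyset$ and $\phi$ is surjective. If $1\leq a<k$, then $2(a-k)\deg\bb\geq 0$ with $a-k<0$ forces $\deg\bb=0$, making $G$ an $a$-section of zero self-intersection with $a\leq k$, which contradicts the hypothesis through the dictionary of Remark \ref{interesting}. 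Finally, if $a=k$ then $\Oo_X(kC_1-G)=\Pi^*\Oo_C(-\bb)$ is a pullback, and the nonzero map $\Pi^*V\to\Oo_X(kC_1-G)$ corresponds by adjunction to a nonzero map $V\to\Oo_C(-\bb)$ whose target has non-positive degree, contradicting the stability of $V$. The hard part, and the step I expect to require the most care, is exactly this bookkeeping of the fixed part $G$ and the point defect $W$: excising the fibral part of $G$ by a slope argument, and then turning its horizontal part into a genuine low-degree $a$-section to which the no-destabilizing-line-subbundle hypothesis can be applied.
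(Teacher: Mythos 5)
Your proof is correct, and its skeleton is the same as the paper's: adjunction to get $\phi\colon\Pi^*V\to\Oo_X(kC_1)$, the decomposition of the image as $\Oo_X(kC_1-G)\otimes\Ii_W$, the bound $a=G\cdot f\leq k$ from restricting to a general fibre, Whitney's formula with $c_1(\Pi^*V)\equiv 0$ and $c_2(\Pi^*V)=0$, and the final contradiction through the multisection/line-subbundle dictionary. Where you genuinely diverge is the key numerical step. The paper pins down $\deg\bb=0$ \emph{before} the Chern computation, by restricting the surjection $\Pi^*V\to\im\phi$ to smooth curves $D\equiv lC_1+cf$ with $c/l$ arbitrarily small (this needs such curves to exist near the boundary ray $[C_1]$ of the nef cone, and semistability of the pullback of $V$ to them); Whitney's formula, whose cross term then vanishes, yields $W=\emptyset$ only afterwards. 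You instead run the Chern computation with $\deg\bb$ unknown and exploit positivity of $\len(W)$: the identity $(kC_1-G)^2=\len(W)\geq 0$ reads $2(a-k)\deg\bb\geq 0$, which together with $\deg\bb\geq 0$ (effectivity plus the cone of curves) forces $\deg\bb=0$ whenever $a<k$, while the extreme case $a=k$ is killed by stability of $V$ alone, via a nonzero map $V\to\Oo_C(-\bb)$ to a bundle of non-positive degree. This is a real simplification --- it eliminates the paper's most delicate step --- and as a by-product your argument only ever uses the hypothesis for $m<k$, never at $m=k$ (the paper uses it there to exclude its case $m_{\mathrm{paper}}=0$, which corresponds to your $a=k$). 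One remark: the step you flagged as the hard part, excising the fibral components of $G$, is in fact redundant: a nonzero fibral part forces $\deg\bb>0$, which your own inequality rules out when $a<k$ and your stability argument rules out when $a=k$.
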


\begin{proof}
There is a nonzero morphism $\morph:\Pi^*V\to\Oo_X(kC_1)$ from the adjoint correspondence
\[
\Hom(\Pi^*V,\Oo_X(kC_1))
\cong\Hom(V,\Pi_*\Oo_X(kC_1))
=\Hom(V,S^kE).
\]
Since $\im \morph$ is a torsion-free sheaf on $X$ of rank $1$, we can write $\im \morph=\Oo_X(mC_1+\dd f)\otimes \Ii_Z$ for some integer $m$, $\dd\in\Pic(C)$, and $0$-dimensional subscheme $Z$ of $X$, which may be empty.
As the injection $\im \gamma\to \Oo_X(kC_1)$ induces a nonzero morphism $\Oo_X(mC_1+\dd f)\to\Oo_X(kC_1)$ between the reflexive hulls, $m\leq k$ and $\deg\dd\leq 0$ must hold.

Let $P\in C$ be arbitrary and $Pf=\Pi^{-1}(P)$ denote the $\PP^1$-fiber of $\Pi$ over $P$.
Since there is a trivialization $V|_U\cong \Oo_U\oplus \Oo_U$ in a neighborhood $U\subseteq C$ of $P\in C$, we get $\Pi^* V|_{Pf}\cong \Oo_{\PP^1}\oplus \Oo_{\PP^1}$.
Then, by restricting the surjection $\Pi^*V\to \im \morph$ onto $Pf$, we obtain a surjection $\Oo_{\PP^1}\oplus\Oo_{\PP^1}\to \Oo_{\PP^1}(m)$.
Since $\Oo_{\PP^1}(m)$ cannot be generated by global sections when $m<0$, we get $m\geq 0$.

On the other hand, by restricting the surjection $\Pi^*V\to \im \morph$ onto an $l$-section $D\sim lC_1+\cc f$ for some $\cc\in\Pic(C)$ of $\deg\cc=c$, we have a surjection $\Pi^*V|_D\to \Oo_X(mC_1+\dd f)\otimes\Ii_Z|_D$.
Because $\pi=\Pi|_D: D\to C$ is finite, $\Pi^*V|_D=\pi^*V$ is a semi-stable vector bundle on $D$ of degree $0$ \cite[II:~p.~61]{Lazarsfeld04}.
Thus
\[
0\leq \deg \Oo_X(mC_1+\dd f)\otimes\Ii_Z|_D\leq (mC_1+\dd f).(lC_1+\cc f)=mc+l\deg\dd.
\]
Since the $\RR$-ray generated by $[C_1]$ lies on the boundary of the cone of ample divisors in $N^1(X)$, we can choose a smooth curve $D\equiv lC_1+cf$ with $c/l>0$ being arbitrary small by taking sufficiently large $l\gg 0$.
So $\deg \dd$ is necessarily $0$.

Let $L=\Oo_C(\bb)=\det V$ and consider the exact sequence
\[
0
\to \Oo_X(nC_1+\ee f)\otimes \Ii_W
\to \Pi^*V
\to \Oo_X(mC_1+\dd f)\otimes\Ii_Z\to 0
\]
on $X$ for some integer $n$, $\ee\in\Pic(C)$, and $0$-dimensional subscheme $W$ of $X$, which is possibly empty.
Applying Whitney's formula for coherent sheaves to the sequence, we get
\[
c_1(\Pi^*V)=(mC_1+\dd f)+(nC_1+\ee f)
\ \ \text{and}\ \ 
c_2(\Pi^*V)=(mC_1+\dd f).(nC_1+\ee f) +\len(Z)+\len(W).
\]
Because $c_1(\Pi^*V)=\bb f$, $c_2(\Pi^*V)=0$, and ${C_1}^2=0$, we obtain $n=-m$, $\ee=\bb-\dd$, and $Z=W=\emptyset$.
Then, from the inclusion $\Oo_X(mC_1+\dd f)\to \Oo_X(kC_1)$, we get a global section of $\Oo_X((k-m)C_1-\dd f)$, which gives a $(k-m)$-section of zero self-intersection on $X$ if $m<k$, so it contradicts the assumption.
Therefore, $m=k$ and hence $\dd=0$, which together imply that $\gamma$ is surjective.
\end{proof}

\newgeometry{top=80pt,bottom=80pt,left=80pt,right=80pt,footskip=30pt}
\begin{Prop}\label{last}
Let $E$ be a vector bundle on $C$ of rank $2$ with trivial determinant.
Assume that $S^2 E$ is stable and $S^3 E$ is not destabilized by a line subbundle.
If $S^3E$ is destabilized by a subbundle $V\to S^3 E$ of rank $2$, then $L=\det V$ is nontrivial $3$-torsion, and $S^3 E$ has two distinct destabilizing subbundles $E\otimes L^{\mp 1}\to S^3 E$.
\end{Prop}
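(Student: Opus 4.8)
The plan is to first compute the quotient of $S^3 E$ by $V$, then bootstrap an isomorphism $S^2 E\cong S^2 E\otimes L$ that forces $L$ to be $3$-torsion, and finally exploit the resulting symmetry between $L$ and $L^{-1}$ to realize $E\otimes L^{\pm1}$ as subbundles. First I would note that $V$ is stable: a line subbundle of $V$ of degree $\ge 0$ would saturate inside $S^3 E$ to a destabilizing line subbundle, against the hypothesis. As stability of $S^2 E$ forces $E$ stable, no $S^m E$ with $m\le 3$ is destabilized by a line subbundle, so Lemma \ref{strong_and_concise} applies to $V\to S^3 E$ and gives a surjection $\Pi^* V\to\Oo_X(3C_1)$ on $X=\PP_C(E)$. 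Comparing determinants identifies the kernel as $\Oo_X(-3C_1)\otimes\Pi^* L$ with $L=\det V$; pushing the sequence forward along $\Pi$, using $\Pi_*\Oo_X(-3C_1)=0$ and $R^1\Pi_*\Oo_X(-3C_1)=(\Pi_*\Oo_X(C_1))^\vee\otimes(\det E)^\vee\cong E$, yields
\[ 0\to V\to S^3 E\to E\otimes L\to 0. \]

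Next I would feed the surjection $S^3 E\to E\otimes L$ into the sequence \eqref{basic} with $k=2$, whose surjection $E\otimes S^2 E\to S^3 E$ composes to a surjection $E\otimes S^2 E\to E\otimes L$. Writing $\Hom(E\otimes S^2 E,E\otimes L)\cong\Hom(S^2 E,(E\otimes E)\otimes L)$ and using $E\otimes E\cong\Oo_C\oplus S^2 E$ together with $E^\vee\cong E$, this space splits as $\Hom(S^2 E,L)\oplus\Hom(S^2 E,S^2 E\otimes L)$, where the first summand $H^0(S^2 E\otimes L)$ vanishes by stability of $S^2 E$. Hence the composite corresponds to a nonzero map $S^2 E\to S^2 E\otimes L$ between stable bundles of equal slope and rank, which is therefore an isomorphism $S^2 E\cong S^2 E\otimes L$. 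Taking determinants of these rank-three bundles gives $\Oo_C=\det S^2 E=\det(S^2 E\otimes L)=L^3$, so $L$ is $3$-torsion.

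To show $L\neq\Oo_C$ and to produce both subbundles, I would use the Clebsch--Gordan splitting $E\otimes S^2 E\cong E\oplus S^3 E$ (valid over $\CC$, in the spirit of the decompositions already used in Proposition \ref{fundamental_theorem_of_square_tensor}), giving for any line bundle $N$
\[ \Hom(E\otimes N,E\otimes S^2 E)\cong\Hom(E\otimes N,E)\oplus\Hom(E\otimes N,S^3 E). \]
With $N=\Oo_C$ one computes $\Hom(E,E\otimes S^2 E)\cong\Hom(S^2 E,S^2 E)=\CC$ and $\Hom(E,E)=\CC$, forcing $\Hom(E,S^3 E)=0$; since $L=\Oo_C$ would make the quotient $E\otimes L=E$ dualize (via $(S^3 E)^\vee\cong S^3 E$) to a nonzero map $E\to S^3 E$, this rules out $L=\Oo_C$. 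With $N=L^{\epsilon}$ for $\epsilon=\pm1$, the summand $\Hom(E\otimes L^{\epsilon},E)=H^0((\Oo_C\oplus S^2 E)\otimes L^{-\epsilon})$ vanishes ($L$ nontrivial, $S^2 E$ stable), while $\Hom(E\otimes L^{\epsilon},E\otimes S^2 E)\cong\Hom(S^2 E,S^2 E\otimes L^{-\epsilon})\cong\Hom(S^2 E,S^2 E)=\CC$, the last step using $S^2 E\otimes L^{-\epsilon}\cong S^2 E$ (from $S^2 E\cong S^2 E\otimes L$ and $L^3=\Oo_C$). Hence $\Hom(E\otimes L^{\epsilon},S^3 E)\neq0$ for both signs, and each nonzero map, having stable source of slope $0$ mapping into the semistable $S^3 E$ of slope $0$, is an inclusion of a subbundle isomorphic to $E\otimes L^{\epsilon}$. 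The two subbundles are distinct because $E\otimes L\cong E\otimes L^{-1}$ would give $E\cong E\otimes L$, contradicting $\Hom(E,E\otimes L)=H^0((\Oo_C\oplus S^2 E)\otimes L)=0$; comparing quotients finally identifies $V$ with $E\otimes L^{-1}$.

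The main obstacle is exactly this middle passage: a priori $E\otimes L$ appears only as a \emph{quotient} of $S^3 E$, and the extension $0\to V\to S^3 E\to E\otimes L\to0$ does not split for Ext-theoretic reasons, since $\operatorname{Ext}^1(E\otimes L,E\otimes L^{-1})$ is large; one cannot split it by hand. The device that circumvents this is the isomorphism $S^2 E\cong S^2 E\otimes L$, which simultaneously pins down $L^3=\Oo_C$ and makes every relevant Hom-space collapse to $\Hom(S^2 E,S^2 E)=\CC$, thereby turning the quotient $E\otimes L$ into a genuine subbundle through the symmetric computation above. Establishing that isomorphism cleanly---rather than wrestling with the extension class---is the crux, and the elementary plethysm identities $E\otimes E\cong\Oo_C\oplus S^2 E$ and $E\otimes S^2 E\cong E\oplus S^3 E$ are the inputs that keep the Hom computations transparent.
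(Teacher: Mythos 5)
Your proof is correct, and after the shared opening it follows a genuinely different route from the paper's. The common part is the first half: you reduce to stable $V$, invoke Lemma \ref{strong_and_concise} to obtain the surjection $\Pi^*V\to\Oo_X(3C_1)$, and push forward to get $0\to V\to S^3E\to E\otimes L\to 0$. From there the paper proceeds geometrically: it reads $L^3=\Oo_C$ off the determinants of that sequence, dualizes to get $E\otimes L^{-1}\to S^3E$, applies Lemma \ref{strong_and_concise} a \emph{second} time to produce $S^2E\cong S^2E\otimes L^{-2}$ (sequence \eqref{twisting_square}), deduces $H^0(S^4E\otimes L^{\pm 1})\neq 0$ from the twisted sequence $0\to L^{\pm1}\to S^2(S^2E)\to S^4E\otimes L^{\pm1}\to 0$, descends to the subbundles $E\otimes L^{\mp1}\to S^3E$ via Proposition \ref{tosmall}, and finally excludes $L=\Oo_C$ by a separate ruled-surface argument (a $4$-section forces a $6$-section, and Lemma \ref{integral}, Proposition \ref{k-2}, Remark \ref{interesting} yield a contradiction). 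You replace all of this by Clebsch--Gordan plethysm, $E\otimes E\cong\Oo_C\oplus S^2E$ and $E\otimes S^2E\cong E\oplus S^3E$ (valid in characteristic zero, and consistent with the paper's own remark following Lemma \ref{generalization}), combined with simplicity of the stable bundles $E$ and $S^2E$: the composite surjection $E\otimes S^2E\to E\otimes L$ is forced into the summand $\Hom(S^2E,S^2E\otimes L)$, giving the isomorphism $S^2E\cong S^2E\otimes L$ and $L^3=\Oo_C$ in one stroke; $\Hom(E,S^3E)=0$ rules out $L=\Oo_C$; and $\Hom(E\otimes L^{\pm1},S^3E)\cong\Hom(S^2E,S^2E)=\CC$ produces both subbundles, whose distinctness you verify explicitly (the paper leaves it implicit). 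Each step checks out: the Hom-space decompositions are correct, a nonzero map between stable bundles of equal slope and rank is an isomorphism, and a nonzero map from a stable degree-$0$ bundle into the semistable $S^3E$ is a subbundle inclusion. As for what each approach buys: yours is shorter, needs Lemma \ref{strong_and_concise} only once, avoids all further geometry of sections, and gives the sharper count $\dim\Hom(E\otimes L^{\pm1},S^3E)=1$; the paper's route stays inside the $k$-section toolkit it has built and records on the way the statement that is actually reused later, namely that $S^4E$ is destabilized by the line subbundles $L^{\mp1}$ (compare Corollary \ref{clear_form} and Theorem \ref{concludingconcluding}).

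One aside in your write-up is false, though harmless: the extension $0\to V\to S^3E\to E\otimes L\to 0$ does split. Largeness of $\operatorname{Ext}^1(E\otimes L,E\otimes L^{-1})$ does not prevent a particular class from vanishing, and your own conclusions force splitting: the composite of the inclusion $E\otimes L\to S^3E$ with the projection $S^3E\to E\otimes L$ lies in $\Hom(E\otimes L,E\otimes L)=\CC\cdot\mathrm{id}$, and it cannot be zero, since otherwise $E\otimes L$ would embed into $V=\ker(S^3E\to E\otimes L)$, forcing $V\cong E\otimes L$ and hence $L=\det V=L^2$, i.e. $L=\Oo_C$, which you have excluded; a nonzero multiple of the identity then provides the splitting. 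Since no step of your argument uses non-splitting, this does not affect correctness.
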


\begin{proof}
Assume that there exists a nonzero morphism $V\to S^3E$ for some stable vector bundle $V$ on $C$ of rank $2$ with $\det V= L$ of $\deg L=0$.
Then, by Lemma \ref{strong_and_concise}, the induced morphism $\Pi^*V\to \Oo_X(3C_1)$ must be surjective where $X=\PP_C(E)$ is the ruled surface $\Pi:\PP_C(E)\to C$ and $C_1$ is a unisecant divisor on $X$\linebreak such that $E=\Pi_*\Oo_X(C_1)$.
Then we have the following exact sequence on $X$ by completing the kernel after comparing the determinants.
\[
0\to\Oo(-3C_1)\otimes \Pi^*L\to \Pi^*V\to \Oo_X(3C_1)\to 0
\]
By pushing forward the above sequence, we get the exact sequence
\[
0\to V\to S^3 E\to E^\vee\otimes L\to 0
\]
on $C$, and it gives that $L^3=\Oo_C$ by comparing the determinants.
Moreover, by taking the dual of the sequence, we obtain an injection $E\otimes L^{-1}\to (S^3 E)^\vee\cong S^3 E$.

Assume that $L$ is nontrivial.
Due to Lemma \ref{strong_and_concise} with respect to $E\otimes L^{-1}\to S^3 E$, we can say that there exists the following exact sequence on $X$.
\[
0\to \Oo_X(-3C_1)\otimes \Pi^*L^{-2}\to \Pi^*(E\otimes L^{-1})\to \Oo_X(3C_1)\to 0
\]
By pushing forward the exact sequence after twisting by $\Oo_X(-C_1)$, we have the exact sequence
\[\label{twisting_square}\tag{3.2}
0\to S^2 E\to S^2 E\otimes L^{-2}\to 0
\]
on $C$ due to $(S^2 E)^\vee\cong S^2 (E^\vee)\cong S^2 E$.
Thus $S^2E\cong S^2E\otimes L$.
Twisting both sides by $L$, we also obtain that $S^2E\otimes L\cong S^2E\otimes L^2$.
That is, $S^2E\otimes L^{-1}\cong S^2E\cong S^2E\otimes L$.
As shown in Example \ref{S3}, there is the following exact sequence.
\[
0\to \Oo_C\to S^2(S^2 E)\to S^4 E\to 0
\]
Notice that $H^0(S^2(S^2 E))\neq 0$.
Twisting the sequence by $L^{\pm 1}$, we get the following exact sequence.
\[
0\to L^{\pm 1}\to S^2(S^2 E)\to S^4E\otimes L^{\pm 1}\to 0
\]
Since $L\neq\Oo_C$, $H^0(S^2(S^2 E))\neq 0$ implies that $H^0(S^4 E\otimes L^{\pm 1})\neq0$.
So there are nonzero morphisms $L^{\mp 1}\to S^4 E$, which respectively induce destabilizations $E\otimes L^{\mp 1}\to S^3 E$ by Proposition \ref{tosmall}.

Now it remains to eliminate the case $L=\Oo_C$.
Suppose that there is an injection $E\to S^3 E$.
Applying Proposition \ref{tosmall}, we can see that there exist a $4$-section $D\sim 4C_1$ and the following exact sequence on $X$.
\[
0\to \Oo_X(-8C_1)\to \Oo_X(-4C_1)\to \Oo_D(-4C_1)\to 0
\]
By pushing forward the sequence to $C$, we obtain an injection $H^0(\pi_*\Oo_D(-4C_1))\to H^0(S^6 E)$ after taking global sections where $\pi:D\to C$ is the induced unramified $4$-covering.
As $S^2 E$ is assumed to be stable so that $E$ is stable as well, $D$ is irreducible and reduced by Lemma \ref{integral}.
Due to Proposition \ref{k-2},  $\Oo_D(2C_1)=\Oo_D$, and thus $H^0(S^6 E)\neq 0$ follows from $H^0(\pi_*\Oo_D(-4C_1))=H^0(\pi_*\Oo_D)=H^0(\Oo_D)\neq 0$.

Let $B\sim 6C_1$ be the $6$-section on $X$ corresponding to a nonzero global section of $S^6 E$, and we denote the induced $6$-covering by $\xi=\Pi|_B: B\to C$.
Suppose that $B$ is irreducible and reduced.
Then, from Remark \ref{interesting}, $B$ is smooth.
By pushing forward the exact sequence
\[
0\to \Oo_X(-6C_1)\to \Oo_X\to \Oo_B\to 0
\]
on $X$ to $C$, we can see that $\xi_*\Oo_B=\Oo_C\oplus S^4 E$ as there is a splitting of the natural inclusion $\Oo_C\to\xi_*\Oo_B$ \cite[I:~p.~248]{Lazarsfeld04}.
Because $H^0(S^4 E)\neq 0$, we get $h^0(\Oo_B)=h^0(\xi_*\Oo_B)\geq 2$, contradicting the hypothesis on $B$.\linebreak
Therefore, $B$ is neither irreducible nor reduced, and so by Lemma \ref{integral}, we have a contradiction to the assumption that none of $E$, $S^2 E$, and $S^3 E$ are destabilized by a line subbundle.
\end{proof}

We continue to classify vector bundles $E$ of rank $2$ with trivial determinant where $S^2 E$ is stable but there exist subbundles $E\otimes L^{\mp 1}\to S^3 E$ for some nontrivial $3$-torsion line bundle $L$.

\begin{Prop}\label{sumup}
Let $E$ be a vector bundle on $C$ of rank $2$ with trivial determinant.
Assume that $S^2 E$ is stable and $S^3 E$ has destabilizing subbundles $E\otimes L^{\mp 1}\to S^3 E$ for some $L\in J_3(C)\backslash\{\Oo_C\}$.
Then there exists an irreducible and reduced $6$-section $B$ on $X=\PP_C(E)$ and $\xi^*L=\Oo_B$ for the induced $6$-covering $\xi: B\to C$.
\end{Prop}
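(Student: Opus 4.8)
The plan is to realize the $6$-section as the vanishing locus of a Wronskian-type section built from the two given destabilizing subbundles, and then to read off $\xi^* L = \Oo_B$ from the fact that this same construction degenerates precisely along $B$.

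First I would feed the two subbundles into Lemma \ref{strong_and_concise}. Since $S^2 E$ is stable, $E$ is stable, and by Proposition \ref{tosmall} neither $S^2 E$ nor $S^3 E$ can be destabilized by a line subbundle; thus the hypotheses of Lemma \ref{strong_and_concise} hold for $k=3$, and the stable rank-two inclusions $E\otimes L^{\mp 1}\to S^3 E$ yield surjections $\Pi^*(E\otimes L^{\mp 1})\to\Oo_X(3C_1)$ on $X=\PP_C(E)$. After twisting by $\Pi^* L^{\pm1}$ I rewrite these as surjections $s_{\pm}\colon \Pi^*E\to \Oo_X(3C_1)\otimes \Pi^*L^{\pm1}$, whose kernels are the line subbundles $\Oo_X(-3C_1)\otimes \Pi^*L^{\mp 1}\subseteq \Pi^*E$ (computed from $\det(E\otimes L^{\mp1})=L^{\mp2}$ together with $L^3=\Oo_C$).

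Next I would form the determinant $\Psi=\det(s_+,s_-)$ of the bundle map $(s_+,s_-)\colon \Pi^*E\to (\Oo_X(3C_1)\otimes\Pi^*L)\oplus(\Oo_X(3C_1)\otimes\Pi^*L^{-1})$. Because $\det E=\Oo_C$, this $\Psi$ is a global section of $\Oo_X(6C_1)$, so its zero locus is a $6$-secant divisor $B\sim 6C_1$, and $H^0(S^6E)=H^0(\Oo_X(6C_1))\neq 0$. The nonvanishing of $\Psi$ is the crux: the two kernels $\Oo_X(-3C_1)\otimes\Pi^*L^{\mp1}$ are non-isomorphic (here $L^2\neq\Oo_C$ is used essentially), hence distinct subsheaves of $\Pi^*E$, so their fibers span $\Pi^*E$ at a general point and $(s_+,s_-)$ is generically an isomorphism, forcing $\Psi\neq 0$. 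That $B$ is irreducible and reduced then follows from Lemma \ref{integral}, since no $S^m E$ with $m\leq 3$ is destabilized by a line subbundle; consequently $\xi=\Pi|_B\colon B\to C$ is an unramified $6$-covering by Remark \ref{interesting}.

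Finally, to obtain $\xi^* L=\Oo_B$ I would restrict $(s_+,s_-)$ to $B$. By construction $\det(s_+,s_-)=\Psi$ vanishes identically on the reduced curve $B$, so $(s_+,s_-)|_B$ has rank $\leq 1$ everywhere; on the other hand each $s_{\pm}|_B\colon \xi^*E\to \Oo_B(3C_1)\otimes\xi^*L^{\pm1}$ remains surjective, so the rank is exactly $1$ and the image is a line subbundle $\mathcal{L}$ of $(\Oo_B(3C_1)\otimes\xi^*L)\oplus(\Oo_B(3C_1)\otimes\xi^*L^{-1})$ projecting surjectively, hence isomorphically, onto each summand. This gives $\Oo_B(3C_1)\otimes\xi^*L\cong\Oo_B(3C_1)\otimes\xi^*L^{-1}$, i.e.\ $\xi^*L^2=\Oo_B$, and combining with $\xi^*L^3=\Oo_B$ yields $\xi^*L=\Oo_B$. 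The main obstacle throughout is the transversality of the two destabilizing subbundles, which I control by the kernel computation; its breakdown exactly along $B$ is precisely what trivializes $\xi^* L$.
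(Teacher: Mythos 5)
Your proposal is correct; its first half coincides in substance with the paper's argument, while its second half takes a genuinely different and cleaner route. For the existence of the $6$-section, the paper composes the inclusion $\Oo_X(-3C_1+\bb f)\to\Pi^*E$ coming from one of the two exact sequences with the surjection $\Pi^*E\to\Oo_X(3C_1+\bb f)$ coming from the other; this composite is nonzero precisely because the two kernel line subbundles are non-isomorphic, and its vanishing locus is the coincidence locus of those kernels --- which is exactly the zero divisor of your determinant section $\Psi=\det(s_+,s_-)$. So the two constructions produce the same $B$, and both invoke Lemma \ref{integral} (with hypotheses supplied by Proposition \ref{tosmall} and the stability of $S^2E$) for irreducibility and reducedness. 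Where you genuinely diverge is the proof that $\xi^*L=\Oo_B$: the paper restricts the three surjections $\Pi^*E\to\Oo_X(3C_1\pm\bb f)$ and $\Pi^*E\to\Oo_X(C_1)$ to $B$ as in \eqref{three_quotients}, uses the fact that the semi-stable degree-$0$ bundle $\xi^*E$ admits at most two isomorphism types of degree-$0$ quotient line bundles to force $\Oo_B(2C_1\pm\bb f)\cong\Oo_B$, and then feeds this into the pushed-forward sequences \eqref{enclosed_by_squares} --- which rest on $S^2E\cong S^2E\otimes L^{\pm1}$ from \eqref{twisting_square} --- to manufacture a nonzero section of $S^2E$, contradicting its stability. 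You instead observe that on the reduced curve $B=Z(\Psi)$ the map $(s_+,s_-)$ has constant rank one, so its image is a line bundle mapping isomorphically onto each of $\Oo_B(3C_1)\otimes\xi^*L^{\pm1}$, giving $\xi^*L^2=\Oo_B$ directly, after which $L^3=\Oo_C$ finishes; this argument is sound (constant fiberwise rank on a reduced curve makes kernel, image, and cokernel locally free, and a surjection of line bundles is an isomorphism). Your route is shorter and avoids \eqref{twisting_square}, \eqref{enclosed_by_squares}, and the quotient-counting step entirely; the paper's longer route has the side benefit that the data \eqref{three_quotients} and \eqref{enclosed_by_squares} it establishes along the way are reused in the proof of Theorem \ref{classification_rank_two}, which your shortcut would not supply.
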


\begin{proof}
Using Lemma \ref{strong_and_concise} with respect to the inclusions $E\otimes L^{\mp 1}\to S^3 E$, we can further assert that there exist two exact sequences
\[
0\to \Oo_X(-3C_1)\otimes \Pi^*L^{\mp 2}\to \Pi^*(E\otimes L^{\mp 1})\to \Oo_X(3C_1)\to 0
\]
on $X$.
After twisting by $\Pi^*L^{\pm 1}=\Pi^*\Oo_C(\pm\bb)=\Oo_X(\pm\bb f)$, the sequences are in other way written by
\[
0\to \Oo_X(-3C_1\mp \bb f)\to \Pi^*E\to \Oo_X(3C_1\pm\bb f)\to 0.
\]
Since $L$ is nontrivial $3$-torsion, $\Oo_X(3C_1+\aa f)\neq \Oo_X(3C_1-\aa f)$.
So, by adjoining
\[
0\to \Oo_X(-3C_1+\bb f)\to \Pi^*E
\quad\text{and}\quad
\Pi^*E\to\Oo_X(3C_1+\bb f)\to 0,
\]
we get a nonzero morphism $\Oo_X(-3C_1+\bb f)\to \Oo_X(3C_1+\bb f)$ which yields a global section of $\Oo_X(6C_1)$.
Thus if $S^3 E$ has destabilizing subbundles $E\otimes L^{\mp 1}\to S^3 E$ for some $L\in J_3(C)\backslash\{\Oo_C\}$, then there exists a~$6$-section $B\sim 6C_1$ on $X$.
Furthermore, by Lemma \ref{integral}, $B$ is irreducible and reduced as $S^2 E$ is stable.

Let $\xi=\Pi|_B:B\to C$ be the induced $6$-covering.
By restricting the three surjections
\[
\Pi^*E\to \Oo_X(3C_1+\bb f),\ \ 
\Pi^*E\to \Oo_X(3C_1-\bb f),\ \ \text{and}\ \ 
\Pi^*E\to \Oo_X(C_1)
\]
on $X$ to $B$, we have the following three quotient line bundles on $B$ of degree $0$.
\[\label{three_quotients}\tag{3.3}
\xi^*E\to \Oo_B(3C_1+\bb f),\ \ 
\xi^*E\to \Oo_B(3C_1-\bb f),\ \ \text{and}\ \
\xi^*E\to \Oo_B(C_1)
\]

As $B\sim 6C_1$ is effective, by pushing forward the exact sequences
\[
0\to \Oo_X(-4C_1\pm\bb f)\to \Oo_X(2C_1\pm\bb f)\to \Oo_B(2C_1\pm\aa f)\to 0
\]
on $X$, we have the exact sequences
\[\label{enclosed_by_squares}\tag{3.4}
0\to S^2 E\to \xi_*\Oo_B(2C_1\pm\bb f)\to S^2 E\to 0
\]
on $C$ because $(S^2 E)^\vee\cong S^2 E$ and $S^2 E\cong S^2E\otimes L^{\pm 1}$ from \eqref{twisting_square} in the proof of Proposition \ref{last}.

Now suppose that $\xi^*L=\Oo_B(\bb f)$ is nontrivial.
Because $\xi^*E$ is a semi-stable vector bundle on $B$ of rank~$2$ and degree $0$, it has at most two distinct isomorphic types of quotient line bundles of degree $0$.
Since the first and second quotients in \eqref{three_quotients} are distinct, the third quotient must be equal to one of the others.
Thus we get either
\[
\Oo_B(2C_1+\bb f)\cong \Oo_B
\quad \text{or}\quad
\Oo_B(2C_1-\bb f)\cong \Oo_B.
\]
So, when $\Oo_B(\bb f)\neq \Oo_B$, $H^0(S^2 E)\neq 0$ follows from one of \eqref{enclosed_by_squares} together with $h^0(\xi_*\Oo_B)=h^0(\Oo_B)=1$, and it gives rise to the strict semi-stability of $S^2 E$, contradicting the assumption.
Thus $\xi^*L=\Oo_B$.
\end{proof}
\restoregeometry

\begin{Thm}\label{classification_rank_two}
Let $E$ be a vector bundle on $C$ of rank $2$ with trivial determinant.
Assume that $S^2 E$ is stable but $S^3 E$ is not destabilized by a line subbundle.
If $S^3E$ is destabilized by a subbundle of rank $2$,\linebreak then $S^2 E=\eta_*M$ for some nontrivial unramified cyclic covering $\eta: C'\to C$ of degree $3$ and $2$-torsion line bundle $M$ on $C'$ which is not contained in $\eta^*J_0(C)$.
\end{Thm}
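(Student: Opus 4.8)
The plan is to assemble the conclusions already obtained in Proposition \ref{last} and Proposition \ref{sumup} and then upgrade a numerical invariance to a geometric descent statement. By Proposition \ref{last} the hypotheses furnish a nontrivial $L\in J_3(C)$, destabilizing subbundles $E\otimes L^{\mp1}\to S^3 E$, and, most importantly, the isomorphism $S^2 E\cong S^2 E\otimes L^{-2}\cong S^2 E\otimes L$ recorded in \eqref{twisting_square}. Proposition \ref{sumup} yields an irreducible reduced $6$-section $B$ with $\xi^*L=\Oo_B$ for $\xi=\Pi|_B$. Since $L$ is a nontrivial element of $J_3(C)$, it determines an unramified cyclic triple covering $\eta:C'\to C$ with $\eta^*L=\Oo_{C'}$ and $\eta_*\Oo_{C'}=\Oo_C\oplus L^{-1}\oplus L^{-2}$; by \cite[Proposition 11.4.3]{Birkenhake-Lange04} the relation $\xi^*L=\Oo_B$ shows that $\xi$ factors as $B\to C'\xrightarrow{\eta}C$, so $\eta$ is the covering required by the statement. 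It then suffices to promote $S^2 E\cong S^2 E\otimes L$ to the identification $S^2 E=\eta_*M$.

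The core of the argument is Galois descent along $\eta$. Let $G=\langle\sigma\rangle\cong\ZZ/3$ be the Galois group and fix a trivialization $\eta^*L\cong\Oo_{C'}$ on which $\sigma$ acts by a primitive cube root of unity $\zeta$. As $S^2 E$ is stable, hence simple, the isomorphism $\phi:S^2 E\xrightarrow{\sim}S^2 E\otimes L$ can be normalized so that its threefold composite $S^2 E\to S^2 E\otimes L^3=S^2 E$ is the identity. Pulling back to $C'$ and using the trivialization turns $\eta^*\phi$ into an automorphism $a$ of $\eta^*(S^2 E)$ with $a^3=\mathrm{id}$, so that $\eta^*(S^2 E)$ splits into eigenbundles $W_0\oplus W_1\oplus W_2$ for $1,\zeta,\zeta^2$. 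Because $\sigma$ scales the trivialization by $\zeta$, the canonical linearization shifts the eigenvalue by $\zeta$ and hence carries $W_j$ isomorphically to $W_{j+1}$; as $\sigma$ permutes the three eigenbundles in a single cycle and $\rk\eta^*(S^2 E)=3$, each $W_j$ is a line bundle and $\eta^*(S^2 E)\cong M\oplus\sigma^*M\oplus\sigma^{2*}M$ for $M:=W_0$, compatibly with the descent data. Since $\eta^*\eta_*M\cong\bigoplus_{g\in G}g^*M$ carries the same $G$-structure, faithfully flat descent gives $S^2 E\cong\eta_*M$. I expect this to be the main obstacle: one has to set up the linearization so that $\sigma$ genuinely shifts eigenvalues, which is what forces the eigenbundles to be line bundles cyclically permuted by $G$, and then match the two sets of descent data. (Alternatively, one may invoke Mumford's classification of rank-$3$ orthogonal bundles, already cited in Example \ref{S3}, to obtain the form $S^2 E=\eta_*M$ directly, the $L$-invariance selecting the triple-cover case.)

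It remains to pin down $M$. The self-duality $S^2 E\cong(S^2 E)^\vee$, which follows from $\det E=\Oo_C$ and hence $E\cong E^\vee$, together with relative duality for the unramified map $\eta$, where $\omega_{C'/C}=\Oo_{C'}$, gives $(\eta_*M)^\vee\cong\eta_*M^{-1}$, so $\eta_*M\cong\eta_*M^{-1}$. First, stability of $\eta_*M$ forces $M,\sigma^*M,\sigma^{2*}M$ to be pairwise distinct: if $\sigma^*M\cong M$, then $M$ is $G$-invariant and descends to $M\cong\eta^*N$, whence $\eta_*M\cong N\otimes\eta_*\Oo_{C'}$ is decomposable, contradicting stability. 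Pulling back $\eta_*M\cong\eta_*M^{-1}$ yields $M^{-1}\cong g^*M$ for some $g\in G$; the cases $g=\sigma,\sigma^2$ give, after applying $\sigma^*$, the relation $M\cong\sigma^{2*}M$, contradicting distinctness, so $g=e$ and $M^2=\Oo_{C'}$, that is, $M\in J_2(C')$. Finally, $M\notin\eta^*J^0(C)$ is again exactly stability: were $M=\eta^*N$, the bundle $\eta_*M=N\otimes(\Oo_C\oplus L^{-1}\oplus L^{-2})$ would split, contradicting the hypothesis that $S^2 E$ is stable. This is precisely the criterion of Example \ref{S3}, and it completes the identification $S^2 E=\eta_*M$ with $M\in J_2(C')\backslash\eta^*J^0(C)$.
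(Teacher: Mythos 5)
Your proof is correct, but it takes a genuinely different route from the paper's. The paper stays inside the geometry of the ruled surface: it takes the irreducible $6$-section $B$ from Proposition \ref{sumup}, factors $\xi\colon B\to C$ as $B\xrightarrow{\sigma}C'\xrightarrow{\eta}C$, proves that $\eta^*E\cong\sigma_*\Oo_B(C_1)$ is stable, computes $\Nm_{B/C'}(\Oo_B(2C_1))=\Oo_{C'}$ and $\Oo_B(4C_1)=\Oo_B$, and then invokes Propositions \ref{intersection} and \ref{img} and Remark \ref{sss} to write $\sigma_*\Oo_B(2C_1)=M\oplus N$ with $M,N\in J_2(C')$; the exact sequence $0\to S^2E\to(\eta_*M)\oplus(\eta_*N)\to S^2E\to 0$ and the stability of $S^2E$ then identify $S^2E$ with $\eta_*M$. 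You instead extract only two facts from the preceding results---the nontrivial $L\in J_3(C)$ and the twist-invariance $S^2E\cong S^2E\otimes L$ of \eqref{twisting_square}---and run the classical cyclic-descent argument: simplicity of $S^2E$ lets you normalize the isomorphism so that its cube is the identity, the pullback to $C'$ splits into eigen-line-bundles cyclically permuted by the Galois linearization, and descent gives $S^2E\cong\eta_*M$; self-duality of $S^2E$ together with relative duality for the \'{e}tale map $\eta$ (where $\omega_{C'/C}=\Oo_{C'}$) forces $M\in J_2(C')$, and stability forces $M\notin\eta^*J^0(C)$. Your route is shorter and more self-contained: in fact you never need the $6$-section at all, since the cover $\eta$ exists simply because $L\in J_3(C)\backslash\{\Oo_C\}$, so Proposition \ref{sumup} enters your argument only decoratively. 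It also isolates a general principle (a stable bundle invariant under twisting by a torsion line bundle is a direct image from the associated cyclic cover). The paper's route, by contrast, produces extra geometric data along the way ($\eta^*E\cong\sigma_*\Oo_B(C_1)$, the Prym membership of $\Oo_B(2C_1)$) and obtains the $2$-torsion property of $M$ for free from Remark \ref{sss} rather than from a separate duality argument.

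Two points to tighten. First, your parenthetical alternative---invoking Mumford's classification of rank-$3$ orthogonal bundles to get $S^2E=\eta_*M$ ``directly''---does not work as stated: the rank-$3$ classification cited in Example \ref{S3} only says that a rank-$3$ orthogonal bundle has the form $A^{-1}\otimes S^2F$ for some rank-$2$ bundle $F$, which is circular here; your descent argument really is needed. Second, the step ``carries the same $G$-structure, faithfully flat descent gives $S^2E\cong\eta_*M$'' glosses over matching the two linearizations; this is standard for induced equivariant bundles, but you can avoid it entirely: the projection $\eta^*(S^2E)\to M$ corresponds by adjunction to a nonzero map $S^2E\to\eta_*M$, which is injective because $S^2E$ is stable and $\eta_*M$ is semi-stable of the same slope (pull back by $\eta$ and use \cite[II:~Lemma 6.4.12]{Lazarsfeld04}), hence an isomorphism since both have rank $3$ and degree $0$. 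Likewise, your claim that $\sigma^*M\cong M$ implies $M\in\eta^*\Pic(C)$ deserves a word of justification (over $\CC$ it follows from the Hochschild--Serre sequence for the cyclic \'{e}tale cover, using $H^2(\ZZ/3,\CC^*)=0$), since a priori a $\sigma$-invariant line bundle need not descend.
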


\begin{proof}
Let $X=\PP_C(E)$ be the ruled surface $\Pi: \PP_C(E)\to C$ and $C_1$ be a unisecant divisor on $X$ such that $\Pi_*\Oo_X(C_1)=E$.
Under the same assumption, we have seen in Proposition \ref{last} and \ref{sumup} that
\begin{enumerate}
\item there exists an irreducible and reduced $6$-section $B\sim 6C_1$ on $X$ with projection $\xi=\Pi|_B: B\to C$,
\item there exists a nontrivial $3$-torsion line bundle $L=\Oo_C(\bb)$ on $C$ and $\xi^*L$ is trivial,
\item from \eqref{three_quotients}, $\xi^*E$ has quotient line bundles $\Oo_B(C_1)$ and $\Oo_B(3C_1)$ of degree $0$, and
\item from \eqref{enclosed_by_squares}, there is the following exact sequence on $C$.
\[
0\to S^2E\to \xi_*\Oo_B(2C_1)\to S^2E\to 0
\]
\end{enumerate}
In addition, from (2), $\xi$ must be factored into
\[
\xymatrix@1{
\xi:\ B\ \ar[r]^{\sigma} &
\ C'\ \ar[r]^{\eta} &
\ C 
}
\]
for some unramified cyclic triple covering $\eta$ and unramified double covering $\sigma$ \cite[Proposition 11.4.3]{Birkenhake-Lange04}.

First, notice that $\eta^*E$ is stable.
For, otherwise, its destabilizing line bundle $\eta^*E\to S$ of degree $0$ gives a morphism $\phi:C'\to X$ over $C$ and an $m$-section $\phi(C')\equiv mC_1+\deg\phi^*\Oo_X(C_1)\cdot f=
mC_1+\deg S\cdot f=mC_1$\linebreak on $X$ for some $m\,|\,3$.
It contradicts the assumption that $S^3 E$ is not destabilized by a line subbundle.

Next, note that $\eta^*E\cong\sigma_*\Oo_B(C_1)$.
Because $B\sim 6C_1$ is effective, by pushing forward the exact sequence\linebreak
\vspace{-1em}
\[
0\to \Oo_X(-5C_1)\to \Oo_X(C_1)\to \Oo_B(C_1)\to 0
\]
on $X$ to $C$, we obtain an injection $E\to \xi_*\Oo_B(C_1)=\eta_*(\sigma_*\Oo_B(C_1))$.
Then it gives a nonzero morphism $\eta^*E\to\sigma_*\Oo_B(C_1)$ on $C'$, which is an isomorphism as $\eta^*E$ is stable and $\deg\eta^*E=\deg \Oo_B(C_1)=0$.

Then we check that $\Oo_B(2C_1)\in\Pr(B/C')$.
From $\det\sigma_*\Oo_B(C_1)\cong \det\sigma_*\Oo_{B}\otimes \Nm_{B/C'}(\Oo_B(C_1))$ and $(\det \sigma_*\Oo_B)^2=\Oo_{C'}$ as $\sigma$ is unramified,
we can observe that
\[
\Nm_{B/C'}(\Oo_B(2C_1))
=\{\Nm_{B/C'}(\Oo_B(C_1))\}^2
\cong (\det \sigma_*\Oo_B(C_1))^2.
\]
Since $\sigma_*\Oo_B(C_1)\cong \eta^*E$
 and $\det \eta^*E=\eta^*\det E=\Oo_{C'}$, we get $\Nm_{B/C'}(\Oo_B(2C_1))=\Oo_{C'}$. 
 
Also, we deduce that $\Oo_B(4C_1)=\Oo_B$ from (3).
Indeed, if $\Oo_B(C_1)=\Oo_B(3C_1)$, then $\Oo_B(2C_1)=\Oo_B$.
Otherwise, if $\Oo_B(C_1)\neq\Oo_B(3C_1)$, then $\eta^*E=\Oo_B(C_1)\oplus\Oo_B(3C_1)$ and $\Oo_B(4C_1)=\det \eta^*E=\Oo_B$.

Now we have $\Oo_B(2C_1)\in\Pr(B/C')$ and $\Oo_B(2C_1)\in J_2(B)$.
Thus $\sigma_*\Oo_B(2C_1)$ is orthogonal with values in $\Oo_{C'}$, and $\Oo_B(2C_1)\in \sigma^*J^0(C')$ by Proposition \ref{intersection}.
Due to Proposition \ref{img}, we can see that $\sigma_*\Oo_B(2C_1)$ is strictly semi-stable, and by Remark \ref{sss},
$\sigma_*\Oo_B(2C_1)=M\oplus N$ for some $M,\,N\in J_2(C')$.
Therefore, $S^2E\cong \eta_*M$ is obtained from the stability of $S^2 E$ and the following exact sequence from (4).
\[
0
\to S^2 E
\to \eta_*(\sigma_*\Oo_B(2C_1))\cong(\eta_*M)\oplus (\eta_*N)
\to S^2 E
\to 0
\]

Finally, $M\in \eta^*J^0(C)$ if and only if $S^2 E=\eta_*M$ is strictly semi-stable as in Example \ref{S3}.
\end{proof}

This is the end of the classification of $E\in\SU_C(2,\Oo_C)$ with stable $S^2 E$ and strictly semi-stable $S^3 E$; by Theorem \ref{classification_rank_two}, they are only given as in Example \ref{S3}, and the number of such $E$ is finite according to Proposition \ref{fundamental_theorem_of_square_tensor} and the argument preceding the proposition.
Recall that we observed in Theorem \ref{classification_rank_one} that if $S^3 E$ is destabilized by a line subbundle, then $S^2 E$ is necessarily not stable.

If $S^2 E$ is stable, then $S^3 E$ is not stable if and only if $S^3 E$ is destabilized by subbundles $E\otimes L^{\mp 1}\to S^3 E$ for some line bundle $L$ due to Proposition \ref{last}.
Moreover, from an exact sequence
\[
0
\to \Hom(L^{\mp 1}, S^4E)
\to \Hom(E\otimes L^{\mp 1}, S^3 E)
\to \Hom(L^{\mp 1}, S^2E)
\]
of Proposition \ref{tosmall} with $k=3$, we can see that $S^3 E$ is destabilized by subbundles $E\otimes L^{\mp 1}$ if and only if $S^4 E$ is destabilized by line subbundles $L^{\mp 1}\to S^4 E$ when $S^2 E$ is stable so that $\Hom(L^{\mp 1},S^2 E)=0$.
Therefore, we have the following corollary under the assumption on the stability of $S^2 E$.

\begin{Cor}
\label{clear_form}
If $S^2 E$ is stable, then $S^3 E$ is strictly semi-stable if and only if $S^4 E$ is destabilized by a line subbundle, and the number of such $E\in\SU_C(2,\Oo_C)$ is finite.
In particular, there exist only a finite number of ruled surfaces $X=\PP_C(E)$ for stable $E$ with even degree where $X$ contains a $4$-section of zero self-intersection whereas it has no such $m$-section for any $m<4$.
\end{Cor}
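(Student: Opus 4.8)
The plan is to assemble the equivalence from Propositions \ref{tosmall} and \ref{last} together with Theorem \ref{classification_rank_one}, then to import the finiteness from Theorem \ref{classification_rank_two} and Proposition \ref{fundamental_theorem_of_square_tensor}, and finally to translate everything into the language of $k$-sections.

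First I would dispatch the easy implication. If $S^4 E$ is destabilized by a line subbundle $L^{-1}\to S^4 E$, then the second bulleted implication of Proposition \ref{tosmall} (with $k=3$ and $\det E=\Oo_C$) produces a destabilizing subbundle $E\otimes L^{-1}\to S^3 E$; hence $S^3 E$ is not stable, and being semi-stable it is strictly semi-stable.

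For the converse I would first locate the destabilizer. Stability of $S^2 E$ forces $E$ to be stable, so by Remark \ref{first} a non-stable $S^3 E$ is destabilized in rank $1$ or $2$. Rank $1$ is impossible, since by Theorem \ref{classification_rank_one} a destabilizing line subbundle of $S^3 E$ would force $S^2 E$ to be strictly semi-stable. Therefore $S^3 E$ is destabilized in rank $2$, and Proposition \ref{last} supplies a nontrivial $L\in J_3(C)$ together with destabilizing subbundles $E\otimes L^{\mp 1}\to S^3 E$. The key step is then the left-exact sequence of Proposition \ref{tosmall} with $k=3$,
\[
0\to \Hom(L^{\mp 1}, S^4 E)\to \Hom(E\otimes L^{\mp 1}, S^3 E)\to \Hom(L^{\mp 1}, S^2 E).
\]
Because $S^2 E$ is stable of slope $0$ and $L^{\mp 1}$ is a nontrivial degree-$0$ line bundle, any nonzero map $L^{\mp 1}\to S^2 E$ would saturate to a line subbundle of nonnegative degree, contradicting stability; so $\Hom(L^{\mp 1}, S^2 E)=0$ and the destabilizing map $E\otimes L^{\mp 1}\to S^3 E$ lifts to a nonzero $L^{\mp 1}\to S^4 E$, exhibiting $L^{\mp 1}$ as a destabilizing line subbundle of $S^4 E$ and completing the equivalence.

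For finiteness I would invoke Theorem \ref{classification_rank_two}: under these hypotheses $S^2 E\cong\eta_* M$ for a nontrivial unramified cyclic triple covering $\eta:C'\to C$, determined by one of the finitely many $L\in J_3(C)\setminus\{\Oo_C\}$, together with some $M\in J_2(C')$, of which there are finitely many. Since $\det E=\Oo_C$, Proposition \ref{fundamental_theorem_of_square_tensor} shows that the class of $S^2 E$ pins down $E$ up to a twist by $2$-torsion, so each admissible $S^2 E$ comes from finitely many $E\in\SU_C(2,\Oo_C)$; combining these gives the asserted finiteness. Finally, for the geometric reformulation I would use the correspondence of Section 2.3 and Remark \ref{interesting} between destabilizing line subbundles of $S^m E$ and $m$-sections of zero self-intersection on $X=\PP_C(E)$: such a subbundle of $S^4 E$ is exactly a $4$-section with $D^2=0$, while stability of $S^2 E$ rules out $m$-sections of zero self-intersection for $m\leq 3$ (for $m=1$ the Segre invariant is positive, for $m=2$ a bisection of zero self-intersection is forbidden, and for $m=3$ the equivalence just proved removes the trisection case). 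The genuine difficulty of the corollary has been pushed entirely into the cited results---above all Proposition \ref{last} and Theorem \ref{classification_rank_two}---so that the remaining work is the $\Hom$-vanishing and the bookkeeping of which low-degree sections are excluded; the only point demanding care is the vanishing $\Hom(L^{\mp 1}, S^2 E)=0$ that powers the lifting step.
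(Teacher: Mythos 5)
Your proposal is correct and follows essentially the same route as the paper: the equivalence comes from Proposition \ref{tosmall} with $k=3$ together with the vanishing $\Hom(L^{\mp 1},S^2E)=0$ and Proposition \ref{last} (with Theorem \ref{classification_rank_one} excluding rank-one destabilizers), and the finiteness comes from Theorem \ref{classification_rank_two} combined with Proposition \ref{fundamental_theorem_of_square_tensor}, exactly as in the discussion preceding the corollary in the paper. The only slight imprecision is attributing the exclusion of trisections of zero self-intersection to ``the equivalence just proved''---it is really your rank-one-impossibility step (Theorem \ref{classification_rank_one}, or equivalently Proposition \ref{tosmall}) that rules them out when $S^2E$ is stable.
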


\section{Stability of Higher Symmetric Powers}

\subsection{Finiteness Result}
In the previous sections, we prove that the family of $E\in \SU_C(2,\Oo_C)$ with strictly semi-stable $S^2 E$ has positive dimension, but there are only finitely many $E\in\SU_C(2,\Oo_C)$ with strictly semi-stable $S^3 E$ outside this family.
We will strengthen the result as showing that there are only a finite number of $E\in\SU_C(2,\Oo_C)$ where $S^2 E$ is stable but $S^k E$ is not stable for some $k\geq 3$.

Throughout this and the next subsection, we assume that
\begin{itemize}
\item $E$ is a stable vector bundle on $C$ of rank~$2$ with trivial determinant,
\item $X=\PP_C(E)$ is the ruled surface $\Pi:\PP_C(E)\to C$,
\item $C_1$ is a unisecant divisor on $X$ satisfying $\Pi_*\Oo_X(C_1)=E$, and
\item $\pi=\Pi|_D: D\to C$ is the $k$-covering induced from a $k$-section $D$ on $X$.
\end{itemize}
Then, by Remark \ref{interesting}, $\pi$ is unramified if and only if $D^2=0$.
Also, from the exact sequence \eqref{basic} in the proof of Proposition \ref{tosmall} and its dual sequence with the self-duality $E^\vee\cong E$, we obtain two injections
\[
S^k E\to S^{k+1} E\otimes E
\quad\text{and}\quad
S^k E\to S^{k-1} E\otimes E.
\]
Thus if there is a subbundle $V\to S^k E$, then it induces injections $V\to S^{k+1}E\otimes E$ and $V\to S^{k-1}E\otimes E$, which respectively induce nonzero morphisms
\[\label{str}
V\otimes E\to S^{k-1}E
\quad\text{and}\quad
V\otimes E\to S^{k+1}E
\tag{4.1}\]
due to $\Hom(V\otimes E, S^{k\pm 1}E)
=H^0((V\otimes E)^\vee\otimes S^{k\pm 1}E)
=H^0(V^\vee\otimes S^{k\pm 1}E\otimes E)
=\Hom(V,S^{k\pm 1}E\otimes E)$.

\begin{Prop}\label{reduction}
Let $k\geq 2$.
If $S^{k-1}E$ is stable, then $S^k E$ is not destabilized by a subbundle $V\to S^k E$ of $\rk V<\frac{k}{2}$.
Moreover, when $S^{k-1}E$ is stable, $S^k E$ is not stable if and only if it is destabilized by\linebreak a stable subbundle $V\to S^k E$ of $\rk V=\frac{k+1}{2}$ if $k$ is odd or $\rk V=\frac{k}{2}$ if $k$ is even.
\end{Prop}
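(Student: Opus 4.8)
The plan is to exploit the two nonzero morphisms $V\otimes E\to S^{k-1}E$ and $V\otimes E\to S^{k+1}E$ of \eqref{str} that a destabilizing subbundle $V\to S^k E$ induces, combined with the stability of $S^{k-1}E$ and a slope bookkeeping. First I would record a normalization used throughout. Since $E$ is stable, $S^k E$ is semi-stable of slope $0$, so any destabilizing subbundle $V\to S^k E$ satisfies $\mu(V)\geq 0$; on the other hand every subsheaf of $V$ is a subsheaf of $S^k E$, giving $\mu_{\max}(V)\leq \mu_{\max}(S^k E)=0$. As $\mu(V)\leq \mu_{\max}(V)$ always holds, the chain $0\leq \mu(V)\leq \mu_{\max}(V)\leq 0$ collapses, forcing $\deg V=0$ and in fact $V$ to be \emph{semi-stable} of slope $0$. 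Because $E$ is stable of slope $0$ and we work over $\CC$, it follows that $V\otimes E$ is semi-stable of slope $0$ as well.

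For the first assertion, suppose $\rk V<\tfrac{k}{2}$. Then the induced nonzero morphism $\gamma\colon V\otimes E\to S^{k-1}E$ has image of rank $\rk(\im\gamma)\leq \rk(V\otimes E)=2\rk V<k=\rk S^{k-1}E$, so $\im\gamma$ is a proper (torsion-free) subsheaf and its saturation $W$ in $S^{k-1}E$ is a proper subbundle. The crux is a clash of two degree estimates: on one hand $\im\gamma$ is a quotient of the semi-stable slope-$0$ bundle $V\otimes E$, so $\deg(\im\gamma)\geq 0$; on the other hand, since $S^{k-1}E$ is \emph{stable} of slope $0$ and $W$ is a proper subbundle, $\deg W<0$, whence $\deg(\im\gamma)\leq \deg W<0$. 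This contradiction rules out $\rk V<\tfrac{k}{2}$.

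For the second assertion, the ``if'' direction is immediate, since the presence of any destabilizing subbundle already witnesses that $S^k E$ is not stable. For the converse, if $S^k E$ is not stable then, being semi-stable, it is strictly semi-stable, so Remark~\ref{first} provides a destabilizing subbundle $V$ with $\rk V\leq \tfrac{k+1}{2}$. The first assertion forces $\rk V\geq \lceil k/2\rceil$, and combining the two bounds pins the rank to the single integer $\lceil k/2\rceil$, which equals $\tfrac{k+1}{2}$ for odd $k$ and $\tfrac{k}{2}$ for even $k$. It remains to see that $V$ may be taken stable: passing to the first term $V_1$ of a Jordan--Hölder filtration of the slope-$0$ semi-stable bundle $V$ yields a \emph{stable} slope-$0$ subbundle that is saturated in $V$, hence in $S^k E$, so $V_1$ again destabilizes $S^k E$. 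By the first assertion $\rk V_1\geq \lceil k/2\rceil=\rk V$, forcing $V_1=V$ and exhibiting $V$ as stable.

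I expect the main obstacle to lie in the degree bookkeeping of the first assertion: one must first justify that a destabilizing subbundle is genuinely semi-stable of slope $0$ (so that $V\otimes E$ is semi-stable and $\im\gamma$ has non-negative degree), then contrast this with the strict negativity $\deg W<0$ coming from the stability of $S^{k-1}E$. The technical points requiring care are that tensoring by the stable bundle $E$ preserves semi-stability and slope, and the saturation comparison $\deg(\im\gamma)\leq \deg W$; the rank accounting and the Jordan--Hölder step are then routine.
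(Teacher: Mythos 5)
Your proof is correct and follows essentially the same route as the paper: the nonzero morphism $V\otimes E\to S^{k-1}E$ from \eqref{str}, which the stability of $S^{k-1}E$ forces to be surjective (your saturation/degree bookkeeping is exactly the unpacked justification of the paper's terse ``must be surjective'' step), yields the rank lower bound, and Remark~\ref{first} the upper bound. Your Jordan--H\"{o}lder argument for the stability of $V$ is likewise the same minimality-of-rank observation the paper makes, so there is nothing to add.
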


\begin{proof}
Assume that $S^k E$ is destabilized by a subbundle $V\to S^k E$ of $\rk V=r$ and $\deg V=0$.
As \eqref{str}, the injection $V\to S^k E$ gives a nonzero morphism $V\otimes E\to S^{k-1}E$.
Since $\deg (V\otimes E)=0$ and $S^{k-1}E$ is assumed to be stable, the morphism must be surjective, so $k=\rk S^{k-1}E
\leq \rk (V\otimes E)=2r$.
That is, $\frac{k+1}{2}\leq r$ if $k$ is odd or $\frac{k}{2}\leq r$ otherwise.

Due to Remark \ref{first}, when $S^k E$ is not stable, we can find a destabilizing subbundle $V\to S^k E$ of rank $r\leq \frac{k+1}{2}$ if $k$ is odd or $r\leq \frac{k}{2}$ otherwise, and each equality holds if $S^{k-1}E$ is stable as shown above.\linebreak
Because a destabilizing subbundle of $V$ destabilizes $S^k E$ in a smaller rank if it exists, $V$ must be stable.
\end{proof}

\begin{Thm}\label{higher}
Let $k\geq 2$.
If $S^{k-1}E$ is stable but $S^k E$ is not stable, then $E$ corresponds to one of the following cases.
\begin{itemize}
\item[(1)] $S^2 E$ is destabilized by a subbundle of rank $1$
\item[(2)] $S^3 E$ is destabilized by a subbundle of rank $2$
\item[(3)] $S^4 E$ is destabilized by a subbundle of rank $2$
\item[(4)] $S^6 E$ is destabilized by a subbundle of rank $3$
\end{itemize}
In particular, $S^k E$ is stable for every $k\geq 2$ if $S^m E$ is stable for all $m\leq 6$.
\end{Thm}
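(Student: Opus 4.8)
The plan is to build on the reduction already established. By Proposition~\ref{reduction}, if $S^{k-1}E$ is stable while $S^kE$ is not, then $S^kE$ has a \emph{stable} destabilizing subbundle $V\hookrightarrow S^kE$ with $\deg V=0$ and $\rk V=\lceil k/2\rceil$ (that is $\frac{k+1}{2}$ for $k$ odd and $\frac{k}{2}$ for $k$ even). These are exactly the ranks recorded in (1)--(4) at $k=2,3,4,6$, so the whole content of the theorem is the numerical assertion $k\in\{2,3,4,6\}$; the closing ``in particular'' will then drop out from a minimal counterexample. The algebraic input I would carry along is the surjection coming from \eqref{str}: the inclusion $V\to S^kE$ yields a nonzero map $V\otimes E\to S^{k-1}E$, and this map is \emph{surjective} since $S^{k-1}E$ is stable and $\deg(V\otimes E)=0$.

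First I would dispose of the odd values $k\geq 5$ algebraically. As $V$ and $E$ are stable of degree $0$ they are unitary by Narasimhan--Seshadri, so $V\otimes E$ is polystable of degree $0$. For odd $k$ the surjection $V\otimes E\twoheadrightarrow S^{k-1}E$ has kernel a line bundle $K$ of degree $0$ (the ranks are $2r=k+1$ and $k$), and a saturated degree-$0$ line subbundle of a polystable degree-$0$ bundle is a direct summand. Hence $\Hom(K,V\otimes E)\neq0$, and $E^\vee\cong E$ turns this into $\Hom(K\otimes E,V)\neq0$. But $K\otimes E$ is semistable of slope $0$ and rank $2$ whereas $V$ is stable of rank $r$; a nonzero map forces its (necessarily slope-$0$) image to be all of $V$, so $r\leq 2$ and $k\leq 3$. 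Thus among odd $k$ only $k=3$ persists.

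The delicate part is the even case, and to handle it---and at the same time to read off the exact list---I would pass to the representation-theoretic side. Write $E\leftrightarrow\rho\colon\pi_1(C)\to SU(2)$, which is irreducible because $E$ is stable with trivial determinant, and let $G\subseteq SU(2)$ be the Zariski closure of $\rho(\pi_1(C))$; then stability of $S^mE$ is equivalent to irreducibility of the $G$-module $S^m\CC^2$. From the Clebsch--Gordan decomposition $S^m\CC^2\otimes S^m\CC^2\cong\bigoplus_{j=0}^m S^{2j}\CC^2$ and self-duality one gets $\dim\Hom_G(S^m\CC^2,S^m\CC^2)=\sum_{j=0}^m\dim (S^{2j}\CC^2)^G$, so $S^mE$ is stable if and only if $(S^{2j}\CC^2)^G=0$ for every $1\leq j\leq m$. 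Therefore the hypothesis ``$S^{k-1}E$ stable, $S^kE$ not stable'' says exactly that $k=j_0$, where $j_0:=\min\{j\geq1:(S^{2j}\CC^2)^G\neq0\}$.

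It remains to pin down the possible $j_0$. Since $E$ is stable, $G$ acts irreducibly on $\CC^2$, which excludes the cyclic subgroups and the maximal torus, and $G\neq SU(2)$ because otherwise every $S^mE$ would be stable. By the classification of closed subgroups of $SU(2)$, $G$ is then the torus normalizer $N(U(1))$ or a binary polyhedral group (dihedral, tetrahedral, octahedral, icosahedral); for these $\bigoplus_j (S^{2j}\CC^2)^G$ is the ring of invariant binary forms $\CC[x,y]^G$, whose lowest positive degree is $4,4,6,8,12$ respectively, so that $j_0\in\{2,2,3,4,6\}$. Hence $k=j_0\in\{2,3,4,6\}$, matching (1)--(4). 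The main obstacle is precisely this even-$k$ exclusion: ruling out $k=5$ and all $k\geq7$ is equivalent to checking that no irreducibly acting closed subgroup of $SU(2)$ has $j_0\notin\{2,3,4,6\}$, and this genuinely needs the ADE classification and the classical invariant degrees rather than a formal manipulation of \eqref{basic} and \eqref{str}. For the final ``in particular'', if some $S^kE$ with $k\geq2$ were not stable, then the minimal such $k$ has $S^{k-1}E$ stable, so $k\in\{2,3,4,6\}$ and in particular $k\leq6$; consequently, if $S^mE$ is stable for all $m\leq6$ then $S^kE$ is stable for all $k\geq2$.
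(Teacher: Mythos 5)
Your proposal is correct, but it reaches the crucial exclusion by a genuinely different method than the paper. The paper's proof stays entirely on the curve: after the same reduction via Proposition~\ref{reduction} and the surjection $V\otimes E\to S^{k-1}E$ from \eqref{str}, it treats odd $k$ essentially as you do (a nonzero map $L\otimes E\to V$ between stable bundles of slope $0$ forces $\rk V=2$, hence $k=3$), and for even $k$ it dualizes to obtain a surjection $S^{k+1}E\to S^{k-1}E$ with kernel $Q$ of rank $2$, then uses $Q\otimes E\otimes E\cong Q\oplus(Q\otimes S^2E)$ together with the stability of $S^{k-1}E$ to force one of $Q\to S^{k-1}E$ or $Q\otimes S^2E\to S^{k-1}E$ to be surjective, whence $k=\rk S^{k-1}E\leq 6$. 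You instead settle the even case (indeed, the whole list at once) through the Narasimhan--Seshadri dictionary: stability of $S^mE$ equals irreducibility of $S^m\CC^2$ over the closure $G$ of the image of the associated representation, so the minimal unstable $k$ equals half the lowest positive degree of invariants of $G$ on $\CC[x,y]$, and the classification of the irreducibly acting proper closed subgroups of $\mathrm{SU}(2)$ pins those degrees to $4,4,6,8,12$, i.e.\ $k\in\{2,3,4,6\}$. Each route buys something: the paper's argument is self-contained sheaf theory, needs no classification input, and produces exact sequence \eqref{S5}, which is reused in the proof of Theorem~\ref{main}; your argument is sharper and more structural, since the paper's even case only yields the bound $k\leq 6$ (it does not by itself rule out odd $k=5$ without the separate odd-case argument), whereas yours identifies exactly which group corresponds to each case (torus normalizer or binary dihedral for $k=2$, binary tetrahedral, octahedral, icosahedral for $k=3,4,6$) and thereby explains where the numbers $2,3,4,6$ come from.

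Two small repairs. First, you define $G$ as the \emph{Zariski} closure of $\rho(\pi_1(C))$ but then invoke the classification of closed subgroups of $\mathrm{SU}(2)$; take $G$ to be the closure of the image in $\mathrm{SU}(2)$ in the usual topology (or observe that the fixed vectors in $S^{2j}\CC^2$ are the same for the image, its topological closure, and its Zariski closure, because stabilizers of vectors are closed in either topology). Second, in your odd-case argument the detour through polystability and direct summands is unnecessary: the kernel inclusion $K\to V\otimes E$ is already a nonzero element of $\Hom(K,V\otimes E)$, and tensor-hom adjunction with $E^\vee\cong E$ gives $\Hom(K\otimes E,V)\neq 0$ directly, after which your stability argument proceeds unchanged.
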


\begin{proof}
From the proof of Proposition \ref{reduction}, we observe that if $S^{k-1} E$ is stable but $S^k E$ is not stable, then there exist an injection $V\to S^k E$ and a surjection $V\otimes E\to S^{k-1}E$ for some stable vector bundle $V$ of degree $0$ and rank $r=\frac{k+1}{2}$ if $k$ is odd or $r=\frac{k}{2}$ otherwise.

If $k$ is odd, then $2r=k+1$, and the surjection $V\otimes E\to S^{k-1}E$ gives the exact sequence
\[
0\to L\to V\otimes E\to S^{k-1}E\to 0
\]
for some line bundle $L$ of degree $0$, and the injection $L\to V\otimes E$ induces a nonzero morphism $L\otimes E\cong L\otimes E^\vee\to V$.
This is possible only if $r=\rk V=2$ because $L\otimes E$ and $V$ are stable of the same degree.
Thus it implies that $k=3$.

Otherwise, if $k$ is even, then $2r=k$, so the surjection $V\otimes E\to S^{k-1}E$ becomes an isomorphism $V\otimes E\cong S^{k-1}E$.
On the other hand, the injection $V\to S^k E$ gives a nonzero morphism $V\otimes E\to S^{k+1}E$ as \eqref{str}.
By taking the dual, a nonzero morphism $S^{k+1}E\cong (S^{k+1}E)^\vee\to (V\otimes E)^\vee\cong (S^{k-1}E)^\vee\cong S^{k-1}E$ is obtained, which must be surjective due to the stability of $S^{k-1}E$.
Then, from the exact sequence
\[\label{S5}
0\to Q\to S^{k+1}E\to S^{k-1}E\to 0
\tag{4.2}
\vspace{-0.2em}
\]
for some vector bundle $Q$ of rank $2$ and degree $0$,
we can see that $S^{k+1}E$ is destabilized by a subbundle $Q\to S^{k+1}E$ of rank $2$.
Using the injections $S^{k+1}E\to S^k E\otimes E$ and $S^k E\to S^{k-1}E\otimes E$ twisted by $E$, the injection $Q\to S^{k+1}E$ induces an injection $Q\to S^{k-1}E\otimes E\otimes E$, and it yields a nonzero morphism $Q\otimes E\otimes E\to S^{k-1}E$ due to $\Hom(Q\otimes E\otimes E, S^{k-1}E)=\Hom(Q,S^{k-1}E\otimes E\otimes E)$ similar to \eqref{str}.
Since $S^{k-1}E$ is assumed to be stable, the morphism
\[
Q\oplus (Q\otimes S^2 E)
\cong Q\otimes (\Oo_C\oplus S^2 E)
\cong Q\otimes E\otimes E
\to S^{k-1}E
\]
is necessarily surjective, and again from the stability of $S^{k-1}E$, one of the morphisms $Q\to S^{k-1}E$ or $Q\otimes S^2 E\to S^{k-1}E$ must be surjective.
Therefore, $k=\rk S^{k-1}E\leq\max\{\rk Q,\,\rk(Q\otimes S^2E)\}=6$.
\end{proof}

Notice that cases (1) and (2) in the theorem are already studied in Section 2 and 3 respectively.
We will show that there are only a finite number of $E$ in cases (3) and (4) as in case (2).

\begin{Prop}\label{concluding}
Let $k\geq 3$.
Assume that there is a $k$-section $\pi: D\to C$ of zero self-intersection on $X$\linebreak and no such $m$-section for any $m<k$.
Then there is a surjection $\pi^*E\to R$ for some $R\in J_{2(k-1)(k-2)}(D)$.
\end{Prop}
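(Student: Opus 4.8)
The plan is to exhibit the quotient explicitly as $R=\Oo_D(C_1)$ and then to bound its order by playing an adjunction relation on $D$ against a norm computation, the two being linked by a geometric identity read off the fibre product $D\times_C D$. Since there is no $m$-section of zero self-intersection for $m<k$, in particular for $m\le k/2$, Lemma~\ref{integral} shows that $D$ is irreducible and reduced, hence smooth with $\pi$ unramified by Remark~\ref{interesting}. Restricting the tautological surjection $\Pi^*E\to\Oo_X(C_1)$ to $D$ gives a surjection $\pi^*E\to R:=\Oo_D(C_1)$; writing $D\sim kC_1+\bb f$ and $L=\Oo_C(\bb)$, the equality $D^2=0$ forces $\deg\bb=0$, so $\deg R=0$ and, as $\det\pi^*E=\Oo_D$, the kernel is $R^{-1}$. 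Proposition~\ref{k-2} then supplies the relation
\[
R^{k-2}=\pi^*L^{-1},
\]
which I shall call (A); because $R^{2(k-1)(k-2)}=\pi^*L^{-2(k-1)}$ by (A), it suffices to prove $\pi^*L^{2(k-1)}=\Oo_D$.

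Next I would compute the norm of $R$. Pushing forward $0\to\Oo_X(C_1-D)\to\Oo_X(C_1)\to\Oo_D(C_1)\to 0$ and evaluating $R^1\Pi_*$ by the stated formula gives, for $k\ge 3$, the exact sequence
\[
0\to E\to \pi_*R\to S^{k-3}E\otimes L^{-1}\to 0
\]
on $C$, so comparing determinants (with $\det S^{k-3}E=\Oo_C$) yields $\det\pi_*R=L^{-(k-2)}$ and hence
\[
\Nm_{D/C}(R)=L^{-(k-2)}\otimes\delta^{-1},\qquad \delta:=\det\pi_*\Oo_D,\ \ \delta^2=\Oo_C,
\]
which I shall call (C); here $\delta^2=\Oo_C$ because $\pi$ is unramified.

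The decisive and hardest step is to re-express $\pi^*\Nm_{D/C}(R)$ through $R$ itself. For this I would base change to $X_D:=\PP_D(\pi^*E)=D\times_C X$, with $q\colon X_D\to X$ of degree $k$ and projection $\Lambda\colon X_D\to D$, taking $q^*C_1$ as unisecant so that $\Lambda_*\Oo_{X_D}(q^*C_1)=\pi^*E$. Since $\pi$ is unramified, $q$ is \'{e}tale and $q^{-1}(D)=D\times_C D$ is smooth, so $q^*D=\Delta+\Gamma$ with $\Delta\cong D$ the section cut out by $\pi^*E\to R$ and $\Gamma$ a smooth off-diagonal curve disjoint from $\Delta$ and \'{e}tale over $D$. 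Computing the class $\Gamma\sim(k-1)\,q^*C_1+\Lambda^*(\pi^*L\otimes R^{-1})$ and running adjunction for $\Gamma$ (using $\omega_{\Gamma/D}=\Oo_\Gamma$, $\det\pi^*E=\Oo_D$, $q^*C_1|_\Gamma=p_2^*R|_\Gamma$, and $\pi\circ p_1=\pi\circ p_2$ on $D\times_C D$, where $p_1,p_2\colon D\times_C D\to D$ are the projections) collapses, after inserting (A), to the identity $p_1^*R=p_2^*R^{-1}$ on $\Gamma$. Applying the norm $\Nm_{p_1}$ to $p_1^*R\otimes p_2^*R$, which restricts to $R^2$ on $\Delta$ and, by that identity, to $\Oo_\Gamma$ on $\Gamma$, gives $\Nm_{p_1}(p_1^*R\otimes p_2^*R)=R^2$; on the other hand $\Nm_{p_1}(p_1^*R)=R^k$ and $\Nm_{p_1}(p_2^*R)=\pi^*\Nm_{D/C}(R)$ by base-change compatibility of the norm, so
\[
\pi^*\Nm_{D/C}(R)=R^{2-k}=\pi^*L,
\]
which I shall call (F), the last equality again by (A).

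Finally I would combine (C) and (F): pulling (C) back along $\pi$ and inserting (F) gives $\pi^*L=\pi^*L^{-(k-2)}\otimes\pi^*\delta^{-1}$, so $\pi^*L^{\,k-1}=\pi^*\delta^{-1}$; squaring and using $\delta^2=\Oo_C$ yields $\pi^*L^{\,2(k-1)}=\Oo_D$, whence $R\in J_{2(k-1)(k-2)}(D)$ by (A). The main obstacle is exactly the identity (F): it rests on a careful analysis of the off-diagonal component $\Gamma\subseteq D\times_C D$---its smoothness, its divisor class, and the clash between the two pullbacks of $R$ along $p_1$ and $p_2$---and it is precisely the $2$-torsion ambiguity $\delta$ from the covering that produces the factor $2$ in the order $2(k-1)(k-2)$.
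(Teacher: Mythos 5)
Your argument is correct, and it shares its skeleton with the paper's proof --- irreducibility and reducedness of $D$ from Lemma \ref{integral}, smoothness and unramifiedness from Remark \ref{interesting}, the surjection $\pi^*E\to R=\Oo_D(C_1)$, and your relation (A), $R^{k-2}=\pi^*L^{-1}$, from Proposition \ref{k-2} --- but the way you bound the order of $R$ is genuinely different and substantially longer. The paper pushes forward $0\to\Oo_X(-kC_1-\bb f)\to\Oo_X\to\Oo_D\to 0$ to get $\pi_*\Oo_D\cong\Oo_C\oplus(S^{k-2}E\otimes L^{-1})$, so $\det\pi_*\Oo_D=L^{-(k-1)}$, and then the single fact $(\det\pi_*\Oo_D)^2=\Oo_C$ for unramified coverings gives $L^{2(k-1)}=\Oo_C$ on $C$ at once; combined with (A) this finishes the proof. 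You instead push forward the twisted sequence to get $\det\pi_*R=L^{-(k-2)}$ (your (C), which is correct), and then do the real work in (F), $\pi^*\Nm_{D/C}(R)=R^{2-k}$, on the fiber square $D\times_C D=\Delta\sqcup\Gamma$. I checked the ingredients of (F): the class $\Gamma\sim(k-1)q^*C_1+\Lambda^*(\pi^*L\otimes R^{-1})$, the identity $p_1^*R=p_2^*R^{-1}$ on $\Gamma$, the component-wise norm computation, and the base-change compatibility $\Nm_{p_1}(p_2^*R)=\pi^*\Nm_{D/C}(R)$ all hold, so your proof closes. Two simplifications are worth recording. First, your adjunction derivation of $p_1^*R=p_2^*R^{-1}$ is roundabout: since $\Delta\sim q^*C_1+\Lambda^*R$ and $\Delta\cap\Gamma=\emptyset$, restricting $\Oo_{X_D}(\Delta)$ to $\Gamma$ gives $p_2^*R\otimes p_1^*R|_\Gamma=\Oo_\Gamma$ in one line, with no appeal to $\omega_{\Gamma/D}$ or to (A). Second, the fiber product can be bypassed entirely: computing $\det\pi_*\Oo_D=L^{-(k-1)}$ by the same pushforward technique you used for $\det\pi_*R$ yields $\Nm_{D/C}(R)=\det\pi_*R\otimes(\det\pi_*\Oo_D)^{-1}=L$ outright, and applying $\Nm_{D/C}$ to (A) gives $L^{k-2}=\Nm_{D/C}(\pi^*L^{-1})=L^{-k}$, i.e.\ the paper's conclusion $L^{2(k-1)}=\Oo_C$. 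Note also that this holds on $C$ itself, whereas your route only produces $\pi^*L^{2(k-1)}=\Oo_D$; that weaker statement still suffices for $R\in J_{2(k-1)(k-2)}(D)$, but the descent to $C$ is what the paper reuses later (Remark \ref{hightor}).
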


\begin{proof}
Let $D\sim kC_1+\bb f$ for some $\bb\in\Pic(C)$, necessarily satisfying $\deg\bb=0$.
Due to Lemma \ref{integral} and Remark \ref{interesting}, $D$ is smooth and $\pi: D\to C$ is unramified.
Also, note that the morphism $\phi: D\to X$ over $C$ gives a surjection $\pi^*E\to R$ on $D$ for $R=\phi^*\Oo_X(C_1)=\Oo_D(C_1)$ with $\deg R=D.C_1=(kC_1+\bb f).C_1=0$.

By applying \cite[I:~p.~248]{Lazarsfeld04} and comparing the determinants after pushing forward the exact sequence
\[
0\to\Oo_X(-kC_1-\bb f)\to \Oo_X\to \Oo_D\to 0
\]
on $X$ to $C$, we obtain that $L^{-2(k-1)}=(\det(\Oo_C\oplus(S^{k-2}E\otimes L^{-1})))^2=(\det \pi_*\Oo_D)^2=\Oo_C$ for $L=\Oo_C(\bb)$ because $\pi$ is unramified.
Then, by Proposition \ref{k-2}, we have
\[
R^{k-2}=\Oo_D((k-2)C_1)
=\Oo_D(-\bb f)=\pi^*L^{-1},
\]
and hence $R^{2(k-1)(k-2)}
=(\pi^*L^{-1})^{2(k-1)}
=\pi^*L^{-2(k-1)}
=\pi^*\Oo_C
=\Oo_D$.
\end{proof}

\begin{Rmk}\label{hightor}
In the proof of Proposition \ref{concluding}, when $E$ has trivial determinant and $S^k E$ is destabilized by a quotient line bundle $S^k E\to L$, it is shown that $L^{2(k-1)}=\Oo_C$.
This is a generalization of Remark \ref{twotor}.\linebreak
It is also compatible with $L^4=\Oo_C$ for $k=3$ (Theorem \ref{classification_rank_one}) and $L^3=\Oo_C$ for $k=4$ (Proposition \ref{last}).
\end{Rmk}

\begin{Cor}\label{finite}
Let $k\geq 3$.
Then there exist at most finitely many $E\in\SU_C(2,\Oo_C)$ such that $S^k E$ is destabilized by a line subbundle but $S^m E$ is not destabilized by a line subbundle for any $m<k$.
\end{Cor}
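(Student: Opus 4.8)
The plan is to bound the number of such $E$ by recovering each one, up to finitely many choices, from topological data attached to $C$. First I would restate the hypothesis geometrically: by the correspondence recalled in Remark \ref{interesting}, $S^m E$ is destabilized by a line subbundle precisely when $X=\PP_C(E)$ carries an $m$-section of zero self-intersection, so the hypothesis says that $X$ admits a $k$-section $D$ of zero self-intersection but no such $m$-section for any $m<k$. In particular the range $m\le k/2$ is excluded, so Lemma \ref{integral} applies and $D$ is irreducible and reduced; then Remark \ref{interesting} makes $D$ a smooth curve and $\pi=\Pi|_D:D\to C$ a connected unramified $k$-covering. Note also that $E$ is stable, since a strictly semistable $E$ would already admit a $1$-section of zero self-intersection. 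Since $\pi_1(C)$ is finitely generated, it has only finitely many subgroups of index $k$, hence there are only finitely many such coverings $\pi:D\to C$ to consider.

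Next I would feed this into Proposition \ref{concluding}: for each admissible $\pi$ it produces a surjection $\pi^*E\to R$ with $R\in J_{2(k-1)(k-2)}(D)$. As the torsion order is bounded in terms of $k$ alone, $J_{2(k-1)(k-2)}(D)$ is a finite set, so there are only finitely many pairs $(\pi,R)$. By the adjunction $\Hom(\pi^*E,R)=\Hom(E,\pi_*R)$, the surjection corresponds to a nonzero morphism $\varphi:E\to\pi_*R$, where $\pi_*R$ is a fixed bundle once $(\pi,R)$ is fixed. It then remains to show that, for a fixed $(\pi,R)$, only finitely many stable $E\in\SU_C(2,\Oo_C)$ can admit such a $\varphi$.

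The key point is that $\pi_*R$ is semistable of slope $0$. Its degree is $0$ because $\det\pi_*R=\det\pi_*\Oo_D\otimes\Nm_{D/C}(R)$ with $(\det\pi_*\Oo_D)^2=\Oo_C$ (as $\pi$ is unramified) and $\Nm_{D/C}(R)$ torsion (as $R$ is); semistability follows since $\pi$ is \'etale and $R$ has degree $0$ --- for instance, via Narasimhan--Seshadri, $\pi_*R$ corresponds to the representation of $\pi_1(C)$ induced from the unitary character of $\pi_1(D)$ determined by $R$, which is again unitary. Granting this, $\varphi$ must be injective: otherwise its kernel would be a rank-$1$ subsheaf of $E$ of negative degree (by stability of $E$), forcing the image $E/\ker\varphi$ to be a slope-positive subsheaf of the slope-$0$ semistable bundle $\pi_*R$, a contradiction. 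Thus $E\hookrightarrow\pi_*R$ as a stable slope-$0$ subsheaf, and a standard induction along a Jordan--H\"older filtration of $\pi_*R$ shows that $E$ is isomorphic to one of its Jordan--H\"older factors (composing $E$ into the top quotient of the filtration gives a map between stable sheaves of equal slope, which is either zero or an isomorphism; descend until it is nonzero). Since $\pi_*R$ has only finitely many Jordan--H\"older factors and there are only finitely many pairs $(\pi,R)$, the set of possible $E$ is finite.

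I expect the main obstacle to be the semistability of $\pi_*R$: unlike the generic statements already available in the paper (Beauville's result only guarantees stability of $\pi_*W$ for \emph{general} $W$), here $R$ is a very special torsion bundle, so I must argue semistability directly for an arbitrary degree-$0$ line bundle on an \'etale cover. This is where invoking the unitary-representation picture --- or, equivalently, a Galois-closure and projection-formula computation showing that $\pi^*\pi_*R$ is polystable of slope $0$ --- does the real work. The Jordan--H\"older reduction is then the clean mechanism that converts ``nonzero morphism into a fixed semistable bundle'' into an honest finiteness statement.
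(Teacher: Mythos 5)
Your proposal is correct and follows essentially the same route as the paper's proof: Proposition \ref{concluding} reduces everything to the finite data of unramified $k$-coverings $\pi:D\to C$ and torsion line bundles $R\in J_{2(k-1)(k-2)}(D)$, and the adjunction $\Hom(\pi^*E,R)=\Hom(E,\pi_*R)$ realizes the stable bundle $E$ as a subbundle, hence a Jordan--H\"older factor, of the fixed semistable bundle $\pi_*R$. The only difference is one of completeness: you explicitly justify the semistability of $\pi_*R$ (via induced unitary representations) and spell out the injectivity and Jordan--H\"older induction, steps the paper's one-paragraph proof leaves implicit.
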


\begin{proof}
The assertion follows from Proposition \ref{concluding} and the finiteness of the following data.
\begin{itemize}
\item the unramified $k$-coverings $\pi: D\to C$
\item the torsion line bundles on $D$ of order $2(k-1)(k-2)$
\item the direct summands of graded bundle of a Jordan-H\"{o}lder filtration associated to $\pi_*R$
\end{itemize}
By the adjoint property, a surjection $\pi^*E\to R$ gives a nonzero morphism $E\to \pi_*R$, so we can deduce that $E$ is a subbundle of $\pi_*R$ as $E$ is stable and $\deg E=\deg \pi_*R=0$.
\end{proof}

\begin{Lem}\label{generalization}
There exists the following exact sequence on $C$.
\[
0
\to S^{n-1}E\otimes S^{m-1}E
\to S^n E\otimes S^m E
\to S^{n+m}E
\to 0
\]
\end{Lem}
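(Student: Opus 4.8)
The plan is to realise both maps in the sequence as canonical morphisms of bundles and then to check exactness one fibre at a time, where the statement becomes the Clebsch--Gordan decomposition for a $2$-dimensional space. For the surjection I would take the multiplication map $\mu\colon S^nE\otimes S^mE\to S^{n+m}E$ of the symmetric algebra; it is surjective on each fibre, hence surjective, and its kernel $K$ is a subbundle (the quotient $S^{n+m}E$ being locally free) with
\[
\rk K=(n+1)(m+1)-(n+m+1)=nm=\rk\bigl(S^{n-1}E\otimes S^{m-1}E\bigr).
\]

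For the injection I would use that $E$ has rank $2$ with $\wedge^2E=\det E\cong\Oo_C$, and set
\[
\nu\colon S^{n-1}E\otimes S^{m-1}E\otimes\textstyle\wedge^2E\longrightarrow S^nE\otimes S^mE,\qquad a\otimes b\otimes(v\wedge w)\longmapsto (va)\otimes(wb)-(wa)\otimes(vb),
\]
where $va$ denotes the image of $v\otimes a$ under $E\otimes S^{n-1}E\to S^nE$. The right-hand side is antisymmetric in $v,w$, so $\nu$ is a well-defined bundle map, and after the chosen trivialisation $\wedge^2E\cong\Oo_C$ it has source $S^{n-1}E\otimes S^{m-1}E$. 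A one-line computation in the commutative algebra $\bigoplus_k S^kE$ gives $\mu\bigl((va)\otimes(wb)\bigr)=a\,b\,v\,w=\mu\bigl((wa)\otimes(vb)\bigr)$, whence $\mu\circ\nu=0$ and $\im\nu\subseteq K$. Specialising to $m=1$ recovers \eqref{basic}, which is a useful sanity check.

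To finish I would note that $\mu$ and $\nu$ are natural in $E$ and compatible with restriction to fibres, so the complex
\[
0\longrightarrow S^{n-1}E\otimes S^{m-1}E\xrightarrow{\ \nu\ }S^nE\otimes S^mE\xrightarrow{\ \mu\ }S^{n+m}E\longrightarrow 0
\]
restricts over each point $p\in C$ to the analogous complex for the $2$-dimensional space $V=E|_p$. Since all three ranks are constant, a complex of bundles that is exact on every fibre is exact, and it therefore suffices to treat a single $V$. There the Clebsch--Gordan rule $S^aV\otimes S^bV\cong\bigoplus_{i=0}^{\min(a,b)}S^{a+b-2i}V\otimes(\wedge^2V)^{\otimes i}$ shows that $\mu$ has image the top summand $S^{n+m}V$ and kernel the sum of the remaining summands, while the same rule applied to the source matches $S^{n-1}V\otimes S^{m-1}V\otimes\wedge^2V$ summand-for-summand with that kernel; by Schur's lemma $\nu$ is an isomorphism onto the kernel once one checks it is nonzero on each irreducible piece, which a highest-weight computation confirms.

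The main obstacle is exactly this last point — the exactness in the middle over a single fibre, equivalently the injectivity of $\nu$ and the identification $\im\nu=K$. Everything else (surjectivity of $\mu$, the relation $\mu\circ\nu=0$, the rank bookkeeping) is formal. Packaging the linear algebra through the ``fibrewise-exact implies exact'' principle confines the only real computation to the classical $\mathrm{GL}_2$ representation theory; an alternative route is induction on $m$ from \eqref{basic} via the snake lemma, but that seems less transparent than producing $\mu$ and $\nu$ directly.
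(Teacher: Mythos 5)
Your proof is correct, but it takes a genuinely different route from the paper's own argument, which never leaves the ruled surface: there one shows that the evaluation map $\Pi^*S^mE=\Pi^*\Pi_*\Oo_X(mC_1)\to\Oo_X(mC_1)$ is surjective (fiberwise global generation plus Nakayama), identifies its kernel as $\Pi^*S^{m-1}E\otimes\Oo_X(-C_1)$ by restricting to fibers and using Grauert's theorem and relative Serre duality, and then obtains the sequence on $C$ by twisting by $\Oo_X(nC_1)$ and pushing forward. Your route --- the explicit multiplication $\mu$, the explicit antisymmetrized map $\nu$ twisted by $\wedge^2E$, and fiberwise Clebsch--Gordan --- is in substance the representation-theoretic proof that the paper itself records in the remark immediately following the lemma (attributed to its reviewer, via \cite[Exercise 11.11]{Fulton-Harris91} applied to the principal $\mathrm{SL}_2\CC$-bundle underlying $E$), except that you construct the maps by hand instead of invoking the associated-bundle formalism. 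What your version buys: it is self-contained multilinear algebra, and because you carry the $\wedge^2E$-twist explicitly it proves the statement for arbitrary determinant, whereas the paper's proof uses the standing assumption $\det E=\Oo_C$ at several points (e.g. $\omega_{X/C}\cong\Oo_X(-2C_1)$ and $\Pi^*S^{m-1}E\cong\Pi^*(S^{m-1}E)^\vee$). What the paper's version buys: the sequence is produced directly as a pushforward from $X=\PP_C(E)$, which is exactly the form in which it is combined with the other pushforward arguments of Sections 3 and 4. The only soft spot in your write-up is that the nonvanishing of $\nu$ on each irreducible summand is asserted rather than computed; the claim is true, and in fact you can bypass Schur's lemma entirely by checking injectivity of $\nu$ on a fiber in a monomial basis (the relation defining $\ker\nu$ forces all coefficients to vanish via the boundary cases), after which $\mu\circ\nu=0$ and the rank count $\rk(S^{n-1}E\otimes S^{m-1}E)=nm=\rk\ker\mu$ give exactness in the middle; so this is a matter of exposition, not a gap.
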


\begin{proof}
By restricting the natural morphism $\Pi^*\Pi_*\Oo_X(mC_1)\to \Oo_X(mC_1)$ to the fiber $Pf=\Pi^{-1}(P)$ of $X$,\linebreak
we have the morphism $\Pi^*\Pi_*\Oo_X(mC_1)|_{Pf}
\to\Oo_X(mC_1)|_{Pf}$ over each $P\in C$,
which is identified with the evaluation morphism $H^0(\PP^1,\Oo_{\PP^1}(mC_1))\otimes\Oo_{\PP^1}\to\Oo_{\PP^1}(m)$ from Grauert's theorem,
\[
\Pi^*\Pi_*\Oo_X(mC_1)|_{Pf}
\cong (\Pi_*\Oo_X(mC_1)\otimes \CC(P))\otimes \Oo_{Pf}
\cong H^0(Pf,\Oo_X(mC_1)|_{Pf})\otimes \Oo_{Pf}.
\]
As $\Oo_X(mC_1)|_{Pf}\cong\Oo_{\PP^1}(m)$ is generated by its global sections, the evaluation morphism is surjective over each $P\in C$, so the morphism $\Pi^*\Pi_*\Oo_X(mC_1)\to \Oo_X(mC_1)$ is surjective on $X$ by Nakayama's lemma.\linebreak
Thus we have the exact sequence
\[\label{gen}\tag{4.3}
0\to K\otimes \Oo_X(-C_1)\to \Pi^*\Pi_*\Oo_X(mC_1)\to \Oo_X(mC_1)\to 0
\]
for some vector bundle $K$ on $X$ of rank $m$ and $c_1(K)=\Oo_X$ which satisfies $K|_{Pf}\cong {\Oo_{Pf}}^{\oplus m}$ for each $P\in C$.\linebreak
Indeed, note that $K|_{Pf}\otimes\Oo_{\PP^1}(-1)\cong\Oo_{\PP^1}(a_1)\oplus\cdots\oplus \Oo_{\PP^1}(a_m)$ for some $a_i\leq 0$ with $a_1+\cdots+a_m=-m$, and from the associated long exact sequence of cohomology groups,
\[
0
\to H^0(\PP^1,K|_{Pf}\otimes\Oo_{\PP^1}(-1))
\to H^0(\PP^1,H^0(\PP^1,\Oo_{\PP^1}(mC_1))\otimes\Oo_{\PP^1})
\to H^0(\PP^1,\Oo_{\PP^1}(m)),
\]
we can observe that $H^0(K|_{Pf}\otimes\Oo_{\PP^1}(-1))
=H^0(\Oo_{\PP^1}(a_1))\oplus\cdots\oplus H^0(\Oo_{\PP^1}(a_m))=0$ because the last morphism is bijective.
Thus $a_i\leq -1$, and we have $a_i=-1$ for all $i=1,\,\ldots,\,m$.

By pushing forward exact sequence \eqref{gen} on $X$ to $C$ after twisting by $\Oo_X(-C_1)$, we have
\[
S^{m-1}E
\cong R^1\Pi_* (K\otimes \Oo_X(-2C_1))
\cong (\Pi_* K)^\vee
\]
from $\omega_{X/C}\cong\Oo_X(-2C_1)$ and the relative Serre duality.
Since $\Pi^*S^{m-1}E\cong \Pi^*(S^{m-1}E)^\vee\cong \Pi^*\Pi_*K$,\linebreak we get a morphism $\Pi^*S^{m-1}E\cong\Pi^*\Pi_*K\to K$ on $X$ where the latter morphism $\Pi^*\Pi_*K\to K$ is surjective\linebreak as $K|_{Pf}\cong {\Oo_{Pf}}^{\oplus m}$ is generated by its global sections for all $P\in C$.
Because $S^{m-1}E$ is of rank $m$, we have $K\cong \Pi^*S^{m-1}E$, and there exists the following exact sequence on $X$.
\vspace{-0.15em}
\[
0
\to \Pi^*S^{m-1}E\otimes \Oo_X(-C_1)
\to \Pi^*S^m E
\to \Oo_X(mC_1)
\to 0
\vspace{-0.15em}
\]
Therefore, we obtain the desired exact sequence by pushing forward the above sequence to $C$ after twisting by $\Oo_X(n C_1)$.
\end{proof}

\begin{Rmk}
The reviewer points out that as $E$ is the associated vector bundle of some principal $\mathrm{SL}_2\CC$-bundle $P\to C$, $S^kE$ is the associated vector bundle of $P(S^kV)$ where $V=\CC^2$ is the standard representation of $\mathrm{SL}_2\CC$, hence Lemma \ref{generalization} is a consequence of \cite[Exercise 11.11]{Fulton-Harris91}.
\end{Rmk}

\begin{Thm}\label{main}
If $S^2 E$ is stable, then every $S^k E$ is stable except for finitely many $E\in \SU_C(2,\Oo_C)$.
\end{Thm}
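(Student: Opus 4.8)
The plan is to bootstrap from Theorem~\ref{higher}. Its closing assertion is that, once $S^2 E$ is stable, every $S^k E$ is stable provided $S^m E$ is stable for all $m \leq 6$; since the first level at which stability can break is $m \in \{2,3,4,6\}$, excluding $m=2$ shows that the locus
\[
\{E \in \SU_C(2,\Oo_C) : S^2 E \text{ stable and } S^k E \text{ not stable for some } k \geq 3\}
\]
coincides with the locus where $S^2 E$ is stable but $S^{m_0} E$ is the first non-stable symmetric power for some $m_0 \in \{3,4,6\}$. First I would stratify by this minimal level $m_0$ and prove finiteness on each stratum, so that $S^{m_0-1} E$ is stable throughout.

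The stratum $m_0 = 3$ is immediate from Corollary~\ref{clear_form}: with $S^2 E$ stable, strict semi-stability of $S^3 E$ is equivalent to $S^4 E$ being destabilized by a line subbundle, and the corollary already records that only finitely many such $E$ exist. For the strata $m_0 \in \{4,6\}$ the uniform scheme is to reduce a higher-rank destabilization to a destabilizing \emph{line} subbundle of $S^l E$ at a controlled level $l$, and then invoke Corollary~\ref{finite}. Keeping $l$ bounded is essential: Corollary~\ref{finite} is organized level by level, so an unbounded range of $l$ would only give a countable union of finite sets, whereas a uniform bound $l \leq L$ makes $\bigcup_{v=3}^{L}\{E : l_0(E)=v\}$ genuinely finite, where $l_0(E)$ denotes the minimal line-destabilization level.

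To carry out the reduction, Proposition~\ref{reduction} forces the destabilizer $V \to S^{m_0} E$ to have the minimal rank $m_0/2$, and the proof of Theorem~\ref{higher} supplies the isomorphism $V \otimes E \cong S^{m_0-1} E$ together with $\det V \in J_2(C)$. In case (3), where $\rk V = 2$, I would follow Propositions~\ref{last} and~\ref{sumup}: use the self-duality $(S^{m_0}E)^\vee \cong S^{m_0}E$ to obtain a second destabilizing subbundle from $V^\vee \cong V \otimes \det V$, apply Lemma~\ref{strong_and_concise} to each (available since, by Proposition~\ref{reduction}, no $S^m E$ with $m \leq 4$ is destabilized by a line subbundle), and adjoin the inclusion coming from one subbundle to the quotient dual to the other to produce a nonzero global section of $\Oo_X(l C_1)$ on $X = \PP_C(E)$, i.e. a destabilizing line subbundle of $S^l E$ with $l$ bounded by roughly $2 m_0 = 8$; Lemma~\ref{generalization} is the device that transports destabilizations between the symmetric powers involved. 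The self-dual subcase $V \cong V^\vee$ I would treat separately as in the $L = \Oo_C$ analysis of Proposition~\ref{last}, building a still higher section and deriving a contradiction with Lemma~\ref{integral} or with the stability of $S^2 E$.

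The hard part is genuinely cases (3) and (4). One difficulty is that, unlike case (2) where both destabilizers were twists $E \otimes L^{\pm 1}$ of the \emph{same} bundle, here $V$ and $V^\vee$ are in general distinct, so the adjoining that manufactures a section of $\Oo_X(l C_1)$ must be adapted and the resulting level $l$ must be bounded absolutely. A further difficulty specific to case (4) is that the destabilizer of $S^6 E$ has rank three, whereas Lemma~\ref{strong_and_concise} is stated only for rank two; I would route around this through the auxiliary rank-two subbundle $Q \to S^7 E$ produced in the proof of Theorem~\ref{higher} from $0 \to Q \to S^7 E \to S^5 E \to 0$, splitting into the dichotomy that either $S^7 E$ already carries a destabilizing line subbundle (done, at level $7$) or Lemma~\ref{strong_and_concise} applies to $Q$ and the adjoining argument yields a line subbundle of $S^l E$ with $l$ bounded by roughly $2\cdot 6 = 12$. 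Verifying that all these manipulations close up with a single absolute bound $L$, so that Corollary~\ref{finite} summed over $3 \leq v \leq L$ gives finiteness, is the crux on which the theorem rests.
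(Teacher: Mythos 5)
Your overall skeleton coincides with the paper's: stratify by the minimal level $k\in\{3,4,6\}$ furnished by Theorem \ref{higher}, settle $k=3$ by Corollary \ref{clear_form}, and for $k=4,6$ reduce to a destabilizing \emph{line} subbundle of some $S^l E$ with $l$ uniformly bounded, so that Corollary \ref{finite}, summed over finitely many levels, yields finiteness; your remark that an absolute bound on $l$ is indispensable is exactly right. The genuine gap is the reduction itself for $k=4,6$, which is the entire content of the paper's argument. Your plan --- imitate Propositions \ref{last} and \ref{sumup} by producing a ``second destabilizing subbundle'' $V^\vee\cong V\otimes\det V$ of $S^{k}E$, applying Lemma \ref{strong_and_concise} to both, and adjoining --- fails at the first step: dualizing the inclusion $V\to S^{k}E$ exhibits $V^\vee$ only as a \emph{quotient} of $S^{k}E$, not a subbundle. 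In Proposition \ref{last} the dual of the quotient was again a rank-$2$ subbundle only because $\rk S^3E=4=2+2$; for $S^4E$ (rank $5$) the dual of the quotient has rank $3$, and for $S^6E$ the destabilizer itself has rank $3$, so in neither case are there two rank-$2$ subbundles to feed into Lemma \ref{strong_and_concise}. Nor can one simply adjoin the subsheaf $\Oo_X(-kC_1)\otimes\Pi^*\det V$ of $\Pi^*V$ to the quotient $\Pi^*V^\vee\to\Oo_X(kC_1)\otimes\Pi^*(\det V)^{-1}$ through $\Pi^*S^kE$: in Proposition \ref{sumup}, vanishing of the analogous composite forced a nonzero section of a nontrivial degree-zero line bundle precisely because the intermediate bundle $\Pi^*E$ had rank $2$, whereas here the intermediate bundle has rank $k+1$, the relevant kernel has rank $k$, and vanishing yields no contradiction. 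Indeed the paper's final dichotomy (a section of $S^{2k}E$ \emph{or} of $S^{2k-2}E$) shows that such a composite can genuinely vanish, so no adjoining argument aiming only at level $2k$ can succeed in general. The same objection applies verbatim to your case-(4) route through $Q\to S^7E$. You flag this step as ``the crux on which the theorem rests,'' but flagging it is not proving it, and your bounds $l\leq 8$, $l\leq 12$ are asserted rather than derived.

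The paper closes this gap by abandoning the surface-geometric adjoining altogether in favor of a purely cohomological argument. For even $k$, the proof of Theorem \ref{higher} already provides the surjection $S^{k+1}E\to S^{k-1}E$ of \eqref{S5}, hence $H^0(S^{k-1}E\otimes S^{k+1}E)\neq 0$. Applying Lemma \ref{generalization} with $(n,m)=(k-1,k+1)$ gives $H^0(S^{2k}E)\neq 0$ or $H^0(S^{k-2}E\otimes S^kE)\neq 0$; in the latter case a second application with $(n,m)=(k-2,k)$ gives $H^0(S^{2k-2}E)\neq 0$, the remaining term $H^0(S^{k-3}E\otimes S^{k-1}E)$ dying against the stability of $S^{k-1}E$. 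Thus some $S^lE$ with $4\leq l\leq 12$ acquires a nonzero global section, i.e.\ a destabilizing line subbundle, with no rank restrictions and no nonvanishing issue ever arising, and Corollary \ref{finite} finishes exactly as you intended. In short, Lemma \ref{generalization} --- which appears in your sketch only as an auxiliary ``device'' --- is the actual engine that must replace your adjoining step.
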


\begin{proof}
Assume that $S^k E$ is not stable and $S^m E$ is stable for all $m<k$.
By Theorem \ref{higher}, it remains to treat $k=3$, $4$, and $6$.
For $k=3$, we know from Corollary \ref{clear_form} that $S^4 E$ is destabilized by a line subbundle.
In the cases of even $k=4$, $6$, we can observe from exact sequence \eqref{S5} in the proof of Theorem \ref{higher} that there exists a surjection $S^{k+1}E\to S^{k-1} E$ so that $H^0(S^{k-1}E\otimes S^{k+1}E)=H^0((S^{k+1}E)^\vee\otimes S^{k-1}E)\neq 0$.
By taking global sections of the exact sequence of Lemma \ref{generalization} with $n={k-1}$ and $m={k+1}$,
\vspace{-0.2em}
\[
0\to S^{k-2} E\otimes S^k E\to S^{k-1} E\otimes S^{k+1} E\to S^{2k} E\to 0,
\vspace{-0.2em}
\]
we have either $H^0(S^{k-2}E\otimes S^k E)\neq 0$ or $H^0(S^{2k}E)\neq 0$ from $H^0(S^{k-1}E\otimes S^{k+1}E)\neq 0$.
If $H^0(S^{2k}E)= 0$, then $H^0(S^{k-2}E\otimes S^k E)\neq 0$, and in this case, by taking global sections of the exact sequence of the same Lemma with $n={k-2}$ and $m={k}$,
\vspace{-0.2em}
\[
0\to S^{k-3} E\otimes S^{k-1} E\to S^{k-2} E\otimes S^{k} E\to S^{2k-2} E\to 0,
\vspace{-0.2em}
\]
we have either $H^0(S^{k-3}E\otimes S^{k-1} E)\neq 0$ or $H^0(S^{2k-2}E)\neq 0$, where the former is impossible because, if then, it gives a nonzero morphism $S^{k-3}E\cong (S^{k-3}E)^\vee\to S^{k-1}E$ which destabilizes $S^{k-1}E$.
Therefore, from the assumption that $H^0(S^{k-1}E\otimes S^{k+1}E)\neq 0$, we obtain either $H^0(S^{2k}E)\neq 0$ or $H^0(S^{2k-2}E)\neq 0$.
In other words, either $S^{2k} E$ or $S^{2k-2}E$ is destabilized by a line subbundle.

Therefore, if $S^2 E$ is stable but $S^k E$ is not stable for some $k\geq 3$, then $S^l E$ is destabilized by a line subbundle for some $4\leq l\leq 12$.
However, since $S^2 E$ is stable, $S^3 E$ cannot be destabilized by a line subbundle due to Proposition \ref{tosmall}.
Thus $E$ is contained in the collection
\vspace{-0.3em}
\[
\bigcup_{l=4}^{12}
\left\{\text{$E\in\SU_C(2,\Oo_C)$}
\ |\ \text{$S^l E$ is destabilized by a line subbundle but $S^m E$ is not for any $m<l$}
\right\}
\vspace{-0.2em}
\]
which must be finite thanks to Corollary \ref{finite}.
\end{proof}

As the locus of strictly semi-stable $E\in \SU_C(2,\Oo_C)$ is given by the image of a map from the Jacobian variety, the locus is closed.
Also, the locus of $E\in \SU_C(2,\Oo_C)$ with strictly semi-stable $S^2 E$ is closed in $\SU_C(2,\Oo_C)$ because it is the union of finitely many images of maps from the Prym varieties.
Since a finite union of proper closed subsets is still a proper closed subset, the theorem gives the following fact.

\begin{Cor}\label{general}
If $E$ is stable, then every $S^k E$ is stable for general $E\in \SU_C(2,\Oo_C)$.
\end{Cor}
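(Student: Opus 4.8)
The plan is to exhibit a proper closed subset of $\SU_C(2,\Oo_C)$ outside of which every symmetric power $S^k E$ is stable; since a general point of $\SU_C(2,\Oo_C)$ is itself stable, this yields the assertion. Write $Z\subseteq\SU_C(2,\Oo_C)$ for the locus of those $E$ such that $S^k E$ fails to be stable for some $k\geq 2$. I would split $Z=Z_1\cup Z_2$, where $Z_1$ is the locus where $S^2 E$ is not stable and $Z_2$ is the locus where $S^2 E$ is stable but $S^k E$ is not stable for some $k\geq 3$. It then suffices to show that $Z_1$ is a proper closed subset and that $Z_2$ is finite.

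For $Z_1$, I would invoke the description already recorded above: $Z_1$ is the union of the locus of strictly semi-stable $E$ and the locus of stable $E$ with strictly semi-stable $S^2 E$. The former is the image of the morphism $J^0(C)\to\SU_C(2,\Oo_C)$, $L\mapsto[L^{-1}\oplus L]$, hence closed of dimension $g$. By Proposition \ref{dim}, the latter is the union, over the finitely many nontrivial $M\in J_2(C)$, of the images $\Phi_A(\Pr(B/C))$, each of which is closed of dimension $g-1$ since $\Pr(B/C)$ is a complete abelian variety and $\Phi_A$ is a morphism. A finite union of closed subsets is closed, so $Z_1$ is closed; and since $\dim\SU_C(2,\Oo_C)=3g-3>g$ for $g\geq 2$, the subset $Z_1$ is proper.

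For $Z_2$, Theorem \ref{main} asserts precisely that there are only finitely many $E\in\SU_C(2,\Oo_C)$ with $S^2 E$ stable but $S^k E$ not stable for some $k\geq 3$; thus $Z_2$ is finite, in particular closed and proper. Consequently $Z=Z_1\cup Z_2$ is a proper closed subset of $\SU_C(2,\Oo_C)$, its complement $U$ is dense and open, and every $E\in U$ has all of its $S^k E$ stable. As a general $E\in\SU_C(2,\Oo_C)$ lies in $U$, and is automatically stable since the strictly semi-stable locus is contained in $Z_1$, the corollary follows. The only points requiring attention, namely the dimension bounds $g,\,g-1<3g-3$ guaranteeing properness and the closedness of the Jacobian and Prym images, are routine and are in fact already noted in the discussion preceding Proposition \ref{dim}; the substantive input lies entirely in Theorem \ref{main}, so the sole task is to assemble these ingredients correctly, and I do not anticipate a genuine obstacle beyond this bookkeeping.
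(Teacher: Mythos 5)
Your proposal is correct and follows essentially the same route as the paper: the paper likewise decomposes the non-stable locus into the Jacobian image (strictly semi-stable $E$), the finitely many Prym-variety images from Proposition \ref{dim} (stable $E$ with strictly semi-stable $S^2E$), and the finite set supplied by Theorem \ref{main}, then concludes since a finite union of proper closed subsets is proper and closed. Your additional dimension bookkeeping ($g,\,g-1<3g-3$) merely makes explicit what the paper records in the discussion following Proposition \ref{dim}.
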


\subsection{Relation with \'{E}tale Triviality}
$E$ is said to be \emph{\'{e}tale-trivial} if there exists an unramified finite covering $\eta:C'\to C$ with $\eta^*E=\Oo_{C'}\oplus \Oo_{C'}$.
Thus if $E$ is \'{e}tale-trivial, then the trivialization induces a morphism $\phi: C'\to X$ over $C$ with $\deg \phi^*\Oo_X(C_1)=0$ and $\phi(C')\equiv kC_1+\deg\phi^*\Oo_X(C_1) f= kC_1$ for some $k>0$.
Therefore, the image $\phi(C')$ becomes a $k$-section of zero self-intersection on $X$ so that $S^k E$ is destabilized by a line subbundle.
We will see when the converse holds.

\newgeometry{top=85pt,bottom=80pt,left=80pt,right=80pt,footskip=30pt}
\begin{Lem}\label{pull-back}
Assume that $E$ splits over an unramified finite covering $\pi: D\to C$ as $\pi^*E=R^{-1}\oplus R$ for some line bundle $R$ on $D$.
Then $E$ is \'{e}tale-trivial if and only if $R\in J_m(D)$ for some $m>0$.
\end{Lem}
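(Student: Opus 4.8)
The plan is to prove the two implications separately, using the dictionary between torsion line bundles and unramified cyclic coverings reviewed in Section~2.1 as the common tool. Recall in particular that a torsion line bundle on a curve determines an unramified cyclic covering trivializing it, and that $\Nm_{W/D}\circ p^*$ is multiplication by $\deg p$ for an unramified covering $p\colon W\to D$.

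For the ``if'' direction, suppose $R\in J_m(D)$. Then $R$ determines an unramified cyclic covering $\rho\colon D'\to D$ with $\rho^*R=\Oo_{D'}$, whence also $\rho^*R^{-1}=\Oo_{D'}$. The composition $\pi\circ\rho\colon D'\to C$ is again an unramified finite covering, and
\[
(\pi\circ\rho)^*E=\rho^*(R^{-1}\oplus R)=\Oo_{D'}\oplus\Oo_{D'},
\]
so $E$ is \'etale-trivial. This direction is immediate.

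For the ``only if'' direction, suppose $\eta^*E=\Oo_{C'}\oplus\Oo_{C'}$ for an unramified finite covering $\eta\colon C'\to C$. I would pass to a covering dominating both $D$ and $C'$ by forming the fiber product $D\times_C C'$, which is \'etale over $C$ since $\pi$ and $\eta$ are, and letting $W$ be a connected component. The two projections restrict to finite \'etale surjective morphisms $p\colon W\to D$ and $q\colon W\to C'$ with $\pi\circ p=\eta\circ q$; here one checks that $W$, being open and closed in $D\times_C C'$, is finite over $D$, and that $p$ is surjective because $\pi\circ p$ dominates $C$. Pulling $E$ back in the two ways along this common covering yields
\[
p^*R^{-1}\oplus p^*R=p^*\pi^*E=q^*\eta^*E=\Oo_W\oplus\Oo_W.
\]
Since $\pi^*E$ is semistable of degree $0$, the summand $R$ has degree $0$, so $p^*R\in J^0(W)$; as a direct summand of $\Oo_W^{\oplus 2}$ it is globally generated, hence has a nonzero section, and a degree-$0$ line bundle on the smooth projective curve $W$ with a nonzero section is trivial, so $p^*R=\Oo_W$. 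Applying the Norm map gives $R^{\deg p}=\Nm_{W/D}(p^*R)=\Nm_{W/D}(\Oo_W)=\Oo_D$, that is, $R\in J_{\deg p}(D)$.

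The main obstacle is the set-up in the ``only if'' direction: producing a single unramified covering of $C$ through which both $D\to C$ and $C'\to C$ factor, i.e.\ verifying that a connected component $W$ of the fiber product is a genuine unramified finite covering of $D$ (and of $C'$) and that the two pullbacks of $E$ agree over it. Once this common refinement is in place, the triviality of $p^*R$ and the Norm-map computation are routine given the material of Section~2.1.
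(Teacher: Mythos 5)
Your proof is correct and takes essentially the same route as the paper's: the "if" direction by composing $\pi$ with the cyclic covering trivializing $R$, and the "only if" direction by forming the fiber product $D\times_C C'$, comparing the two pullbacks of $E$, and deducing that $R$ pulls back trivially, hence is torsion. The only differences are cosmetic — you pass to a connected component and conclude via the Norm map ($R^{\deg p}=\Nm_{W/D}(p^*R)=\Oo_D$), while the paper works with the full fiber product and cites \cite[Proposition 11.4.3]{Birkenhake-Lange04} for the torsion conclusion; if anything, your handling of connectedness and of the triviality of the summand $p^*R$ is the more careful of the two.
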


\begin{proof}
Recall that we assumed $\det E=\Oo_C$.
So if $\pi^*E$ splits, then it must     be of the form $\pi^*E=R^{-1}\oplus R$.
If $E$ is \'{e}tale-trivial, then there exists an unramified finite covering $\eta: C'\to C$ over which $E$ is trivialized.
Let $D'=D\times_C C'$ be the covering of $C$ which gives the following commutative diagram.
\vspace{-0.5em}
\[
\xymatrix @R=2pc{
D' \ar[r]^{\eta'} \ar[d]_{\pi'} &
D \ar[d]^\pi \\
C' \ar[r]^\eta &
C
}\]
From
\[
\eta'^*(R^{-1}\oplus R)
=(\pi\circ\eta')^*E
=(\eta\circ\pi')^*E
=\pi'^*(\Oo_{C'}\oplus \Oo_{C'})=\Oo_{D'}\oplus \Oo_{D'},
\]
we can observe that $\eta'^* R=\Oo_{D'}$.
It is possible only if $R$ is a torsion line bundle \cite[Proposition 11.4.3]{Birkenhake-Lange04}.

The converse holds by composing $\pi$ with a cyclic covering over which $R$ is trivialized.
\end{proof}

If $E$ is stable but $S^2 E$ is not stable, then, by Proposition \ref{dim}, there exists a nontrivial unramified double covering $\pi: B\to C$ and $E=\pi_*R$ for some $R\in J^0(B)$.
If $R\neq \iota^*R$, then $\pi^*E$ splits as $\pi^*E=\iota^*R\oplus R$
because $\pi^*E$ is invariant under the involution $\iota: B\to B$ induced by $\pi$.
Otherwise, if $R=\iota^*R$, then $R\in \pi^*J^0(C)$, so $E$ already splits on $C$ by Proposition \ref{img}.
Hence $\pi^*E$ splits over an unramified double covering $\pi:B\to C$ whenever $S^2 E$ is not stable.
Since $\det E=\Oo_C$, it must be of the form $\pi^*E=R^{-1}\oplus R$.
Therefore, we have the following observation applying the previous lemma.

\begin{Rmk}
If $E$ is stable but $S^2 E$ is not stable, then $E$ is \'{e}tale-trivial if and only if $E=\pi_*R$ for some nontrivial unramified double covering $\pi: B\to C$ and $R\in J_m(B)$ for some $m>0$.
\end{Rmk}

\begin{Lem}\label{k-2_twist}
Let $k\geq 3$. Assume that $S^k E$ is destabilized by a line subbundle $L^{-1}\to S^k E$ but $S^m E$ is not destabilized by a line subbundle for any $m< k$.
Then $S^{k-2} E\otimes L^{-1}\cong S^{k-2} E\otimes L$.
\end{Lem}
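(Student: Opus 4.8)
The plan is to realize the destabilizing line subbundle geometrically and then read off the desired isomorphism from the self-dual structure of the pushforward $\pi_*\Oo_D$. First I would use the correspondence of Section 2.3 to replace $L^{-1}\to S^kE$ by a $k$-section $D\sim kC_1+\bb f$ on $X=\PP_C(E)$, where $L=\Oo_C(\bb)$ with $\deg\bb=0$ and $D^2=0$. Since $S^mE$ is not destabilized by a line subbundle for any $m<k$, Lemma \ref{integral} shows that $D$ is irreducible and reduced, whence $D$ is smooth and $\pi=\Pi|_D:D\to C$ is unramified by Remark \ref{interesting}.

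The main computation is to push forward the structure sequence $0\to\Oo_X(-kC_1-\bb f)\to\Oo_X\to\Oo_D\to 0$. Here $\Pi_*\Oo_X=\Oo_C$ and $\Pi_*\Oo_X(-kC_1-\bb f)=0$, while the formula for $R^1\Pi_*$ recorded in Section 2.3, together with $\det E=\Oo_C$ and the self-duality $(S^{k-2}E)^\vee\cong S^{k-2}E$, give $R^1\Pi_*\Oo_X(-kC_1-\bb f)\cong S^{k-2}E\otimes L^{-1}$ and $R^1\Pi_*\Oo_X=0$. The long exact sequence then collapses to $0\to\Oo_C\to\pi_*\Oo_D\to S^{k-2}E\otimes L^{-1}\to 0$. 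Because $\pi$ is finite of degree $k$ in characteristic $0$, the structure map $\Oo_C\to\pi_*\Oo_D$ is split by $\tfrac{1}{k}$ times the trace \cite[I:~p.~248]{Lazarsfeld04}, so $\pi_*\Oo_D\cong\Oo_C\oplus(S^{k-2}E\otimes L^{-1})$.

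Next I would exploit that $\pi$ is unramified, so $\omega_{D/C}=\Oo_D$ and relative duality yields the self-duality $(\pi_*\Oo_D)^\vee\cong\pi_*\omega_{D/C}=\pi_*\Oo_D$. Dualizing the splitting and again using $(S^{k-2}E)^\vee\cong S^{k-2}E$ turns it into $\pi_*\Oo_D\cong\Oo_C\oplus(S^{k-2}E\otimes L)$; alternatively this second description follows directly from pushing forward $0\to\Oo_X(-2C_1-\bb f)\to\Oo_X((k-2)C_1)\to\Oo_D((k-2)C_1)\to 0$ and identifying $\Oo_D((k-2)C_1)=\pi^*L^{-1}$ by Proposition \ref{k-2}. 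Comparing the two descriptions gives $\Oo_C\oplus(S^{k-2}E\otimes L^{-1})\cong\Oo_C\oplus(S^{k-2}E\otimes L)$, and cancelling the common summand $\Oo_C$ yields $S^{k-2}E\otimes L^{-1}\cong S^{k-2}E\otimes L$.

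The main obstacle is precisely this final cancellation: in general $A\oplus\Oo_C\cong B\oplus\Oo_C$ does not force $A\cong B$, and here $S^{k-2}E$ need not be stable, since the hypothesis only excludes destabilizing \emph{line} subbundles for $m<k$ and permits higher-rank destabilizers of $S^{k-2}E$. Thus I cannot argue from indecomposability of the two factors alone. The resolution is to invoke the Krull--Schmidt property of the category of vector bundles on the projective curve $C$: the endomorphism algebras are finite dimensional, so every bundle decomposes uniquely into indecomposables and cancellation of the common summand $\Oo_C$ is legitimate. A secondary point to handle carefully is the bookkeeping in the $R^1\Pi_*$ computation and the justification that the structure extension splits, both of which follow the pattern already used for $\xi_*\Oo_B$ in the proof of Proposition \ref{last}.
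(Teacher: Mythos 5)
Your proof is correct and follows essentially the same route as the paper's: realize $L^{-1}\to S^kE$ as a $k$-section $D$, use Lemma \ref{integral} and Remark \ref{interesting} to get $D$ irreducible, smooth, and unramified over $C$, push forward the structure sequence, split off $\Oo_C$ via the trace, and compare two decompositions of $\pi_*\Oo_D$. The only differences are minor: the paper obtains the second decomposition $\pi_*\Oo_D\cong\Oo_C\oplus(S^{k-2}E\otimes L)$ by pushing forward a second sequence and applying Proposition \ref{k-2} (precisely the alternative you mention) rather than by relative duality, and your explicit appeal to the Krull--Schmidt property of vector bundles on $C$ for the final cancellation makes rigorous a step the paper passes over with a bare ``hence.''
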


\begin{proof}
Let $D\sim kC_1+\bb f$ be the $k$-section on $X$ corresponding to the line subbundle $L^{-1}\to S^k E$.
Then $D$ is irreducible and reduced by Lemma \ref{integral}.
By pushing forward the exact sequences
\begin{gather*}
0\to \Oo_X(-kC_1-\bb f)\to \Oo_X\to \Oo_D\to 0,\\
0\to \Oo_X(-2C_1)\to \Oo_X((k-2)C_1+\bb f)\to \Oo_D((k-2)C_1+\bb f)\to 0
\end{gather*}
on $X$ to $C$ and using $\Oo_D((k-2)C_1+\bb f)=\Oo_D$ from Proposition \ref{k-2}, we get the following exact sequences.
\begin{gather*}
0\to \Oo_C\to \pi_*\Oo_D\to S^{k-2}E\otimes L^{-1}\to 0\\
0\to S^{k-2}E\otimes L\to \pi_*\Oo_D\to \Oo_C\to 0
\end{gather*}
Since $D$ is smooth by Remark \ref{interesting}, $\pi_*\Oo_D$ has a splitting of the injection $\Oo_C\to\pi_*\Oo_D$ \cite[I:~p.~248]{Lazarsfeld04}, which indeed follows from the hypothesis on $S^m E$ for $m<k$.
Therefore, $\Oo_C\oplus (S^ {k-2} E\otimes L^{-1})\cong \pi_*\Oo_D\cong \Oo_C\oplus (S^ {k-2} E\otimes L)$, and hence $S^{k-2} E\otimes L^{-1}\cong S^{k-2} E\otimes L$.
\end{proof}

\begin{Thm}\label{concludingconcluding}
Let $k\geq 3$.
If there is a $k$-section $\pi: D\to C$ of zero self-intersection on $X=\PP_C(E)$ and no such $m$-section for any $m<k$, then $\pi^*E=R^{-1}\oplus R$ for some $R\in J_{2(k-1)(k-2)}(D)$.
Moreover, if $S^2 E$ is stable, then $E$ is \'{e}tale-trivial if and only if $S^k E$ is destabilized by a line subbundle for some $k\geq 3$.
\end{Thm}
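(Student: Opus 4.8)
The plan is to prove the splitting statement first and then deduce the biconditional from it. For the splitting, I start from the hypotheses exactly as in Proposition~\ref{concluding}: by Lemma~\ref{integral} and Remark~\ref{interesting} the $k$-section $D$ is smooth, irreducible and reduced, and $\pi=\Pi|_D\colon D\to C$ is an unramified $k$-covering. The inclusion $\phi\colon D\hookrightarrow X$ yields the tautological quotient $\pi^*E=\phi^*\Pi^*E\twoheadrightarrow \phi^*\Oo_X(C_1)=\Oo_D(C_1)=:R$, of degree $D.C_1=0$, and since $\det\pi^*E=\pi^*\det E=\Oo_D$, the kernel is forced to be $R^{-1}$, giving
\[
0\to R^{-1}\to \pi^*E\to R\to 0 .
\]
That $R\in J_{2(k-1)(k-2)}(D)$ is precisely Proposition~\ref{concluding}. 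It therefore remains only to show that this extension splits.

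This splitting is the main obstacle: an arbitrary extension of $R$ by $R^{-1}$ need not split, its class lying in $H^1(D,R^{-2})$, which is positive-dimensional once $R^2\neq\Oo_D$. I would resolve it by using that $\pi^*E$ is not merely semi-stable but \emph{polystable}. Indeed, $E$ is stable of degree $0$, hence corresponds under Narasimhan--Seshadri to an irreducible unitary representation of $\pi_1(C)$; its restriction to the finite-index subgroup $\pi_1(D)$ is again unitary, hence completely reducible, so $\pi^*E$ is a direct sum of stable bundles of slope $0$. (Algebraically, one may instead pass to the Galois closure $\widetilde D\to C$: the socle of the pullback is canonical, hence Galois-invariant, so it descends to a subsheaf of $E$ of slope $0$, which by stability of $E$ must be all of $E$; thus the pullback to $\widetilde D$ is polystable, and since $H^1$ of line bundles injects under the \'etale map $\widetilde D\to D$, a non-split extension on $D$ would stay non-split on $\widetilde D$, a contradiction.) A polystable rank-$2$ bundle of degree $0$ containing a saturated degree-$0$ line subbundle $R^{-1}$ splits off that subbundle, and the complementary line bundle, having degree $0$, equals $R$ by comparison of determinants. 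Hence $\pi^*E=R^{-1}\oplus R$, which completes the first assertion.

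For the biconditional I may assume $S^2E$ is stable. The forward implication was already observed before Lemma~\ref{pull-back}: an \'etale trivialisation $\eta\colon C'\to C$ of $E$ produces a morphism $C'\to X$ over $C$ whose image is a $k$-section of zero self-intersection, so that some $S^kE$ is destabilised by a line subbundle. For the converse, suppose $S^kE$ is destabilised by a line subbundle for some $k\geq 3$ and let $k$ be minimal with this property. Since $S^2E$ is stable, neither $E=S^1E$ nor $S^2E$ is destabilised by a line subbundle—such a subbundle would contradict the stability of $S^2E$, and hence of $E$—so this minimal $k$ is indeed $\geq 3$ and, via Remark~\ref{interesting}, no $m$-section of zero self-intersection exists for $m<k$. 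The first part then applies verbatim and produces $\pi^*E=R^{-1}\oplus R$ with $R\in J_{2(k-1)(k-2)}(D)$ a torsion line bundle over the unramified covering $\pi\colon D\to C$. Finally, Lemma~\ref{pull-back}, applied to the torsion bundle $R$, gives that $E$ is \'etale-trivial, which is exactly what was to be shown.
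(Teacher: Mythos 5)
Your proposal is correct, but the heart of it --- the splitting of $0\to R^{-1}\to\pi^*E\to R\to 0$ --- is established by a genuinely different method from the paper's. The setup coincides: Lemma \ref{integral}, Remark \ref{interesting} and Proposition \ref{concluding} give the smooth irreducible $D$, the unramified covering $\pi$, the surjection $\pi^*E\to R=\Oo_D(C_1)$, and the torsion order $2(k-1)(k-2)$. The paper then splits the extension by producing a \emph{second} degree-zero quotient: it first rules out $R^2=\Oo_D$ by a contradiction argument involving a $(k+2)$-section, and then proves $\Hom(E,\pi_*R^{-1})=\Hom(E,S^{k-1}E\otimes L^{-1})\neq 0$ in three separate cases ($k=3$ using $E\cong E\otimes L^2$, $k=4$ using Proposition \ref{last}, and $k\geq 5$ via the implication diagram resting on \eqref{basic} and Lemma \ref{k-2_twist}); two non-isomorphic degree-zero quotients of the semistable bundle $\pi^*E$ then force the splitting. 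You bypass all of this by observing that $\pi^*E$ is \emph{polystable}: the Narasimhan--Seshadri representation of the stable degree-$0$ bundle $E$ restricts to a unitary, hence completely reducible, representation of the finite-index subgroup $\pi_1(D)$; since $\pi^*E$ carries the degree-$0$ subbundle $R^{-1}$ it cannot be stable, so it is a direct sum of two degree-$0$ line bundles, and a saturated degree-$0$ line subbundle of such a sum always splits off, with complement $R$ by comparing determinants. This is sound, and your algebraic fallback (Galois closure, descent of the socle using the stability of $E$, and injectivity of $H^1$ under \'{e}tale pullback via the trace splitting of $\Oo_D\to\mu_*\Oo_{\widetilde{D}}$) is equally valid. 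The trade-off: your argument is uniform in $k$ and renders Lemma \ref{k-2_twist}, the $(k+2)$-section argument, and the entire case analysis unnecessary, but it imports the Narasimhan--Seshadri correspondence (or Galois-descent machinery), which the paper's Section 4 otherwise avoids; the paper's longer route stays inside its elementary exact-sequence toolkit and yields along the way the extra facts $R\neq R^{-1}$ and $\pi_*R^{-1}\cong S^{k-1}E\otimes L^{-1}$, which your proof does not recover (they are not needed for the statement as given). The ``moreover'' part you handle exactly as the paper does, via Lemma \ref{pull-back}; the only point worth making explicit in the forward direction is that the $k$-section produced by an \'{e}tale trivialization necessarily has $k\geq 3$, precisely because $E$ and $S^2E$ are stable.
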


\begin{proof}
Let $L^{-1}\to S^k E$ be the destabilizing line subbundle corresponding to the $k$-section $D\sim kC_1+\bb f$ on $X$ for $L=\Oo_C(\bb)$.
Then $D$ is irreducible and reduced by Lemma \ref{integral}, and so $\pi:D\to C$ is unramified by Remark \ref{interesting}.
Let $R=\Oo_D(C_1)$.
We observe in Proposition \ref{concluding} that $R$ is a torsion line bundle and there exists a surjection $\pi^*E\to R$.
In order to conclude that $\pi^*E=R^{-1}\oplus R$, it remains to show that there exists another surjection $\pi^*E\to R^{-1}$ and $R^{-1}\neq R$.

We first claim that $R^{-1}\neq R$.
Suppose $\Oo_D(2C_1)=R^2=\Oo_D$.
By pushing forward the exact sequence
\[
0\to \Oo_X(-(k+4)C_1-\bb f)\to \Oo_X(-4C_1)\to \Oo_D(-4C_1)\to 0
\]
on $X$ to $C$, we have an injection $\pi_*\Oo_D=\pi_*\Oo_D(-4C_1)\to R^1\pi_*\Oo_X(-(k+4)C_1-\bb f)= S^{k+2} E\otimes L^{-1}$.
Since $H^0(\pi_*\Oo_D)\neq 0$ and  $H^0(\pi_*\Oo_D)\subseteq H^0(S^{k+2}E\otimes L^{-1})$, we get $H^0(S^{k+2}E\otimes L^{-1})\neq 0$.
So there exists a $(k+2)$-section $B\sim (k+2)C_1-\bb f$, and we have the following exact sequence on $X$.
\[
0\to \Oo_X(-(k+2)C_1+\bb f)\to \Oo_X\to \Oo_B\to 0
\]
By pushing forward the sequence to $C$, we obtain the isomorphism $\pi_*\Oo_B\cong \Oo_C\oplus (S^k E\otimes L)$ thanks to \cite[I:~p.~248]{Lazarsfeld04} if we suppose that $B$ is irreducible and reduced so that $B$ is smooth by Remark \ref{interesting}.
But
\[
h^0(\pi_*\Oo_B)=h^0(\Oo_C)+ h^0(S^k E\otimes L)\geq 2
\]
cannot hold if $B$ is irreducible and reduced, hence $B$ must be neither irreducible nor reduced.
So there exists an $m$-section of zero self-intersection on $X$ with $m\leq \frac{k+2}{2}$ due to Lemma \ref{integral}, and it contradicts the minimality of $k$.
Therefore, $R^2\neq \Oo_C$.

We next prove the existence of a surjection $\pi^*E\to R^{-1}$.
Because both $\pi^*E$ and $R^{-1}$ have degree $0$ and $\pi^*E$ is semi-stable, it suffices to find a nonzero morphism $\pi^*E\to R^{-1}$.
By the adjoint property, it is equivalent to show that $\Hom(E,\pi_*R^{-1})
\neq 0$.
By pushing forward the exact sequence
\[
0\to \Oo_X(-(k+1)C_1-\bb f)\to \Oo_X(-C_1)\to \Oo_D(-C_1)\to 0
\]
on $X$ to $C$, we can see that 
\[
\pi_*R^{-1}
\cong R^1\pi_*\Oo_X(-(k+1)C_1-\bb f)
\cong S^{k-1}E\otimes L^{-1}.
\]
Thus $\Hom(E,\pi_*R^{-1})\neq 0$ follows once we prove that $\Hom(E,S^{k-1}E\otimes L^{-1})\neq 0$.

For $k=3$, if $S^3 E$ is destabilized by a line subbundle $L^{-1}\to S^3 E$, then it induces a destabilizing subbundle $E\otimes L^{-1}\to S^2 E$ with the quotient bundle $S^2 E\to L^2$, so there is an isomorphism $E\cong E\otimes L^2$.
Hence we have $\Hom(E,S^2E\otimes L^{-1})=\Hom(E\otimes L^2,S^2E\otimes L^{-1})=\Hom(E\otimes L^{-1},S^2 E)\neq 0$ where the second equality is obtained from twisting by $L^{-3}$ and using the fact $L^2\in J_2(C)$ (see Proposition \ref{S3}).

For $k=4$, if $S^4 E$ has a destabilizing subbundle $L^{-1}\to S^4 E$, then it gives a subbundle $E\otimes L^{-1}\to S^3 E$ which destabilizes $S^3 E$, and we know from Proposition \ref{last} that there is also a subbundle $E\otimes L\to S^3 E$.
That is, $\Hom(E,S^3 E\otimes L^{-1})\neq 0$.

For the remaining $k\geq 5$, we start with the three exact sequences
\begin{gather*}
0\to S^{k-2}E\otimes A
\to E\otimes S^{k-1}E\otimes A
\to S^k E\otimes A\to 0, \\
0\to E\otimes S^{k-3}E\otimes A
\to E\otimes E\otimes S^{k-2}E\otimes A
\to S^{k-1} E\otimes A\to 0, \\
0\to S^{k-4}E \otimes A
\to E\otimes S^{k-3} E \otimes A
\to S^{k-2} E\otimes A\to 0
\end{gather*}
from \eqref{basic} being twisted by $A$ and $E\otimes A$ for a line bundle $A$ on $C$.
Thus we have
\begin{align*}
H^0(E\otimes S^{k-1} E\otimes A)\neq 0
\ &\Leftrightarrow\ 
H^0(S^k E\otimes A)\neq 0\ \text{or}\ H^0(S^{k-2}E\otimes A)\neq 0, \\
H^0(E\otimes E\otimes S^{k-2} E\otimes A)\neq 0
\ &\Leftrightarrow\ 
H^0(E\otimes S^{k-1} E\otimes A)\neq 0\ \text{or}\ H^0(E\otimes S^{k-3}E\otimes A)\neq 0, \\
H^0(E\otimes S^{k-3} E\otimes A)\neq 0
\ &\Leftrightarrow\ 
H^0(S^{k-2} E\otimes A)\neq 0\ \text{or}\ H^0(S^{k-4}E\otimes A)\neq 0.
\end{align*}
Then $H^0(S^k E\otimes L^{+1})=\Hom(L^{-1},S^k E)\neq 0$ implies $\Hom(E\otimes L,S^{k-1}E)=H^0(E\otimes S^{k-1}E\otimes L^{-1})\neq 0$ from the following implication diagram whose equivalence at the middle is established by Lemma~\ref{k-2_twist}.
An arrow labeled with $\times$ indicates an implication which leads to a contradiction to the assumption that there is no $m$-section of zero self-intersection on $X$ for any $m<k$.
\[
\xymatrix @C=-3.5pc @R=0pc{
H^0(S^kE\otimes L^{+1})\neq 0 \ar@{=>}[rdd] & &
H^0(S^{k-2}E\otimes L^{+1})\neq 0 & &
H^0(S^{k-4}E\otimes L^{+1})\neq 0\\ \\
& H^0(E\otimes S^{k-1} E\otimes L^{+1})\neq 0 \ar@{=>}[rdd] \ar[ruu]|\times & &
H^0(E\otimes S^{k-3} E\otimes L^{+1})\neq 0 \ar[luu]|\times \ar[ruu]|\times & \\ \\
& & H^0(E\otimes E\otimes S^{k-2}E\otimes L^{+1})\neq 0 \ar@{<=>}[ddd] \ar[ruu]|\times & & \\ \\ \\
& & H^0(E\otimes E\otimes S^{k-2}E\otimes L^{-1})\neq 0 \ar@{=>}[ldd] \ar[rdd]|\times & & \\ \\
& H^0(E\otimes S^{k-1}E\otimes L^{-1})\neq 0 \ar@{=>}[ldd] \ar[rdd]|\times & &
H^0(E\otimes S^{k-3}E\otimes L^{-1})\neq 0 \ar[ldd]|\times \ar[rdd]|\times \\ \\
H^0(S^kE\otimes L^{-1})\neq 0 & &
H^0(S^{k-2}E\otimes L^{-1})\neq 0 & &
H^0(S^{k-4}E\otimes L^{-1})\neq 0
}\]

Thus, for $k\geq 3$, $\pi^*E=R^{-1}\oplus R$ for some torsion line bundle $R$ over the unramified $k$-covering $\pi$.
Finally, we complete the proof applying Lemma \ref{pull-back}.
\end{proof}

\begin{Rmk}\label{etale}
Nori \cite[p.~35]{Nori76} defines a vector bundle $V$ on a complete, connected, reduced scheme $X$ to be \emph{finite} if there exists a finite collection $\mathcal{S}$ of vector bundles on $X$ such that for each $n\geq 0$,
\[
V^{\otimes n}=W_1\oplus W_2\oplus \cdots \oplus W_{k_n}\ \ 
\text{for some $W_i\in \mathcal{S}$}.
\]
It is also proven in the same paper that $V$ is finite if and only if $V$ is \emph{\'{e}tale-trivial} (over the base field of characteristic $0$).
Therefore, in the case where $X$ is a smooth projective curve $C$ of genus $g\geq 2$ and\linebreak $V$ is a vector bundle $E$ of rank $2$ with trivial determinant, Theorem \ref{concludingconcluding} says that when $S^2 E$ is stable, $E$ is finite if and only if $S^k E$ is destabilized by a line subbundle for some $k\geq 3$.

If $S^k E$ is not stable for some $k\geq 3$ which may assumed to be minimal, then $k=3$ or $4$ or $6$ due to Theorem~\ref{higher}.
Moreover, in those cases, from Proposition \ref{last} (with Proposition \ref{tosmall}) for $k=3$, and from the proof of Theorem \ref{main} for $k=4$ and $6$, we know that $S^l E$ is destabilized by a line subbundle for some $l\geq k$, and hence $E$ becomes finite by Theorem \ref{concludingconcluding}.
As a conclusion, when $S^2 E$ is stable, we can state that $E$ is finite if and only if $S^k E$ is not stable for some $k\geq 3$.
\end{Rmk}

\end{document}